\documentclass[a4paper,12pt]{amsart}

\usepackage[margin=1.05in]{geometry}
\usepackage[hyperfootnotes=false]{hyperref}
\usepackage{amsmath,mathtools}
\usepackage{enumerate}
\usepackage{tikz}
\usepackage[all]{xy}
\usepackage{verbatim}
\usepackage{setspace}
\usepackage{enumitem}
\usepackage{mathdots}
\usepackage{xcolor}

\xyoption{rotate}

\newcommand\blfootnote[1]{
  \begingroup
  \renewcommand\thefootnote{}\footnote{#1}
  \addtocounter{footnote}{-1}
  \endgroup
}

\DeclareMathOperator{\tp}{top}
\DeclareMathOperator{\rad}{rad}
\DeclareMathOperator{\soc}{soc}

\DeclareMathOperator{\Mod*}{mod}

\DeclareMathOperator{\val}{val}

\theoremstyle{plain}
\newtheorem{thm}{Theorem}[section]
\newtheorem*{thm*}{Theorem}
\newtheorem{lem}[thm]{Lemma}
\newtheorem{cor}[thm]{Corollary}
\newtheorem*{cor*}{Corollary}
\newtheorem{prop}[thm]{Proposition}
\newtheorem*{prop*}{Proposition}

\theoremstyle{definition}
\newtheorem{defn}[thm]{Definition}
\newtheorem{defns}[thm]{Definitions}
\newtheorem{exam}[thm]{Example}
\newtheorem*{exam*}{Example}

\theoremstyle{remark}
\newtheorem{rem}[thm]{Remark}

\numberwithin{equation}{section}

\begin{document}
	\title{Auslander-Reiten Components of Symmetric Special Biserial Algebras}
	\author{Drew Duffield}
	\maketitle
	\blfootnote{\copyright2018. This manuscript version is made available under the CC-BY-NC-ND 4.0 license \href{http://creativecommons.org/licenses/by-nc-nd/4.0/}{http://creativecommons.org/licenses/by-nc-nd/4.0/}}
	
	\begin{abstract}
		We provide a combinatorial algorithm for constructing the stable Auslander-Reiten component containing a given indecomposable module of a symmetric special biserial algebra using only information from its underlying Brauer graph. We also show that the structure of the Auslander-Reiten quiver is closely related to the distinct Green walks of the Brauer graph and detail the relationship between the precise shape of the stable Auslander-Reiten components for domestic Brauer graph algebras and their underlying graph. Furthermore, we show that the specific component containing a given simple or indecomposable projective module for any Brauer graph algebra is determined by the edge in the Brauer graph associated to the module.
	\end{abstract}
	
	% ================================================================
	% INTRODUCTION
	% ================================================================
	\section{Introduction}
		Within the study of the representation theory of finite dimensional algebras, one of the primary aims is to understand the indecomposable modules of the algebra along with the morphisms between them. The Auslander-Reiten quiver of an algebra is a means of presenting this information.

	The topic of this paper is on the Auslander-Reiten quiver of a class of algebras known as Brauer graph algebras, which coincides with the class of symmetric special biserial algebras (\cite{Roggenkamp}, \cite{trivialGentle}). They are finite dimensional algebras constructed from a decorated ribbon graph. The study of these algebras originates from the work of Richard Brauer on the modular representation theory of finite groups. Brauer graph algebras have since been studied extensively by various authors (see for example in \cite{SchrollGroup}, \cite{Marsh}, \cite{Rickard}, \cite{Roggenkamp}).

	It is known that Brauer graph algebras are of tame representation type (\cite{Alperin}, \cite{Benson}, \cite{WaldWasch}) and those of finite representation type are precisely the Brauer tree algebras. The underlying Brauer graph of a domestic symmetric special biserial algebra has been described in \cite{bocianSkow}. It is also known that the indecomposable non-projective modules of a Brauer graph algebra are given by either string modules or band modules (\cite{WaldWasch}). The irreducible morphisms between indecomposable modules are then given by adding or deleting hooks and cohooks to strings (\cite{butlerRingel}, \cite{Biserial}).

	One of the interesting properties of Brauer graph algebras is that it is possible to read off some of the representation theory of the algebra from it's underlying Brauer graph. For example, a useful tool in representation theory is the projective resolution of a module. However, projective resolutions in algebras are difficult to calculate in general. For Brauer graph algebras, one can avoid such calculations and easily read off the projective resolutions of certain modules from the Brauer graph. These are given by the Green walks of the Brauer graph, which were first described in detail in \cite{GreenWalk} for Brauer trees and are shown to hold more generally in \cite{Roggenkamp}.

	The aim of this paper is to study what information about the Auslander-Reiten quiver of a Brauer graph algebra we can read off from its underlying Brauer graph. There has already been extensive work on the Auslander-Reiten quiver of Brauer tree algebras. For example, a complete description of the Auslander-Reiten quiver of Brauer tree algebras, has been given in \cite{GroupsWOGroups}. In \cite{Chinburg}, the location of the modules in the stable Auslander-Reiten quiver of a Brauer tree algebra has been described in terms of walks in the Brauer tree. However, the descriptions in both \cite{Chinburg} and \cite{GroupsWOGroups} do not address the case where the algebra is of infinite representation type, and thus, is associated to a Brauer graph that is not a Brauer tree.
	
	In Section~\ref{Algorithm}, we provide an algorithm for constructing the stable Auslander-Reiten component of a given string module of a Brauer graph algebra using only information from its underlying Brauer graph. This algorithm is of particular importance because it allows us to describe the string combinatorics of the algebra in terms of the Brauer graph. This algorithm thus allows us to prove several results later in the paper, which relate the number and shape of the components of the Auslander-Reiten quiver of the algebra to its underling Brauer graph.

	In \cite{ErdmannAR}, the Auslander-Reiten components of self-injective special biserial algebras have been described. In particular, any Brauer graph algebra has a finite number of exceptional tubes in the stable Auslander-Reiten quiver. In Section~\ref{Components}, we show that the rank and the total number of these tubes is closely related to the distinct Green walks of the Brauer graph. Specifically, we prove the following.

	\begin{thm}
		Let $A$ be a representation-infinite Brauer graph algebra with Brauer graph $G$ and let $_s\Gamma_A$ be its stable Auslander-Reiten quiver. Then
	 	\begin{enumerate}[label=(\alph*)]
	 		\item there is a bijective correspondence between the exceptional tubes in $_s\Gamma_A$ and the distinct double-stepped Green walks of $G$; and
	 		\item the rank of an exceptional tube is given by the length of the corresponding double-stepped Green walk of $G$.
	 	\end{enumerate}
	\end{thm}
	
	It is shown in \cite{ErdmannAR} that the Auslander-Reiten components of Brauer graph algebras are strongly related to the growth type of the algebra. The algebra, for example, contains a Euclidean component of the form $\mathbb{Z}\tilde{A}_{p,q}$ if and only if the algebra is domestic and of infinite representation type. We use a simple application of the above theorem to show the following results regarding the values of $p$ and $q$.
	\begin{thm}
		Let $A$ be a Brauer graph algebra constructed from a graph $G$ of $n$ edges and suppose $_s\Gamma_A$ has a $\mathbb{Z}\tilde{A}_{p,q}$ component.
		\begin{enumerate}[label=(\alph*)]
			\item If $A$ is 1-domestic, then $p+q=2n$.
			\item If $A$ is 2-domestic, then $p+q=n$.
		\end{enumerate}
		Furthermore, if $G$ is a tree, then $p=q=n$.
	\end{thm}
	
	\begin{cor} \label{TubeType}
		 For a domestic Brauer graph algebra, if the Brauer graph contains a unique (simple) cycle of length $l$ and there are $n_1$ additional edges on the inside of the cycle and $n_2$ additional edges along the outside, then the $\mathbb{Z}\tilde{A}_{p,q}$ components are given by
		\begin{equation*}
			p=
			\begin{cases}
				l+2n_1 & l \text{ odd,} \\
				\frac{l}{2}+n_1 & l \text{ even,}
			\end{cases}
			\quad \text{and} \quad
			q = 
			\begin{cases}
				l+2n_2 & l \text{ odd,} \\
				\frac{l}{2}+n_2 & l \text{ even.}
			\end{cases}
		\end{equation*}
	\end{cor}
	
	In the later parts of Section~\ref{Components}, we use the algorithm presented in Section~\ref{Algorithm} to prove results which show how one can determine the specific component containing a given simple or indecomposable projective module from its associated edge in the Brauer graph $G$.
	
	We first divide the edges of $G$ into two distinct classes, which we call \emph{exceptional} and \emph{non-exceptional edges}. The exceptional edges of a Brauer graph belong to a special class of subtrees of the graph, which we refer to as the \emph{exceptional subtrees} of the Brauer graph. Intuitively, one can think of these subtrees as belonging to parts of the algebra that behave locally as a Brauer tree algebra. We then prove the following regarding the exceptional edges of a Brauer graph.
	\begin{thm}
		Let $A$ be a representation-infinite Brauer graph algebra associated to a graph $G$ and let $x$ be an edge in $G$. Then the simple module and the radical of the indecomposable projective module associated to $x$ belong to exceptional tubes of $_s\Gamma_A$ if and only if $x$ is an exceptional edge.
	\end{thm}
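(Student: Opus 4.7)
The plan is to translate both conditions of the biconditional into statements about Green walks on $G$ and then apply Theorem~1.1 above. Write $u$ and $v$ for the two endpoints of $x$. For a Brauer graph algebra the simple $S_x$ is the string module of the trivial string at $x$, while $\rad P_x$ is a string module whose string records the maximal sequences of edges one encounters by walking around $u$ and around $v$ respectively, starting from $x$. These are precisely the data that feed into the algorithm of Section~\ref{Algorithm}: successive $\tau$-translates of $S_x$ or $\rad P_x$ are computed by iterated hook/cohook operations at the two string endpoints, and each such operation implements one step of the Green walk based at $u$ or at $v$. Hence the AR-components of $S_x$ and of $\rad P_x$ are governed entirely by these two Green walks.

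For the $(\Leftarrow)$ direction, I would suppose $x$ lies in an exceptional subtree $T$ of $G$. A Green walk can leave $T$ only by traversing an edge that sits on a cycle of $G$, and $T$ by construction contains no such edge, so the Green walks based at $u$ and $v$ stay confined to $T$. In particular they are finite and hence their double-stepped refinements are finite closed walks in $T$. The hook/cohook chain produced by the algorithm is therefore eventually periodic of bounded period, which by Theorem~1.1(a) forces the AR-component of $S_x$ (and of each summand of $\rad P_x$) to be one of the exceptional tubes.

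For the $(\Rightarrow)$ direction I would argue the contrapositive. If $x$ is not exceptional then $x$ is either itself a cycle edge of $G$ or a tree edge lying on the unique path from a tree branch to the cyclic part of $G$; in either case at least one of the Green walks from $\{u,v\}$ must eventually traverse cycle edges and therefore cannot be supported on a tree. Running the algorithm, the hook/cohook chain at the corresponding endpoint does not close up into a finite double-stepped walk, so by Theorem~1.1(a) the component cannot be one of the finitely many exceptional tubes; by the classification of AR-components of self-injective special biserial algebras recalled before Theorem~1.1, this component must instead be of the form $\mathbb{Z}\tilde{A}_{p,q}$ (in the domestic case, cf.\ Theorem~1.2) or $\mathbb{Z} A_\infty^\infty$ (otherwise).

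The main obstacle I expect is making step one fully rigorous: namely pinning down, for each edge $x$, precisely which double-stepped Green walk on $G$ is matched with the AR-component of $S_x$ (respectively of each summand of $\rad P_x$) under the bijection of Theorem~1.1, and then verifying that this walk has bounded support in $G$ exactly when $x$ lies in an exceptional subtree. This is purely combinatorial once the algorithm of Section~\ref{Algorithm} is available, but it will likely require an auxiliary lemma making the correspondence $S_x \leftrightarrow$ (walk starting at $x$) explicit and checking that the subtree structure of $G$ governs when such walks become periodic rather than running off into a cycle.
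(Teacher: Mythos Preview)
Your plan has a genuine gap rooted in a misreading of what Theorem~1.1 provides and of how Green walks behave.

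Theorem~1.1 (the bijection between exceptional tubes and double-stepped Green walks) identifies the tubes only via the modules sitting at their \emph{mouths}, namely the set $\mathcal{M}$ of maximal uniserial submodules and of simples with uniserial projective cover. It does not tell you whether an arbitrary module such as $S(x)$ or $\rad P(x)$ lies in a tube; that is exactly the question at hand, and it cannot be bootstrapped from the bijection alone.

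More seriously, your central combinatorial claim---that the Green walks based at $u$ and $v$ ``stay confined to $T$'' when $x$ lies in an exceptional subtree $T$---is false. A Green walk starting at a half-edge of $T$ will typically exit $T$ through the connecting vertex and traverse the rest of $G$. And \emph{every} Green walk on a finite Brauer graph is periodic, whether or not it touches exceptional edges; so ``the walk closes up'' cannot be the property distinguishing the two cases. Your $(\Rightarrow)$ argument has the same defect: the hook/cohook chain at a non-exceptional edge \emph{does} close up in the Green-walk sense, because all Green walks do. What changes is the behaviour of the \emph{strings} along the ray, not the periodicity of the walk their endpoints trace.

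The correct criterion, which the paper establishes, is whether a ray in $_s\Gamma_A$ issuing from the module terminates at a mouth module or is infinite. For $x$ exceptional, the paper uses Lemma~\ref{MaxExceptional} to exhibit an explicit ray from $S(x)$ (respectively $\rad P(x)$) landing on a module in $\mathcal{M}$, and then invokes Lemma~\ref{tubeMouth}. For $x$ non-exceptional, it uses Lemma~\ref{NonRay} (built on Lemma~\ref{PathWalk}) to show that the strings along every ray from $S(x)$ grow strictly in length as they wind repeatedly around a cycle of non-exceptional edges, so no ray can terminate. This is a statement about string \emph{lengths}, not Green-walk periodicity, and it is the mechanism your sketch is missing. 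Your closing remark about the ``main obstacle'' is on the right track, but the obstacle is not making a correspondence explicit---it is that the correspondence you posit (bounded support of a Green walk $\Leftrightarrow$ exceptional edge) does not hold.
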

	
	We also determine when a simple module and the radical of an indecomposable projective module belong to the same Auslander-Reiten component. For exceptional edges, we have the following.
	
	\begin{cor}
		Given an exceptional edge $x$ in a Brauer graph, the simple module and the radical of the indecomposable projective module associated to $x$ belong to the same exceptional tube if and only if we walk over both vertices connected to $x$ in the same double-stepped Green walk.
	\end{cor}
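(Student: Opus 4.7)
The plan is to translate the statement about Auslander--Reiten components into a statement about double-stepped Green walks via the theorem preceding the corollary together with the bijection between exceptional tubes and double-stepped walks, and then to use the algorithm of Section~\ref{Algorithm} to identify precisely which double-stepped walks correspond to the tubes containing $S_x$ and $\text{rad}(P_x)$.

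First I would invoke the theorem immediately above the corollary to conclude that, because $x$ is exceptional, both $S_x$ and $\text{rad}(P_x)$ lie in exceptional tubes; call these $T_s$ and $T_r$ respectively. By the bijection between exceptional tubes and distinct double-stepped Green walks, there are unique walks $W_s$ and $W_r$ attached to $T_s$ and $T_r$, and the corollary reduces to showing that $W_s=W_r$ if and only if a single double-stepped Green walk passes over both endpoints $v_1,v_2$ of $x$.

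Next I would use the algorithm of Section~\ref{Algorithm} to describe the rims of the two tubes. Since $A$ is symmetric, $\tau=\Omega^2$, so $\Omega$ commutes with $\tau$ and sends the rim of $T_s$ bijectively onto the rim of $T_r$; in particular $T_s=T_r$ exactly when $\Omega$ preserves $T_s$. The algorithm reads the $\tau$-action off successive flags of a double-stepped Green walk, and I expect to show from it that $W_s$ is the double-stepped Green walk through the flag at $(x,v_1)$ while $W_r$ is the double-stepped Green walk through the flag at $(x,v_2)$. Concretely this uses the string-combinatorial description of $S_x$ as the trivial string at $x$ and of $\text{rad}(P_x)=\Omega S_x$ as the string built by following one step of the Green walk around each vertex of $x$; the syzygy $\Omega$ interchanges the two uniserial halves of the biserial projective $P_x$, which is exactly why $W_s$ and $W_r$ correspond to the two opposite flags at $x$.

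Once this identification is in place the result is immediate: $T_s=T_r$ if and only if $W_s$ and $W_r$ are literally the same double-stepped Green walk, which in turn holds precisely when the two flags of $x$ lie in a common double-stepped walk, i.e.\ we walk over both $v_1$ and $v_2$ in the same double-stepped Green walk. I expect the main obstacle to be the middle step: verifying, from the bookkeeping of the algorithm, that the walk attached to the tube of $S_x$ is indeed the one through $(x,v_1)$ and that the walk attached to the tube of $\Omega S_x$ is the one through $(x,v_2)$. Care is needed here because the two flags of $x$ can lie in genuinely different $\sigma^2$-orbits (where $\sigma$ denotes the single-step Green walk on flags), and one must track which side of $x$ each rim module's string is constructed from; once this is settled the corollary follows formally.
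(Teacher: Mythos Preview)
Your approach is essentially the one the paper takes: reduce to identifying the rim $\tau$-orbits of $T_s$ and $T_r$ with double-stepped Green walks, and then show these correspond to the two half-edges $x^u$, $x^v$. The paper makes the ``main obstacle'' you flag concrete by pulling the specific mouth modules $M_1$, $M_2$ directly out of the proof of Theorem~\ref{EdgeTubes} (the maximal uniserial pieces $M(w'')$ and $M(w')$ of $\rad P(x)$), rather than rediscovering them via the algorithm.

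One point to watch: your claim that ``$W_s$ is the double-stepped walk through the flag $(x,v_1)$'' is not quite what falls out. The mouth module $M_1$ of $T_s$ does not correspond to $x^u$ itself but to a half-edge one single Green step away; the paper handles this by observing that $\Omega^{-1}M_1=M'_1$ and $\Omega^{-1}M_2=M'_2$ are the modules in $\mathcal{M}$ corresponding to $x^u$ and $x^v$, and then uses that $M_1$ and $M_2$ lie in the same $\tau$-orbit iff $M'_1$ and $M'_2$ do. So rather than placing $x^u$ and $x^v$ directly on $W_s$ and $W_r$, you shift both rims back by one $\Omega$ and land exactly on the two half-edges of $x$. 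Your high-level observation that $\Omega T_s=T_r$ is correct and consistent with this, but on its own it only tells you $W_r=\sigma(W_s)$ for the single-step permutation $\sigma$; you still need the concrete identification of one rim module with a specific half-edge to tie the condition to $x^u$ and $x^v$. Also, your remark that ``$\Omega$ interchanges the two uniserial halves of $P_x$'' is not accurate as stated ($\Omega S_x=\rad P_x$ is the whole radical); the relevant interchange happens at the level of the mouth modules $M_1\leftrightarrow M_2$, which is exactly the $\Omega^{-1}$-shift above.
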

	
	We then finally prove a similar result for non-exceptional edges, 
	\begin{thm} 
		Let $A$ be a representation-infinite Brauer graph algebra associated to a Brauer graph $G$ and let $\mathfrak{e}_{v}$ denote the multiplicity of a vertex $v$ in $G$. Let $x$ and $y$ be non-exceptional edges of $G$. Then the simple module associated $x$ is in the same component of $_s\Gamma_A$ as the radical of the indecomposable projective module associated to $y$ if and only if either $A$ is 1-domestic or there exists a path 
		\begin{equation*}
			p: \xymatrix@1{u_0 \ar@{-}[r]^{x_1=x} & v_1 \ar@{-}[r]^{x_2} & v_2 \ar@{-}[r] & \cdots \ar@{-}[r] & v_{n-2} \ar@{-}[r]^-{x_{n-1}} & v_{n-1} \ar@{-}[r]^-{x_n=y} & u_1}
		\end{equation*}
		of even length in $G$ consisting of non-exceptional edges such that
		\begin{enumerate}[label=(\roman*)]
			\item every edge $x_i$ is not a loop;
			\item $\mathfrak{e}_{v_i}=1$ if $x_i \neq x_{i+1}$ and $\mathfrak{e}_{v_i}=2$ if $x_i = x_{i+1}$;
			\item $x_i$ and $x_{i+1}$ are the only non-exceptional edges incident to $v_i$ in $G$.
		\end{enumerate}
	\end{thm}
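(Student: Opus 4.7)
The plan is to combine the algorithm of Section~\ref{Algorithm} with the classification of non-exceptional (Euclidean) components established earlier in Section~\ref{Components}. Since $x$ and $y$ are non-exceptional edges, both $S_x$ and $\rad P_y$ are string modules lying in components of the form $\mathbb{Z}\tilde{A}_{p,q}$, and by the theorem on exceptional tubes together with \cite{ErdmannAR} the number of such components is precisely $1$ if $A$ is $1$-domestic and $2$ if $A$ is $2$-domestic. This cleanly splits the problem into two cases.

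The $1$-domestic case is immediate: there is only one Euclidean component of $_s\Gamma_A$, so $S_x$ and $\rad P_y$ must lie in the same component, which matches the first clause of the ``either $\ldots$ or $\ldots$'' statement. The substantive work is therefore the $2$-domestic case, where one must decide which of the two $\mathbb{Z}\tilde{A}_{p,q}$ components each module belongs to, and show that ``same component'' is equivalent to the existence of a path $p$ satisfying (i)--(iii) of even length.

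To prove this, I would use the algorithm of Section~\ref{Algorithm} to trace the irreducible morphisms issuing from $S_x$ and $\rad P_y$ and to follow the evolution of the underlying strings under $\tau$. Recall that irreducible morphisms are given by adding and deleting hooks and cohooks, and a step of $\tau$ amounts to a pair of such operations; this corresponds on the Brauer graph side to a double step of a Green-like walk passing through a vertex adjacent to the current string. I would show by direct analysis that conditions (i)--(iii) precisely say that, starting from the edge $x$, there is only one legitimate way for the algorithm to extend the string through the interior vertices $v_1,\dots,v_{n-1}$ of $p$, and that this forced sequence of moves terminates with the string of $\rad P_y$: condition (i) prevents loops that would ruin the step-by-step propagation, condition (ii) fixes the multiplicities so that the walk does not wrap and return prematurely, and condition (iii) forbids branching into a neighbouring exceptional subtree (which would switch the module into an exceptional tube). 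Even length of $p$ is forced by the fact that $\tau$ corresponds to two walking steps, matching the double-stepped Green walk description of Section~\ref{Components}.

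The main obstacle will be the converse direction. Given that $S_x$ and $\rad P_y$ share a component, I need to reconstruct the path $p$ with the exact properties (i)--(iii). The idea is to run the algorithm of Section~\ref{Algorithm} in both directions along the $\tau$-orbit connecting $S_x$ and $\rad P_y$, read off the sequence of edges visited, and check that the constraints on multiplicities and exceptionality at each intermediate vertex follow from the fact that no step of the walk can leave a Euclidean component for an exceptional tube (by the theorem on exceptional tubes and its corollary). An induction on the length of the walk, with the base case handled by short direct computation of the strings at $S_x$ and $\rad P_y$, should then complete the argument.
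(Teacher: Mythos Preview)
Your proposal contains a fundamental gap: you have tacitly assumed that $A$ is domestic. You write that $S(x)$ and $\rad P(y)$ lie in components of the form $\mathbb{Z}\tilde{A}_{p,q}$ and that there are exactly one or two such components according to whether $A$ is $1$- or $2$-domestic, and you then treat these as the only two cases. But the theorem is stated for an arbitrary representation-infinite Brauer graph algebra. By \cite{bocianSkow} most Brauer graphs (any graph with more than one cycle, or with a vertex of multiplicity at least $3$, or with two vertices of multiplicity $2$ not on a tree, etc.) give non-domestic algebras, and by Theorem~\ref{NonDomARComp} the non-tube components of $_s\Gamma_A$ are then of the form $\mathbb{Z}A_\infty^\infty$, of which there are \emph{infinitely many}. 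Theorem~\ref{EdgeTubes} tells you only that $S(x)$ and $\rad P(y)$ avoid the tubes; it gives no control over which of the infinitely many $\mathbb{Z}A_\infty^\infty$ components they occupy. Your counting argument therefore collapses precisely in the case that carries essentially all of the content of the theorem.

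The paper's proof does not count components at all. For the backward direction it shows directly, using Lemma~\ref{MaxExceptional}, that $S(x_i)$ and $\rad P(x_{i+1})$ are joined by explicit rays in $_s\Gamma_A$ whenever $v_i$ satisfies (ii) and (iii), and then chains these along $p$. For the forward direction it takes the line $L$ through $\rad P(y)$ (operations on the end of the string) and the perpendicular line $L'$ through $S(x)$ (operations on the start), and analyses the string at their intersection point. Iterated use of Lemma~\ref{PathWalk} shows that the end of this string is governed by a non-exceptional Green walk from $y$ while its start is governed by a non-exceptional Green walk from $x$; matching the two descriptions of the same string forces the combinatorial conditions (i)--(iii) on the intermediate vertices. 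This argument is uniform across the domestic and non-domestic cases. Note also that you have misread condition (iii): it does not forbid exceptional edges at $v_i$ (those are harmless and are absorbed by Lemma~\ref{MaxExceptional}); it forbids a \emph{third non-exceptional} edge at $v_i$, which is exactly what would make the two Green-walk descriptions of the intersection string incompatible.
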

	
	Whilst this paper assumes the algebras we are dealing with are symmetric special biserial, many of the results in this paper should carry over to the weakly symmetric case (and hence, should work for the quantised Brauer graph algebras defined, for example, in \cite{SchrollGroup}). Indeed, the indecomposable modules of a weakly symmetric special biserial algebra are again given by string and band modules (see for example \cite{ErdmannString}).
	
	This paper forms part of the author's PhD thesis, and has been \href{https://doi.org/10.1016/j.jalgebra.2018.03.040}{accepted for publication} in The Journal of Algebra.
		
	\section*{Acknowledgements}
	I would like to deliver a special thank you to my supervisor, Sibylle Schroll, for taking the time to proofread my paper. I would also like to thank the EPSRC (award reference: 1366175) for funding my PhD, which has allowed me to undertake this research.
	
	% ================================================================
	% PRELIMINARIES
	% ================================================================
	\section{Preliminaries} \label{Prelim}
	Before we proceed with the results of the paper, we will briefly outline several definitions from various sources and establish a general set of notations. Throughout, we let $K$ be an algebraically closed field and $Q$ a finite connected quiver with vertex set $Q_0$ and arrow set $Q_1$. We let $I$ be an admissible ideal of the path algebra $KQ$ such that $KQ/I$ is finite dimensional. We denote by $\Mod* A$ the category of finitely generated $A$-modules. All modules considered are right modules and thus, we typically read paths in a quiver from left to right.
	
	% ----------------------------------
	% SPECIAL BISERIAL ALGEBRAS
	% ----------------------------------
	\subsection{Special biserial algebras}
	We recall that a module $M$ is called \emph{uniserial} if $\rad^iM/\rad^{i+1}M$ is simple or zero for all $i$; and that an indecomposable projective module $P$ is called \emph{biserial} if $\rad P$ is a sum of at most two uniserial submodules whose intersection is simple or zero.
	
	An algebra is called \emph{special biserial} if it is Morita equivalent to an algebra of the form $KQ/I$ such that
	\begin{enumerate}[label=(SB\arabic*)]
		\item each vertex of $Q$ is the source of at most two arrows and target of at most two arrows in $Q$; and
		\item for each arrow $\alpha$ in $Q$, there exists at most one arrow $\beta$ such that $\beta\alpha \not\in I$ and at most one arrow $\gamma$ such that $\alpha\gamma \not\in I$.
	\end{enumerate}
	
	% -------------------------------
	% BRAUER GRAPH ALGEBRAS
	% -------------------------------
	\subsection{Brauer graph algebras} \label{BGAPrelim}
	Let $G$ be finite, connected, undirected, non-empty graph (with loops and multiple edges permitted). We say $G$ is a \emph{Brauer graph} if $G$ is equipped with a cyclic ordering of the edges adjacent to each vertex, and if each vertex is equipped with a strictly positive integer called the \emph{multiplicity} of the vertex. Loops incident to a vertex appear twice in the cyclic ordering at the vertex. We realise a Brauer graph as a local embedding of the edges adjacent to each vertex into the oriented plane. We use an anticlockwise cyclic ordering throughout. We typically denote the multiplicity of a vertex $v$ in $G$ by $\mathfrak{e}_v$. We call a Brauer graph a \emph{Brauer tree} if $G$ is a tree and at most one vertex $v$ in $G$ has multiplicity $\mathfrak{e}_v>1$.
	
	Let $v$ be a vertex of $G$. We define the \emph{valency} of $v$ to be the positive integer $\val(v)=n$, where $n$ is the total number of edges incident to $v$. A loop incident to a vertex is counted as two edges. We call $v$ a \emph{truncated vertex} of $G$ if $\val(v)=1$ and $\mathfrak{e}_v=1$ and a \emph{non-truncated vertex} of $G$ otherwise. We call an edge $x$ a \emph{truncated edge} of $G$ if $x$ is incident to a truncated vertex of $G$, and we call $x$ a \emph{non-truncated edge} otherwise.
	
	Suppose $x_1, x_2$ are edges incident to a vertex $v$ in a Brauer graph $G$. We say $x_2$ is a \emph{successor} to the edge $x_1$ at $v$ if $x_2$ directly follows $x_1$ in the cyclic ordering at $v$. From this we obtain a sequence $x_1, x_2, \ldots, x_{\val(v)}$, where each $x_{i+1}$ is the successor to $x_i$ at $v$ and $x_1$ is the successor to $x_{\val(v)}$ at $v$. We call this the \emph{successor sequence} of $x_1$ at $v$. By our notation, this corresponds to the edges in $G$ given by locally walking anticlockwise at $v$ from the edge $x_1$ in the oriented plane. We consider a truncated edge $x$ to be the successor to itself at its truncated vertex. If $x$ is a loop in the Brauer graph at a vertex $v$, then $x$ occurs twice in any successor sequence at $v$.
	
	Similarly, suppose $y_1, y_2$ are edges incident to a vertex $v$ in $G$. We say $y_2$ is a \emph{predecessor} to $y_1$ at $v$ if $y_1$ directly follows $y_2$ in the cyclic ordering at $v$. We also obtain a sequence $y_{\val(v)}, \ldots, y_2,  y_1$, where each $y_{i+1}$ is the predecessor to $y_i$ at $v$ and $y_1$ is the predecessor to $y_{\val(v)}$ at $v$. We call this the \emph{predecessor sequence} of $y_1$ at $v$. Equivalently, this sequence is the successor sequence run in reverse and corresponds to the edges in $G$ given by walking clockwise at $v$ from $y_1$. We consider a truncated edge $y$ to be the predecessor to itself at its truncated vertex. If $y$ is a loop at a vertex $v$, then $y$ occurs twice in any predecessor sequence at $v$.
	
	Let $x_1$ be an edge incident to a vertex $v$ in $G$. To each term $x_i$ ($i \in \{1, \ldots, \val(v)\}$) in the successor sequence of $x_1$ at $v$, we identify a \emph{half-edge} $x_i^v$. In the case where there exist edges $x_i=x_j$ for some $1 \leq i<j\leq \val(v)$ (that is, where $x_i=x_j$ is a loop), we define the half-edges $x_i^v$ and $x_j^v$ such that $x_i^v\neq x_j^v$. Consequently, for each edge $\xymatrix@1{u \ar@{-}[r]^{x} & v}$ in the Brauer graph, we associate precisely two half-edges $x^{u}$ and $x^{v}$. We define an involution operation $\overline{\,\cdot\,}$ on a half-edge by $\overline{x^{u}} = x^{v}$ and $\overline{x^{v}} = x^{u}$. If $u=v$ (that is, $x$ is a loop) then we will often distinguish the two half-edges associated to $x$ by $x^{u}$ and $\overline{x^{u}}$.
	
	Given a successor sequence $x_1, \ldots, x_{\val(v)}$ of an edge $x_1$ at a vertex $v$, we say each half-edge $x_{i+1}^v$ is the \emph{successor} to $x_i^v$. From this, we can define the \emph{successor sequence of the half-edge} $x_1^v$ to be the sequence $x_1^v, x_2^v, \ldots, x_{\val(v)}^v$. Similarly given a predecessor sequence $y_{\val(v)}, \ldots, y_2, y_1$ of an edge $y_1$ at a vertex $v$, we say each half-edge $y_{i+1}^v$ is the \emph{predecessor} to $y_i^v$. We define the \emph{predecessor sequence of the half-edge} $y_1^v$ to be the sequence $y_{\val(v)}^v, \ldots, y_2^v, y_1^v$.
	
	Throughout the paper, we make use of graph theoretic paths and cycles, which we define formally here.
	\begin{defn} \label{PathDefn}
		Let $G$ be a Brauer graph. We define a \emph{path} of length $n$ in $G$ to be a sequence of vertices and half-edges 
		\begin{equation*}
			(v_0, x_1^{v_0}, x_1^{v_1}, v_1, x_2^{v_1}, x_2^{v_2}, v_2, \ldots, x_n^{v_{n-1}}, x_n^{v_n}, v_n),
		\end{equation*}
		where each $x_i$ is connected to the vertices $v_{i-1}$ and $v_i$, and each $x_i^{v_i}=\overline{x_i^{v_{i-1}}}$.
	\end{defn}
	It is sufficient to describe a path of the form in Definition~\ref{PathDefn} as a sequence of half-edges. However we often wish to utilise the incident vertices and both half-edges associated to an edge when using paths, and so we include this additional data. For readability purposes, we shall write a path of the form in Definition~\ref{PathDefn} as 
	\begin{equation*}
		\xymatrix@1{v_0 \ar@{-}[r]^-{x_1} & v_1 \ar@{-}[r]^-{x_2} & \cdots \ar@{-}[r]^-{x_{n-1}} & v_{n-1} \ar@{-}[r]^-{x_n} & v_n}
	\end{equation*}
	and if necessary, clarify the order of the half-edges in the event of potential ambiguities resulting from loops and multiple edges.
	\begin{defns}
		Let $G$ be a Brauer graph and $p$ be a path in $G$.
		\begin{enumerate}[label=(\roman*)]
			\item We call $p$ a \emph{simple path} if it is non-crossing at vertices. That is, $v_i \neq v_j$ for all $i \neq j$.
			\item We call $p$ a \emph{cycle} of $G$ if the starting and ending vertices are the same. A cycle is said to be \emph{simple} if it is a simple path (except for the starting and ending vertices).
		\end{enumerate}
	\end{defns}
	
	Following \cite{GreenWalk}, we make the following important definitions.
	\begin{defns}
		Let $G$ be a Brauer graph.
		\begin{enumerate}[label=(\roman*)]
	 		\item A \emph{Green walk} of $G$ from an edge $x_0$ via a vertex $v_0$ is a (periodic) sequence $(x_j^{v_j})_{j\in\mathbb{Z}_{\geq 0}}$ of half-edges such that $x_i$ is connected to $x_{i+1}$ via the vertex $v_i$ and $\overline{x_{i+1}^{v_{i+1}}}$ is the successor to $x_i^{v_i}$. By a \emph{clockwise Green walk} from $x_0$ via $v_0$, we mean a similar sequence $(x_j^{v_j})_{j\in\mathbb{Z}_{\geq 0}}$ of half-edges such that each $\overline{x_{i+1}^{v_{i+1}}}$ is the predecessor to $x_i^{v_i}$.
	 		\item By a \emph{double-stepped Green walk} of $G$, we mean a subsequence $(x_{2k}^{v_{2k}})_{k\in\mathbb{Z}_{\geq 0}}$ of a Green walk $(x_j^{v_j})_{j\in\mathbb{Z}_{\geq 0}}$.
	 		\item We say a Green walk (anticlockwise, clockwise and/or double-stepped) is \emph{of length} $l$ if it is of period $l$ -- that is, $l$ is the least integer such that $x_i^{v_i} = x_{l+i}^{v_{l+i}}$ for all $i$.
	 		\item We say two Green walks $(x_j^{v_j})_{j\in\mathbb{Z}_{\geq 0}}$ and $(y_j^{u_j})_{j\in\mathbb{Z}_{\geq 0}}$ are \emph{distinct} if there exists no integer $k$ such that $x_{i+k}^{v_{i+k}}=y_i^{u_i}$ for all $i$.
		\end{enumerate}
	\end{defns}
	
	Let $G$ be a Brauer graph $G$ and let $X$ be the set of all edges in $G$. We construct an algebra $A$ associated to $G$ as follows. If $G$ is the graph $\xymatrix{u \ar@{-}[r] & v}$ with $\mathfrak{e}_u=\mathfrak{e}_v=1$ then let $Q$ be the quiver with one vertex and a single loop $\alpha$. We further define a single relation on $Q$ by $\alpha^2$. Otherwise, we define $Q$ such that the vertices are in bijective correspondence with the edges of $G$. Let $\sigma:X \rightarrow Q_0$ be such a bijection. If $x_2$ is the direct successor to $x_1$ at a non-truncated vertex in $G$, then there exists an arrow $\sigma(x_1) \rightarrow \sigma(x_2)$ in $Q$. If $x$ is connected to a vertex $v$ such that $\val(v)=1$ and $\mathfrak{e}_v>1$, then there exists a loop in $Q$ at $\sigma(x)$. If $\mathfrak{e}_v=1$, then no such loop exists. Note that it follows from this construction that each non-truncated vertex in $G$ generates a cycle of arrows in $Q$, and no two such cycles share a common arrow. We denote by $\mathfrak{C}_v$ the cycle of $Q$ (up to cyclic rotation) generated by a non-truncated vertex $v$ and by $\mathfrak{C}_{v,\alpha}$ the particular rotation of $\mathfrak{C}_v$ such that the first arrow in the cycle is $\alpha$. 
	
	The next step in the construction is to define a set of relations $\rho$ on $Q$, which will be precisely the relations of the algebra $A$ associated to $G$. If $x$ is a truncated edge of $G$ and $\mathfrak{C}_{v,\gamma_1}=\gamma_1\ldots\gamma_n$ is the cycle generated by the non-truncated vertex $v$ connected to $x$, where $\gamma_1$ is of source $x$, then $(\mathfrak{C}_{v,\gamma_1})^{\mathfrak{e}_v}\gamma_1\in\rho$. If $\xymatrix@1{u \ar@{-}[r]^x & v}$ is a non-truncated edge, and $\mathfrak{C}_{u,\gamma}, \mathfrak{C}_{v,\delta}$ are the two distinct rotations of cycles of source $x$ at the respective vertices $u,v$ (where $u=v$, in the case of a loop), then $(\mathfrak{C}_{u,\gamma})^{\mathfrak{e}_u}-(\mathfrak{C}_{v,\delta})^{\mathfrak{e}_v}\in\rho$. Finally, if $\alpha\beta$ is a path of length 2 in $Q$ and is not a subpath of any cycle generated by any vertex of $G$, then $\alpha\beta\in\rho$. The algebra $A=KQ/I$, where $I$ is the ideal generated by $\rho$, is called the \emph{Brauer graph algebra} associated to $G$. Thus, the relations in $\rho$ define precisely the relations for the Brauer graph algebra associated to $G$.
	
	Since edges in a Brauer graph $G$ are in bijective correspondence with the vertices in the quiver of the associated Brauer graph algebra, we will often consider an edge $x$ as both an edge in $G$ and a vertex in $Q$. That is, by an abuse of notation, we will often write an element $\sigma(x) \in Q_0$ as $x$. We label the simple and indecomposable projective(-injective) modules in the Brauer graph algebra corresponding to an edge $x$ by $S(x)$ and $P(x)$ respectively. We similarly denote the projective cover and injective envelope of an $A$-module $M$ by $P(M)$ and $I(M)$ respectively.
	
	% ----------------------
	% STRING MODULES
	% ----------------------
	\subsection{Strings, bands and string modules}
	We follow the definitions from \cite{butlerRingel}, with some slight alterations to notation. Let $A=KQ/I$ be a Brauer graph algebra associated to a Brauer graph $G$. For each arrow $\alpha \in Q_1$, we denote by $s(\alpha)$ the source of $\alpha$ and $e(\alpha)$ the target of $\alpha$. We denote by $\alpha^{-1}$ the formal inverse of $\alpha$. That is, the symbolic arrow given by $s(\alpha^{-1})=e(\alpha)$ and $e(\alpha^{-1})=s(\alpha)$. By $Q_1^{-1}$, we denote the set of all formal inverses of arrows in $Q_1$.
	
	Since the vertices of $Q$ correspond to the edges of $G$, the source and target of any arrow in $Q_1$ or formal inverse in $Q^{-1}_1$ can be considered as edges of $G$. On the other hand, a half-edge of $G$ can be considered as the source or target of an arrow in $Q$ in the following way. Let $v$ be a non-truncated vertex of $G$ and let $x_1$ be an edge incident to $v$. Let $x_1, x_2,\ldots, x_{\val(v)}$ be the successor sequence of $x_1$ at $v$. Then there exists an arrow $\xymatrix@1{x_i \ar[r]^{\alpha_i} & x_{i+1}}$ in $Q$ for each $1 \leq i \leq \val(v)$. However, the terms $x_i$ and $x_{i+1}$ in the successor sequence correspond to distinct half-edges $x_{i}^v$ and $x_{i+1}^v$ respectively. We therefore define the \emph{source half-edge} of $\alpha_i$ to be $x_i^v$, which we denote by $\widehat{s}(\alpha_i)$. We define the \emph{target half-edge} of $\alpha_i$ to be $x_{i+1}^v$, which we denote by $\widehat{e}(\alpha)$. For a formal inverse $\alpha^{-1}$ of an arrow $\alpha$, we define $\widehat{s}(\alpha^{-1})=\widehat{e}(\alpha)$ and $\widehat{e}(\alpha^{-1})=\widehat{s}(\alpha)$. We note that the definitions of the functions $\widehat{s}$ and $\widehat{e}$ are not present in \cite{butlerRingel}, since Brauer graphs do not appear there.
	
	We call a word $w=\alpha_1\ldots\alpha_n$ with $\alpha_i \in Q_1 \cup Q_1^{-1}$ a \emph{string} if $\alpha_{i+1} \neq \alpha_i^{-1}$, $s(\alpha_{i+1})=e(\alpha_i)$ and no subword nor its inverse occurs in some relation in $\rho$ (where $\rho$ is as defined in Subsection~\ref{BGAPrelim}). We define the \emph{length} of $w$ to be the positive integer $n$, which we denote by $|w|$. We say $w$ is a \emph{direct string} (resp. an \emph{inverse string}) if $\alpha_i \in Q_1$ (resp. $\alpha_i \in Q_1^{-1}$) for all $i$. We define $s(w)=s(\alpha_1)$ and $e(w)=e(\alpha_n)$. A \emph{band} in $A$ is defined to be a cyclic string $b$ such that each power $b^m$ is a string, but $b$ is not a proper power of any string. One can therefore view a band as a cyclic walk along the arrows and formal inverses of $Q$.
	
	We call a string of length zero a \emph{zero string}. To each vertex $x \in Q_0$, we associate precisely one zero string, which we will sometimes denote by the stationary path $\varepsilon_x$ at $x$. If $w=\varepsilon_x$ is a zero string, then we let $s(w)=x=e(w)$. However, the functions $\widehat{s}$ and $\widehat{e}$ are not defined for zero strings. A zero string is defined to be both direct and inverse.
	
	We say a string $w$ \emph{starts on a peak} (resp. \emph{ends on a peak}) if no arrow $\alpha$ exists such that $\alpha w$ (resp. $w \alpha^{-1}$) is a string in $A$. We say $w$ \emph{starts in a deep} (resp. \emph{ends in a deep}) if no arrow $\alpha$ exists such that $\alpha^{-1} w$ (resp. $w \alpha$) is a string in $A$.
	
	For any arrow $\alpha \in Q_1$, let $u_\alpha$ and $v_\alpha$ be the unique inverse strings such that $u_\alpha\alpha$ is a string that starts in a deep and $\alpha v_\alpha$ is a string that ends on a peak. Note that this means $\alpha^{-1} u_{\alpha}^{-1}$ ends in a deep and $v_\alpha^{-1} \alpha^{-1}$ starts on a peak. Suppose $w$ is a string that does not end on a peak. Then there exists an arrow $\beta \in Q_1$ such that $w\beta^{-1}$ is a string. We say the string $w_h = w\beta^{-1}u_\beta^{-1}$ is obtained from $w$ by \emph{adding a hook to the end of} $w$. Similarly, suppose $w$ is a string that does not start on a peak. Then there exists an arrow $\gamma \in Q_1$ such that $\gamma w$ is a string. We say the string $_hw = u_\gamma\gamma w$ is obtained from $w$ by \emph{adding a hook to the start of} $w$. If instead we have $w=w_{-h}\alpha^{-1}u_\alpha^{-1}$ for some substring $w_{-h}$, we say $w_{-h}$ is obtained from $w$ by \emph{deleting a hook from the end of} $w$. Similarly, if $w=u_\alpha\alpha(_{-h}w)$, we say $_{-h}w$ is obtained from $w$ by \emph{deleting a hook from the start of} $w$.
	
	Now suppose that a string $w$ does not end in a deep. Then there exists an arrow $\beta \in Q_1$ such that $w\beta$ is a string. We say the string $w_c=w\beta v_\beta$ is obtained from $w$ by \emph{adding a cohook to the end of} $w$. Similarly, suppose $w$ is a string that does not start in a deep. Then there exists an arrow $\gamma \in Q_1$ such that $\gamma^{-1}w$ is a string. We say the string $_cw = v^{-1}_\gamma\gamma^{-1} w$ is obtained from $w$ by \emph{adding a cohook to the start of} $w$. If instead we have $w=w_{-c}\alpha v_\alpha$ for some substring $w_{-c}$, we say $w_{-c}$ is obtained from $w$ by \emph{deleting a cohook from the end of} $w$. Similarly, if $w=v^{-1}_\alpha\alpha^{-1}(_{-c}w)$, we say $_{-c}w$ is obtained from $w$ by \emph{deleting a cohook from the start of} $w$.
	
	In the case where $w$ is a non-zero string, the strings $w_h$, $w_c$, $_hw$ and $_cw$ are unique by property (SB2) in the definition of a special biserial algebra. The case where $w$ is a zero string is addressed in Subsection~\ref{HooksCohooksSec} and Subsection~\ref{LocSimpleProj}. We caution the reader that whilst the definitions for adding hooks and deleting cohooks is the same as in \cite{butlerRingel} our notation for $w_h$, $w_c$, $_hw$ and $_cw$ is different to \cite{butlerRingel}. This is because \cite{butlerRingel} only considers Auslander-Reiten sequences starting in a given string module, whereas we would like the flexibility to consider the Auslander-Reiten sequences ending in a given string module as well as those starting in a given string module.
	
	Let $w$ be an arbitrary string. We recall from \cite{butlerRingel} that $w$ determines a \emph{string module} $M(w) \in \Mod* A$. It follows from \cite{butlerRingel} that $M(w_1) \cong M(w_2)$ if and only if $w_1 =w_2$ or $w_1=w_2^{-1}$. Moreover, if $w$ is a zero string $\varepsilon_x$, then $M(w)=S(x)$. We refer the reader to \cite{butlerRingel} for the precise details of the construction of $M(w)$. There is also the notion of a band module. Each distinct band induces an infinite family of band modules. However, band modules are not the focus of this paper, and so we also refer the reader to \cite{butlerRingel} for the details of their construction.

	%=================================================================
	% A CONSTRUCTIVE ALGORITHM
	%=================================================================
	\section{A Constructive Algorithm} \label{Algorithm}
	It is already known that the non-projective indecomposable modules of a Brauer graph algebra $A$ are given by string and band modules. Given a string module $M$ in a Brauer graph algebra $A=KQ/I$ constructed from a graph $G$, we wish to be able to read off the (stable) Auslander-Reiten component containing $M$ from $G$.  Since the irreducible morphisms between string modules are given by adding or deleting hooks and cohooks to strings, our algorithm will need to take hooks and cohooks into account.
	% ----------------------------------------------------
	% VISUALISING STRINGS ON A BRAUER GRAPH
	% ----------------------------------------------------
	\subsection{Presenting strings on a Brauer graph}
	 To achieve the aim of this section, we must first describe a method for presenting strings on Brauer graphs. This technique appears to be well known within the subject area. However, it forms an essential part of this paper and thus, we will outline the process here.
	 
	 Let $w=\alpha_1\ldots\alpha_n$ be a (not necessarily direct) string in $A$. Since each vertex in $G$ generates a cycle in $Q$ and no two such cycles share a common arrow in $Q$, we can associate to each arrow (or formal inverse) $\alpha_i$ in the string a vertex in $G$. Suppose $\alpha_i \in Q_1$ is an arrow belonging to the cycle generated by a vertex $v$ and $s(\alpha_i)=x$ and $e(\alpha_i)=y$. Since $x$ and $y$ are edges in the graph, we realise this arrow on $G$ as an anticlockwise arrow around the vertex $v$ of source $x$ and target $y$. Similarly, if we instead have $\alpha_i \in Q^{-1}_1$ such that the arrow $\alpha^{-1}_i$ belongs to the cycle generated by $v$, then we realise this as moving clockwise in $G$ around $v$ from $s(\alpha_i)$ to $e(\alpha_i)$. An example is given in Figure~\ref{StringEg}. Note that if we wish to invert a string, we simply flip the direction of the arrows and formal inverses drawn on the Brauer graph.
	 
	  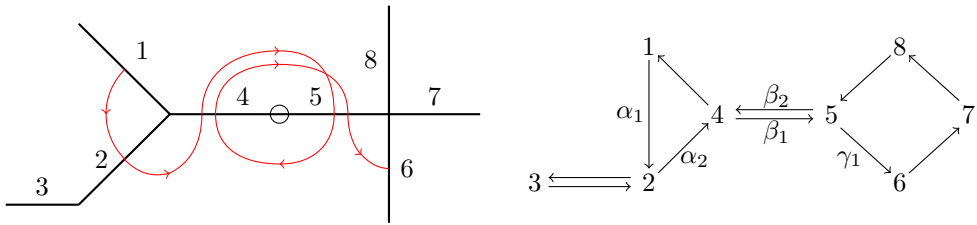
\begin{figure}[b]
	  	\centering
	 	\begin{tikzpicture}[scale=0.6]
			\draw[thick] (-1.6,-2) -- (0, -2);
			\draw[thick] (0,2) -- (2,0) -- (0,-2);
			\draw[thick] (2,0) -- (4.4,0) -- (6.8,0) -- (6.8,2.4);
			\draw[thick] (8.8,0) -- (6.8,0) -- (6.8,-2.4);
			\draw (4.4,0) ellipse (0.2 and 0.2);
	
			\footnotesize
			\draw (1.4,1.4) node { $1$};
			\draw (0.5,-1) node {$2$};
			\draw (-0.8,-1.6) node {$3$};
			\draw (3.6,0.4) node {$4$};
			\draw (5.2,0.4) node {$5$};
			\draw (7.2,-1.2) node {$6$};
			\draw (7.8,0.4) node {$7$};
			\draw (6.4,1.2) node {$8$};

			\draw (12.5,1.5) node {$1$};
			\draw (12.5,-1.5) node {$2$};
			\draw (10,-1.5) node {$3$};
			\draw (14,0) node {$4$};
			\draw (16.5,0) node {$5$};
			\draw (18,-1.5) node {$6$};
			\draw (19.5,0) node {$7$};
			\draw (18,1.5) node {$8$};
		
			\draw (12.1,0) node {$\alpha_1$};
			\draw (13.5,-1) node {$\alpha_2$};
			\draw (15.3,-0.4) node {$\beta_1$};
			\draw (15.3,0.4) node {$\beta_2$};
			\draw (16.9,-1) node {$\gamma_1$};
	
			\draw [->](10.3,-1.6) -- (12.1,-1.6);
			\draw [->](12.1,-1.4) -- (10.3,-1.4);
			\draw [->](12.5,1.2) -- (12.5,-1.2);
			\draw [->](12.7,-1.3) -- (13.8,-0.2);
			\draw [->](13.8,0.2) -- (12.7,1.3);
			\draw [->](14.4,-0.1) -- (16.1,-0.1);
			\draw [->](16.1,0.1) -- (14.4,0.1);
			\draw [->](16.7,-0.3) -- (17.8,-1.3);
			\draw [->](18.2,-1.3) -- (19.3,-0.3);
			\draw [->](17.8,1.3) -- (16.7,0.3);
			\draw [->](19.3,0.3) -- (18.2,1.3);

			\draw [red][->](1.0101,0.9899) arc (135:180:1.3999);
			\draw [red](0.6,0) arc (180:225:1.4);
			\draw [red][->](1,-1) .. controls (1.3,-1.3) and (1.6,-1.4) .. (2,-1.3);
			\draw [red](2,-1.3) .. controls (2.4,-1.1) and (2.7,-0.7) .. (2.7,0);
			\draw [red][->](2.7,0) .. controls (2.7,0.9) and (3.4,1.4) .. (4.4,1.4);
			\draw [red][->](4.4,1.4) .. controls (5.4,1.4) and (5.6,0.7) .. (5.6,0) .. controls (5.6,-0.7) and (5.1,-1.1) .. (4.4,-1.1);
			\draw [red][->](4.4,-1.1) .. controls (3.8,-1.1) and (3,-0.9) .. (3,0) .. controls (3,0.9) and (3.8,1.1) .. (4.4,1.1);
			\draw [red][->](4.4,1.1) .. controls (5.3,1.1) and (5.9,0.8) .. (5.9,0) .. controls (5.9,-0.5) and (6.1,-0.8) .. (6.2,-0.9);
			\draw [red](6.2,-0.9) .. controls (6.3,-1) and (6.5,-1.2) .. (6.8,-1.2);
		\end{tikzpicture}
	 	\caption{A Brauer tree $T$ (left) and its corresponding quiver (right) with the string $w=\alpha_1\alpha_2\beta^{-1}_2\beta^{-1}_1\beta^{-1}_2\gamma_1$ presented on the Brauer tree in red. The circled vertex in $T$ has a multiplicity of two, and all other vertices have a multiplicity of one.} \label{StringEg}
	 \end{figure}
	 
	 We often wish to perform this procedure in reverse. That is, we may wish to draw a sequence (or path) of connected `arrows' through the edges of $G$ and interpret this as a string in the algebra. To do this, we must describe which of these paths give valid strings. First recall that for a string $w=\alpha_1\ldots\alpha_n$, we require $\alpha_{i+1} \neq \alpha^{-1}_i$. Thus, we cannot draw an arrow anticlockwise around a vertex $v$ from an edge $x$ to an edge $y$ followed by a clockwise formal inverse around $v$ from $y$ to $x$ (and vice versa). Furthermore, if $v$ is a truncated vertex, then $v$ generates no arrows in $Q$ and thus, we cannot draw any arrows around $v$ in the graph.
	 
	 A string must also avoid the relations in $I$, so if we have edges $\xymatrix@1{\ar@{-}[r]^x & v \ar@{-}[r]^-y & v' \ar@{-}[r]^-z &}$, we cannot draw an anticlockwise (resp. clockwise) arrow (resp. formal inverse) around $v$ from $x$ to $y$ followed by an anticlockwise (resp. clockwise) arrow (resp. formal inverse) around $v'$ from $y$ to $z$. Finally, if a vertex $v$ has multiplicity $\mathfrak{e}_v$ and $v$ generates a cycle $\gamma_1\ldots\gamma_m$ in $Q$, then we generally cannot draw an anticlockwise cycle of arrows $(\gamma_1\ldots\gamma_m)^{\mathfrak{e}_v}$ or a clockwise cycle of formal inverses $(\gamma^{-1}_m\ldots\gamma^{-1}_1)^{\mathfrak{e}_v}$ around $v$ on the graph. An exception to this rule is the string given by a uniserial indecomposable projective. However, since we only wish to consider the stable Auslander-Reiten quiver of $A$, we will ignore this exception. In all other cases, we obtain a valid string. For examples of sequences of arrows and formal inverse presented on the graphs that are not valid strings, see Figure~\ref{StringNonEg}.
	 
	 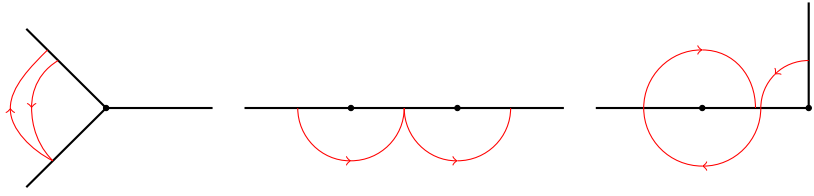
\begin{figure}[t]
	 	\centering
	 	\begin{tikzpicture}[scale=0.7]
			\draw[thick] (0.5,1.5) -- (2,0) -- (4,0);
			\draw[thick] (0.5,-1.5) -- (2,0) node (v3) {};
			\draw [red, ->](1.1,0.9) .. controls (0.6,0.6) and (0.6,0.1) .. (0.6,0);
			\draw [red](0.6,0) .. controls (0.6,-0.1) and (0.6,-0.6) .. (1,-1);
			\draw [red](0.9,1.1) .. controls (0.6,0.8) and (0.2,0.4) .. (0.2,0);
			\draw [red,<-](0.2,0) .. controls (0.2,-0.4) and (0.6,-0.8) .. (1,-1);

			\draw[thick] (4.6,0) -- (6.6,0) node (v1) {} -- (8.6,0) node (v2) {}-- (10.6,0);
	
			\draw [fill=black] (v1) ellipse (0.05 and 0.05);
			\draw [fill=black] (v2) ellipse (0.05 and 0.05);
			\draw [fill=black] (v3) ellipse (0.05 and 0.05);

			\draw [red, ->](5.6,0) arc (180:270:1);
			\draw [red](6.6,-1) arc (-90:0:1);
			\draw [red, ->](7.6,0) arc (180:270:1);
			\draw [red](8.6,-1) arc (-90:0:1);
			\draw [thick](11.2,0) -- (13.2,0) node (v4) {} -- (15.2,0) node (v5) {} -- (15.2,2);
			\draw [red](14.5636,0.6364) arc (135:180:0.9);
			\draw [red](13.2,-1.1) arc (-90:-180:1.1);
			\draw [red, ->](15.2,0.9) arc (90:135:0.9);
			\draw [red, ->](14.3,0) arc (0:-90:1.1);
			\draw [red, ->](12.1,0) arc (180:90:1.1);
			\draw [red](13.2,1.1) .. controls (13.8,1.1) and (14.2,0.6) .. (14.2,0);
			\draw [fill=black] (v4) ellipse (0.05 and 0.05);
			\draw [fill=black] (v5) ellipse (0.05 and 0.05);
		\end{tikzpicture}
	 	\caption{The above sequences of arrows and formal inverse presented on the graphs are not valid strings. All vertices are of multiplicity 1.} \label{StringNonEg}
	 \end{figure}
	 
	 \begin{rem} \label{HEStartEnd}
	 	For any string $\alpha\beta$ of length 2, we have $\widehat{e}(\alpha)=\widehat{s}(\beta)$ if and only if $\alpha\beta$ is direct or inverse. This follows from the fact that $\alpha\beta \in I$ if $\alpha\beta$ is direct/inverse and $\widehat{e}(\alpha)\neq\widehat{s}(\beta)$, since in this case, $\alpha$ does not directly follow $\beta$ in any cycle $\mathfrak{C}_v$ for any vertex $v$ in $G$. Conversely, $\alpha=\beta^{-1}$ if $\widehat{e}(\alpha)=\widehat{s}(\beta)$ but $\alpha\beta$ is not direct/inverse.
	 \end{rem}
	 
	% ------------------------------------------------
	% MAXIMAL DIRECT AND INVERSE STRINGS
	% ------------------------------------------------
	 \subsection{Maximal direct and inverse strings}
	 To construct hooks and cohooks, we will need to know precisely when a string $w$ ends in a deep or on a peak. Specifically, we have the following.
	 
	 \begin{lem} \label{PeakDeep}
	 	Let $A$ be a Brauer graph algebra constructed from a graph $G$ and let $w$ be a string in $A$. Suppose $M(w)$ is non-projective. Then
	 	\begin{enumerate}[label=(\alph*)]
	 		\item $w$ ends in a deep if and only if either
	 		\begin{enumerate}[label=(\roman*)]
	 			\item $w=w_0\alpha^{-1}$, where $w_0$ is a string and $\alpha^{-1}\in Q_1^{-1}$ is a formal inverse such that $e(\alpha^{-1})$ is a truncated edge in $G$; or
	 			\item $w=w_0\gamma^{\mathfrak{e}_v-1}\gamma_1\ldots\gamma_{m-1}$, such that $w_0$ is a string, $\gamma=\gamma_1\ldots\gamma_m$ is a cycle generated by a vertex $v$ in $G$, and if $w_0$ is a zero string then $s(w)$ is not truncated.
	 		\end{enumerate}
	 		\item $w$ ends on a peak if and only if either
	 		\begin{enumerate}[label=(\roman*)]
	 			\item $w=w_0\alpha$, where $w_0$ is a string and $\alpha \in Q_1$ is an arrow such that $e(\alpha)$ is a truncated edge in $G$; or
	 			\item $w=w_0(\gamma^{-1})^{\mathfrak{e}_v-1}\gamma^{-1}_m\ldots\gamma^{-1}_2$, such that $w_0$ is a string, $\gamma=\gamma_1\ldots\gamma_m$ is a cycle generated by a vertex $v$ in $G$, and if $w_0$ is a zero string then $s(w)$ is not truncated.
	 		\end{enumerate}
	 	\end{enumerate}
	 \end{lem}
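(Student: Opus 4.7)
I will prove part (a); part (b) follows symmetrically using predecessor sequences and clockwise orientation (or equivalently via $w \mapsto w^{-1}$, swapping the roles of direct arrows and formal inverses). Let $y := e(w)$ with vertex endpoints $u, v$ in $G$. The arrows of $Q$ with source $y$ correspond to the non-truncated endpoints of $y$; write $\sigma_v, \sigma_u$ for the corresponding out-arrows when they exist. Then $w$ ends in a deep if and only if neither $w\sigma_v$ nor $w\sigma_u$ is a string, so the proof reduces to measuring the hypotheses $(\mathrm{i}),(\mathrm{ii})$ against the obstructions that prevent $w\sigma$ from being a string.

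For the $(\Leftarrow)$ direction, hypothesis $(\mathrm{i})$ makes $y$ truncated, so exactly one out-arrow $\alpha$ exists, and appending it creates the forbidden consecutive pair $\alpha^{-1}\alpha$. Under hypothesis $(\mathrm{ii})$, $y = s(\gamma_m)$ and the out-arrow at $v$ is $\gamma_m$; appending it yields $w\gamma_m = w_0(\gamma_1\ldots\gamma_m)^{\mathfrak{e}_v}$, whose trailing factor $\mathfrak{C}_v^{\mathfrak{e}_v}$ generates the socle of $P(s(\gamma_1))$. The non-projectivity of $M(w)$ together with the stipulation that $s(w)$ be non-truncated when $w_0 = \varepsilon$ rules out $M(w\gamma_m)$ being that projective and forces the relevant (commutativity or zero) relation at $s(\gamma_1)$ to fail, so $w\gamma_m$ is not a string. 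The other potential extension is by the out-arrow $\sigma_{u'}$ at the opposite endpoint $u'$ of $y$ (when $u'$ is non-truncated), for which the length-two factor $\gamma_{m-1}\sigma_{u'}$ (respectively $\gamma_1\sigma_{u'}$ when $m=1$) consists of arrows from distinct cycles at different vertices; it is therefore a direct path not contained in any cycle, and hence lies in $I$ by the defining quadratic relations of a Brauer graph algebra.

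For the $(\Rightarrow)$ direction, assume neither $(\mathrm{i})$ nor $(\mathrm{ii})$ holds; I will produce an extending arrow. If $w = \varepsilon_y$, then either $y$ has a non-truncated endpoint whose out-arrow gives a one-letter extension, or else $G$ is the exceptional single-edge graph handled by the explicit definition $\alpha^2 \in I$. If $w = w_0\alpha^{-1}$, then the failure of $(\mathrm{i})$ makes $y$ non-truncated, so both out-arrows $\sigma_v, \sigma_u$ exist; appending the one distinct from $\alpha$ succeeds, since the resulting new length-two factor is of mixed orientation and so appears in none of the relations generating $I$. Finally, if $w = w_0\phi$ with $\phi$ a direct arrow, then $\phi$ is the in-arrow at some non-truncated endpoint $v_*$ with cycle $\gamma$; counting the maximal suffix of $w$ contained in $\gamma$ shows that appending $\sigma_{v_*}$ yields a string unless this suffix has length exactly $m\mathfrak{e}_{v_*} - 1$, which forces $w$ into the form in $(\mathrm{ii})$ and contradicts our standing assumption.

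The principal obstacle is rigorously establishing that $w\gamma_m = w_0\mathfrak{C}_v^{\mathfrak{e}_v}$ is not a string in the $(\Leftarrow)$ step for $(\mathrm{ii})$. This requires separately treating the cases where $s(\gamma_1)$ is truncated (invoking the zero relation $\mathfrak{C}_v^{\mathfrak{e}_v}\gamma_1 \in I$ to propagate zeros that collide with $w_0$, or recognising the would-be module $M(w\gamma_m)$ as the projective ruled out by hypothesis) and where $s(\gamma_1)$ is non-truncated (invoking the commutativity relation $\mathfrak{C}_v^{\mathfrak{e}_v} = \mathfrak{C}_u^{\mathfrak{e}_u}$ in $A$ and observing that $\mathfrak{C}_u^{\mathfrak{e}_u}$ must act as zero on the relevant basis element of the candidate module because no $\delta$-arrow appears in $w_0\mathfrak{C}_v^{\mathfrak{e}_v}$, producing an inconsistency). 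Once this subtlety is dispatched, all remaining verifications reduce to the defining quadratic relations of the Brauer graph algebra and the forbidden consecutive pair $\alpha^{-1}\alpha$.
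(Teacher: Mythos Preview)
Your proof is correct and follows essentially the same approach as the paper's: a case analysis on the last symbol of $w$ (formal inverse, direct arrow, or zero string), using the defining quadratic and commutativity/zero relations of the Brauer graph algebra to determine exactly when a direct arrow can be appended, with the same truncated/non-truncated dichotomy for $s(\gamma_1)$ in case~(ii). One minor imprecision: in the $(\Leftarrow)$ step for~(ii) you invoke the non-projectivity of $M(w)$ to rule out $M(w\gamma_m)$ being projective, but that hypothesis plays no role there---it is the stipulation in~(ii) (that $s(w)$ be non-truncated when $w_0=\varepsilon$) alone that excludes the case $w_0=\varepsilon$ with $s(\gamma_1)$ truncated, and when $w_0\neq\varepsilon$ the word $w_0\gamma^{\mathfrak{e}_v}$ is simply too long to be any projective's string.
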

	 \begin{proof}
	 	(a) ($\Leftarrow:$) We will first prove (i) implies $w$ ends in a deep. So suppose $w=w_0\alpha^{-1}$ and $e(w)=x$, where $x$ is truncated. Then $P(x)$ is uniserial and thus there exists precisely one arrow of source $x$ and precisely one arrow of target $x$ in $Q$. But this implies the only arrow in $Q$ of source $x$ is $\alpha$. So there exists no arrow $\beta \in Q_1$ such that $w\beta$ is a string and hence, $w$ ends in a deep.
	 	
	 	Now suppose $w$ is instead of the form in (ii). We will show that this also implies $w$ ends in a deep. Let $x=s(\gamma_1)$. The first case to consider is where $x$ is non-truncated. In this case, there exists a relation $\gamma^{\mathfrak{e}_v} - \delta^{\mathfrak{e}_u}\in I$ for some cycle $\delta$ in $Q$ generated by a vertex $u$ connected to $x$. Thus, $\gamma^{\mathfrak{e}_v}$ is not a string. Furthermore, following the definition of a Brauer graph algebra, if $\gamma_i\beta\in I$ for some arrow $\beta$, then $\beta \neq \gamma_{i+1}$. Since there are no other relations involving each arrow $\gamma_i$ of the cycle $\gamma$, we conclude $\gamma^{\mathfrak{e}_v-1}\gamma_1\ldots\gamma_{m-1}$ is a string that ends in a deep.
	 	
	 	The other case to consider is where $x$ is truncated. Since $w$ is of the form in (ii), $w_0$ is not a zero string. Suppose for a contradiction that $w$ does not end in a deep. Then $w\gamma_m$ is a string, since $\gamma_{m-1}\beta \in I$ for any arrow $\beta \neq \gamma_m$. But then $w_0 \gamma^{\mathfrak{e}_v}$ is a string for some non-zero string $w_0$. This is not possible, since $\delta \gamma_1 \in I$ for any $\delta \neq \gamma_m$, and $\gamma_m\gamma^{\mathfrak{e}_v} \in I$. So $w$ must end in a deep.
	 	
	 	($\Rightarrow:$) Suppose conditions (i) and (ii) do not hold. If $w$ is a zero string, then $w$ clearly does not end in a deep. So let $w=\alpha_1\ldots\alpha_n$. First consider the case where $\alpha_n \in Q^{-1}_1$. Note that the arrow $\alpha_n^{-1}$ belongs to a cycle $\mathfrak{C}_v$ for some vertex $v$ in $G$. Since condition (i) does not hold, $P(e(w))$ is biserial and is the source of two distinct arrows in $Q$, so there exists an arrow $\beta$ not in $\mathfrak{C}_v$ such that $s(\beta)=e(\alpha_n)$. So clearly $w\beta$ is a string.
	 	
	 	Now suppose $\alpha_n \in Q_1$. Let $w''$ be the direct substring at the end of $w$ such that either $w=w''$ or $w=w'\beta^{-1}w''$ for some substring $w'$ and some $\beta^{-1}\in Q_1^{-1}$. Let $\gamma_1$ be the first symbol of $w''$. Then $\gamma_1$ belongs to a cycle of $Q$ generated by some vertex $v$ in $G$. Suppose $\gamma=\gamma_1\ldots\gamma_m$ is the cycle generated by $v$. Then $\alpha_n=\gamma_i$ for some $i$. Since condition (ii) does not hold, $w''$ forms a proper subword of $\gamma^{\mathfrak{e}_v-1}\gamma_1\ldots\gamma_{m-1}$ and $i \neq m-1$. Since $\gamma^{\mathfrak{e}_v-1}\gamma_1\ldots\gamma_{m-1}$ is a string, $w\gamma_{i+1}$ is a string and so $w$ does not end in a deep.
	 	
	 	(b) The proof is similar to (a).
	 \end{proof}
	 
	 We may also describe when a string $w$ presented on $G$ starts in a deep or on a peak -- we simply consider the string $w^{-1}$ in the context of the lemma above.
	 
	 \begin{rem} \label{MaxString}
	 	Suppose $x$ is a non-truncated edge of $G$ connected to a vertex $v$ and $\gamma=\gamma_1\ldots\gamma_n$ is the cycle of $Q$ generated by $v$ with $s(\gamma_1)=x$. Then it follows trivially from the above that a maximal direct string of source $x$ is $\gamma^{\mathfrak{e}_v-1}\gamma_1\ldots\gamma_{n-1}$ and a maximal inverse string of source $x$ is $(\gamma^{-1})^{\mathfrak{e}_v-1}\gamma^{-1}_n\ldots\gamma^{-1}_2$. Furthermore, to each non-truncated edge in $G$, we can associate two maximal direct strings and two maximal inverse strings -- one for each vertex connected to the edge (or in the case of a loop, one for each half-edge associated to the edge).
	 \end{rem}
	 
	% ---------------------------
	% HOOKS AND COHOOKS
	% ---------------------------
	 \subsection{Hooks and cohooks} \label{HooksCohooksSec}
	 Suppose a string $w$ does not end on a peak. If $w$ is a zero string, then there are at most two ways in which we can add a formal inverse to $w$. The precise number of ways we can do this for a zero string $w$ is determined by the number of arrows of target $e(w)$ in $Q$. Thus, if $e(w)$ is truncated then there is only one way in which we can add a formal inverse to $w$. Similarly, if $w$ is not a zero string, it follows from the definition of strings and the fact that $A$ is special biserial that there is only one way to add a formal inverse to the end of $w$.
	 
	 Adding a hook to the end of $w$ is by definition a string $w_h=w\alpha^{-1}\beta_1\ldots\beta_n$ that ends in a deep. For any given $w$, this string is necessarily unique unless $w$ is a zero string arising from a non-truncated edge in the Brauer graph. Let $e(\alpha^{-1})$ be the edge $\xymatrix@1{u \ar@{-}[r]^x & v}$ such that $\widehat{e}(\alpha^{-1})=x^u$. If $x$ is a truncated edge, then $w\alpha^{-1}$ ends in a deep by Lemma~\ref{PeakDeep}(a)(i) and hence $w_h=w\alpha^{-1}$. Otherwise, $\beta_1\ldots\beta_n$ is given by the maximal direct string around the vertex $v$ such that $\widehat{s}(\beta_1)=x^v$, described in Remark~\ref{MaxString}.
	 
	 Similarly, if $w$ does not end in a deep, then $w_c=w\alpha\beta^{-1}_1\ldots\beta^{-1}_n$ and $w_c$ ends on a peak. Let $e(\alpha)$ be the edge $\xymatrix@1{u \ar@{-}[r]^x & v}$ such that $\widehat{e}(\alpha)=x^u$. If $x$ is a truncated edge, then $w_c=w\alpha$ by Lemma~\ref{PeakDeep}(b)(i). Otherwise, $\beta^{-1}_1\ldots\beta^{-1}_n$ is given by the maximal inverse string around the vertex $v$ such that $\widehat{s}(\beta^{-1}_1)=x^v$, described in Remark~\ref{MaxString}.
	 
	 Informally, the process of adding or deleting hooks and cohooks to a string $w$ presented on a Brauer graph can be summarised as follows:
	 
	 \begin{enumerate}[label=(A\arabic*)]
	 	\item To add a hook to the end of $w$, we add a clockwise formal inverse to the end of $w$ around a vertex $u$ onto a connected edge $\xymatrix@1{u \ar@{-}[r]^x & v}$ and then, if $x$ is not truncated, add a maximal direct string of anticlockwise arrows around the vertex $v$. \label{AlAddHook}
	 	\item To add a cohook to the end of $w$, we add an anticlockwise arrow to the end of $w$ around a vertex $u$ onto a connected edge $\xymatrix@1{u \ar@{-}[r]^x & v}$ and then, if $x$ is not truncated, add a maximal inverse string of clockwise formal inverses around the vertex $v$. \label{AlAddCohook}
	 	\item To delete a hook from the end of $w$, we delete as many anticlockwise arrows as we can from the end of the string, and then we delete a single clockwise formal inverse. \label{AlDeleteHook}
	 	\item To delete a cohook from the end of $w$, we delete as many clockwise formal inverses as we can from the end of the string, and then we delete a single anticlockwise arrow. \label{AlDeleteCohook}
	 	\item To add or delete hooks or cohooks from the start of $w$, we follow (A1)-(A4) with the string $w^{-1}$. \label{AlStart}
	 \end{enumerate}
	 
	 Note however that there remains an ambiguity arising from zero strings associated to non-truncated edges in the Brauer graph. Suppose $w=\varepsilon_y$ with $y$ non-truncated and let $\alpha_1$ and $\alpha_2$ be distinct arrows of target $y$ in $Q$. Since $\alpha_1$ and $\alpha_2$ are distinct, $\widehat{e}(\alpha_1)\neq \widehat{e}(\alpha_2)$. When adding a hook to the end of $w$, we thus have a choice of $w_h=\alpha_1^{-1}w'$ or $w_h=\alpha_2^{-1}w''$ (for some maximal direct strings $w'$ and $w''$). If we choose $w_h=\alpha_1^{-1}w'$, then we must define $_hw=(w'')^{-1}\alpha_2$ (and vice versa). In terms of the Brauer graph, this reflects the choice of the vertex connected to $y$ around which we add a clockwise formal inverse. A similar choice exists for adding cohooks to $w$. If no other modules in the Auslander-Reiten component containing $S(y)$ are known, then this choice is simply a matter of notation. On the other hand, if other modules in the Auslander-Reiten component containing $S(y)$ are known, then we refer the reader to Subsection~\ref{LocSimpleProj} (and in particular Lemma~\ref{SimpleRay}).
	 
	 We further note from the almost split sequences given throughout \cite[Section 3]{butlerRingel}, that given a string module $M=M(w)$, the rays of source and target $M$ in the stable Auslander-Reiten quiver are given by Figure~\ref{ARRays}, where $\tau M(w_+)=M(w'_-)$, $\tau M(w'_+) = M(w_-)$ and 
	\begin{align*}
		w'_{-}&=
		\begin{cases}
			{_{-h}w} & \text{if } w \text{ starts in a deep,} \\
			{_cw} & \text{otherwise,}
		\end{cases}
		& w_{+}&=
		\begin{cases}
			w_{-c} & \text{if } w \text{ ends on a peak,} \\
			w_h & \text{otherwise,}
		\end{cases} &~ \\
		w_{-}&=
		\begin{cases}
			w_{-h} & \text{if } w \text{ ends in a deep,} \\
			w_c & \text{otherwise,}
		\end{cases}
		& w'_{+}&=
		\begin{cases}
			_{-c}w & \text{if } w \text{ starts on a peak,} \\
			_hw & \text{otherwise.}
		\end{cases} &~
	\end{align*}
	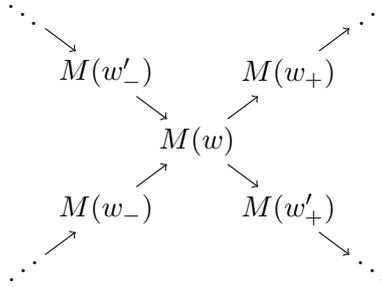
\begin{figure}[h]
		\centering
		\begin{tikzpicture}
			\draw (0,0) node {\small $M(w)$};
			\draw (-1.2,0.9) node {\small $M(w'_{-})$};
			\draw (1.2,0.9) node {\small $M(w_{+})$};
			\draw (-1.2,-0.9) node {\small $M(w_{-})$};
			\draw (1.2,-0.9) node {\small $M(w'_{+})$};
	
			\draw [->](-0.8,0.6) -- (-0.4,0.3);
			\draw [->](-0.8,-0.6) -- (-0.4,-0.3);
			\draw [->](0.4,-0.3) -- (0.8,-0.6);
			\draw [->](0.4,0.3) -- (0.8,0.6);
	
			\draw [->](-2,1.5) -- (-1.6,1.2);
			\draw [->](-2,-1.5) -- (-1.6,-1.2);
			\draw [->](1.6,-1.2) -- (2,-1.5);
			\draw [->](1.6,1.2) -- (2,1.5);
		
			\draw (-2.3,1.8) node {\small $\ddots$};
			\draw (2.3,-1.6) node {\small $\ddots$};
			\draw (2.3,1.8) node {\small $\iddots$};
			\draw (-2.3,-1.6) node {\small $\iddots$};
		\end{tikzpicture}
		\caption{The rays of source and target a given string module in the stable Auslander-Reiten quiver of a Brauer graph algebra.} \label{ARRays}
	\end{figure}
	
	Combining this information with (A1)-(A5) above, we obtain a constructive algorithm for reading off the stable Auslander-Reiten component containing $M$ from the Brauer graph.
	
	\begin{exam}
		With the Brauer tree algebra given in Figure~\ref{StringEg}, let $w$ be the string such that $M(w)=S(1)$. Then the string presented in Figure~\ref{StringEg} is the 4th module along the ray of target $S(1)$ in the direction of $M(w_-)$ of Figure~\ref{ARRays}.
	\end{exam}
	\begin{exam}
		A segment of a ray is presented in Figure~\ref{PathWalkExample} in Section~\ref{LocSimpleProj}. Here $w_0=w$ and $w_1=w_+$ in the context of Figure~\ref{ARRays}.
	\end{exam}
	
	Whilst the above algorithm is for constructing the stable Auslander-Reiten quiver, one can easily obtain the Auslander-Reiten quiver with the indecomposable projectives (which are also injective) using the almost split sequence
	\begin{equation*}
		\xymatrix@1{0 \ar[r] &\rad P \ar[r] & \rad P / \soc P \oplus P \ar[r] & P / \soc P \ar[r] & 0}
	\end{equation*}
	for each indecomposable projective $P$.
	
	% ================================================================
	% COMPONENTS
	% ================================================================
	\section{Components} \label{Components}
	Throughout this section, we will assume (unless specified otherwise) that our Brauer graph algebras are of infinite representation type. In \cite{ErdmannAR}, Erdmann and Skowro\'{n}ski classified the Auslander-Reiten components for self-injective special biserial algebras. For convenience, we shall restate part of their main results here.
	\begin{thm}[\cite{ErdmannAR}]
		Let $A$ be a special biserial self-injective algebra. Then the following are equivalent:
		\begin{enumerate}[label=(\roman*)]
			\item $A$ is representation-infinite domestic.
			\item $A$ is representation-infinite of polynomial growth.
			\item There are positive integers $m$, $p$, $q$ such that $_s\Gamma_A$ is a disjoint union of $m$ components of the form $\mathbb{Z}\tilde{A}_{p,q}$, $m$ components of the form $\mathbb{Z}A_\infty/\left<\tau^p\right>$, $m$ components of the form $\mathbb{Z}A_\infty/\left<\tau^q\right>$, and infinitely many components of the form $\mathbb{Z}A_\infty/\left<\tau\right>$.
		\end{enumerate}
	\end{thm}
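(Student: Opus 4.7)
The plan is to establish the three implications cyclically, (i) $\Rightarrow$ (ii) $\Rightarrow$ (iii) $\Rightarrow$ (i), broadly following the original argument of \cite{ErdmannAR}. The implication (i) $\Rightarrow$ (ii) does not use the special biserial hypothesis and follows from the general principle that domestic representation type forces polynomial growth of the number of one-parameter families of indecomposables of fixed dimension. The implication (iii) $\Rightarrow$ (i) is read off directly from the prescribed shape of $_s\Gamma_A$: a stable Auslander--Reiten quiver composed of finitely many Euclidean components, finitely many exceptional tubes of bounded rank, and otherwise only homogeneous tubes supports at most finitely many one-parameter families of indecomposables in each dimension, so the algebra is automatically domestic of infinite representation type.

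The bulk of the work is (ii) $\Rightarrow$ (iii), which I would carry out using the string and band description of $\Mod* A$. Every non-projective indecomposable is a string module $M(w)$ or a band module $M(b,\lambda,n)$, and for each band $b$ and each $n\geq 1$ the family $\{M(b,\lambda,n):\lambda\in K^{\ast}\}$ assembles into a homogeneous tube $\mathbb{Z}A_\infty/\langle\tau\rangle$. If $A$ had infinitely many bands up to cyclic rotation and inversion, it would admit infinitely many one-parameter families of indecomposables in each sufficiently large dimension, contradicting polynomial growth. Hence (ii) forces the set of bands to be finite, of cardinality say $m$; this $m$ will turn out to equal both the number of Euclidean components in $_s\Gamma_A$ and the number of exceptional tubes of each of the two ranks $p$ and $q$.

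The hard part is extracting the precise shape $\mathbb{Z}\tilde{A}_{p,q}$ of the Euclidean components and the ranks of the two families of exceptional tubes from the string combinatorics. My plan is to invoke the constructive algorithm of Section~\ref{Algorithm}: starting from a band $b$, I would consider strings obtained by concatenating tails $u$ and $v$ with powers $b^n$, where the tails are chosen to end on peaks or in deeps, and use the hook/cohook operations \ref{AlAddHook}--\ref{AlStart} to organise these into a single Auslander--Reiten component. Extending the tails furnishes the two infinite arms, while the powers $b^n$ build the $\tau$-periodic mouth, producing the shape $\mathbb{Z}\tilde{A}_{p,q}$ with $p$ and $q$ the two ``lengths'' recorded on the two sides of $b$. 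The two associated exceptional tubes then consist of those strings that wind indefinitely in only one direction along $b$, whose $\tau$-orbits close up after exactly one full traversal of the corresponding side; since each new traversal is forced by a single hook or cohook addition, the resulting tubes have ranks $p$ and $q$ respectively.

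The main obstacle will be to verify that distinct bands give disjoint collections of Euclidean components and that the two tube-ranks at each band match correctly. I would tackle this on the Brauer graph by translating each band into a closed walk on $G$ and reading $p$ and $q$ off the two sides of this walk, which foreshadows the combinatorial bijection with double-stepped Green walks stated at the beginning of Section~\ref{Components} and the explicit formulas of Corollary~\ref{TubeType}.
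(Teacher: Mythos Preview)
The paper does not prove this theorem at all: it is quoted verbatim from \cite{ErdmannAR} (Erdmann--Skowro\'{n}ski) under the heading ``For convenience, we shall restate part of their main results here,'' and no argument is supplied. So there is nothing in the paper to compare your proposal against; the result is simply imported as background.

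That said, a remark on your sketch. The theorem concerns arbitrary special biserial self-injective algebras, yet for the core implication (ii) $\Rightarrow$ (iii) you invoke the constructive algorithm of Section~\ref{Algorithm} and the Brauer-graph combinatorics leading to Corollary~\ref{TubeType}. Those tools are developed in the paper only for symmetric special biserial algebras presented via a Brauer graph, so they cannot be used to establish the general statement you are asked to prove; at best they would recover the theorem in the symmetric case. The actual proof in \cite{ErdmannAR} works directly with the string/band combinatorics of $KQ/I$ and with covering techniques, without passing through Brauer graphs. Your outline of (i) $\Rightarrow$ (ii) and (iii) $\Rightarrow$ (i) is fine in spirit, and your identification of ``finitely many bands'' as the pivot is correct, but the extraction of the precise component shapes $\mathbb{Z}\tilde{A}_{p,q}$ and the matching tube ranks $p,q$ needs an argument intrinsic to special biserial algebras, not one that presupposes the Brauer-graph framework built later in this paper.
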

	\begin{thm}[\cite{ErdmannAR}] \label{NonDomARComp}
		Let $A\cong KQ/I$ be a special biserial self-injective algebra. Then the following are equivalent:
		\begin{enumerate}[label=(\roman*)]
			\item $A$ is not of polynomial growth.
			\item $(Q, I)$ has infinitely many bands.
			\item $_s\Gamma_A$ is a disjoint union of a finite number of components of the form $\mathbb{Z}A_\infty/\left<\tau^n\right>$ with $n>1$, infinitely many components of the form $\mathbb{Z}A_\infty/\left<\tau\right>$, and infinitely many components of the form $\mathbb{Z}A_\infty^\infty$.
		\end{enumerate}
	\end{thm}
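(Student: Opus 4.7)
The plan is to prove the cyclic chain of implications (ii) $\Rightarrow$ (iii) $\Rightarrow$ (i) $\Rightarrow$ (ii), leveraging the Butler-Ringel classification of non-projective indecomposables as string or band modules together with the explicit description of almost split sequences in \cite{butlerRingel}, and the preceding domestic/polynomial-growth dichotomy.

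For (ii) $\Rightarrow$ (iii), I would first observe that each band $b$ produces a one-parameter family of band modules $M(b, \lambda, n)$ with $\lambda \in K^*$ and $n \geq 1$, whose union forms a single homogeneous tube $\mathbb{Z}A_\infty/\langle\tau\rangle$. Infinitely many bands therefore immediately yield infinitely many homogeneous tubes. For string modules, the shape of the component containing $M(w)$ is governed by the hook/cohook iterations displayed in Figure~\ref{ARRays}: when both end-operations iterate indefinitely and the resulting $\tau$-orbits remain pairwise disjoint, the component is $\mathbb{Z}A_\infty^\infty$, whereas closure of an orbit after $n > 1$ steps yields an exceptional tube $\mathbb{Z}A_\infty/\langle\tau^n\rangle$. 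A combinatorial argument using the special biserial relations then shows that only finitely many strings can produce such closures — these are essentially strings that ``almost become'' bands — while one can construct infinitely many pairwise non-$\tau$-related strings (for instance by inserting powers of distinct bands as substrings) belonging to infinitely many distinct $\mathbb{Z}A_\infty^\infty$-components.

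The implication (iii) $\Rightarrow$ (i) is essentially immediate: a polynomial-growth self-injective tame algebra admits only tubes and Euclidean components in its stable Auslander-Reiten quiver (by the classification in the previous theorem, or by the general theory of representation types of self-injective tame algebras), so the presence of any $\mathbb{Z}A_\infty^\infty$-component is incompatible with polynomial growth. For (i) $\Rightarrow$ (ii), I would combine with the preceding theorem, whose statement shows that polynomial growth coincides with the representation-infinite domestic case (or finite representation type). Failure of polynomial growth thus forces non-domesticness, and a standard parameter count — together with the fact that each band contributes exactly one one-parameter family of modules — shows that non-domesticness for a special biserial algebra is equivalent to the existence of infinitely many bands.

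The main obstacle I anticipate lies in the second step of (ii) $\Rightarrow$ (iii), namely the verification that infinitely many inequivalent strings produce genuinely distinct $\mathbb{Z}A_\infty^\infty$-components when there are infinitely many bands. One must show both that the hook/cohook rays attached to these strings do not close up into tubes already accounted for by bands, and that their $\tau$-orbits are pairwise distinct. This requires a delicate use of the string/hook machinery outlined in Section~\ref{Algorithm} and a careful analysis of how substrings built from distinct bands interact with the admissible relations in $I$.
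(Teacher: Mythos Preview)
The paper does not prove this theorem at all: it is stated with the attribution \texttt{[\textbackslash cite\{ErdmannAR\}]} and quoted verbatim from Erdmann--Skowro\'{n}ski as background for the results of Section~\ref{Components}. There is therefore no proof in the paper to compare your proposal against.

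That said, your sketch is broadly in the spirit of the original Erdmann--Skowro\'{n}ski argument, but you should be aware that the hard content really is where you flag it. In particular, the claim that only finitely many strings can lie in exceptional tubes is not a light ``combinatorial argument using the special biserial relations''; in the self-injective case it rests on identifying the mouth modules of the exceptional tubes with a specific finite set (essentially the set $\mathcal{M}$ used later in this paper), and this is what bounds their number. Your proposed construction of infinitely many $\mathbb{Z}A_\infty^\infty$ components by ``inserting powers of distinct bands as substrings'' is also more delicate than you suggest: one must verify that the resulting string modules are not $\tau$-translates of one another and do not accidentally fall into the same component, which requires control over how $\tau$ acts on the full string rather than just on the inserted band portion. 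If you intend to actually write out this proof rather than cite it, those two points are where the real work lies.
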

	
	Note that the $n$ in Theorem~\ref{NonDomARComp}(iii) can vary. That is, in a non-domestic self-injective special biserial algebra $A$, there exist integers $n_1,\ldots, n_r>1$ such that $_s\Gamma_A$ contains components of the form $\mathbb{Z}A_\infty/\left<\tau^{n_i}\right>$ for each $i$. For the purposes of this paper, we will distinguish between the exceptional tubes of rank 1 -- that is, the tubes of rank 1 consisting of string modules, of which there are finitely many -- and the homogeneous tubes of rank 1, which consist solely of band modules.
	
	It follows from the above that $A$ is domestic if and only if there are finitely many distinct bands in $A$. If $A$ contains $n$ distinct bands, then we refer to $A$ as an $n$-domestic algebra. In \cite{bocianSkow}, Bocian and Skowro\'{n}ski described the symmetric special biserial algebras that are domestic. Specifically, those that are 1-domestic are associated to a Brauer graph that is either a tree with precisely two exceptional vertices of multiplicity 2 and with all other vertices of multiplicity 1, or a graph with a unique simple cycle of odd length and with all vertices of multiplicity 1. Those that are 2-domestic are associated to a Brauer graph with a unique simple cycle of even length and with all vertices of multiplicity 1. All other graphs (with the exception of Brauer trees) produce non-domestic algebras.
	
	% --------------------------
	% EXCEPTIONAL TUBES
	% --------------------------
	\subsection{Exceptional tubes} \label{ExcepTubesSec}
	We are interested in counting the number of exceptional tubes in $_s\Gamma_A$. To do this we must use a definition from \cite{Roggenkamp} for a set of modules, which will soon be of particular importance.
	
	\begin{defn}[\cite{Roggenkamp}]
		Let $\mathcal{M}$ be the set of all simple modules whose projective covers are uniserial and all maximal uniserial submodules of indecomposable projective $A$-modules.
	\end{defn}
	
	We note from \cite{Roggenkamp} that $\Omega(M) \in \mathcal{M}$ for all $M \in \mathcal{M}$ and the minimal projective resolutions of $M \in \mathcal{M}$ are periodic, which will be required in the proof of the following.
	
	\begin{lem} \label{tubeMouth}
		Let $A$ be a Brauer graph algebra. An indecomposable string module $M$ is at the mouth of a tube in the stable Auslander-Reiten quiver $_s\Gamma_A$ of $A$ if and only if $M \in \mathcal{M}$.
	\end{lem}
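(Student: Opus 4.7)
The plan is to translate the defining properties of $\mathcal{M}$ into the string-combinatorial language of Section~\ref{Algorithm}, and then to prove both directions using the hook/cohook calculus (A1)--(A5) together with Figure~\ref{ARRays}. Since $P(x)$ is uniserial precisely when $x$ is a truncated edge of $G$, the simple modules in $\mathcal{M}$ are exactly the $M(\varepsilon_x)$ with $x$ truncated; and by Remark~\ref{MaxString}, each maximal uniserial submodule of an indecomposable projective is isomorphic to $M(w)$ for some maximal direct string $w = \gamma^{\mathfrak{e}_v - 1}\gamma_1\ldots\gamma_{n-1}$. So the lemma reduces to showing that $M(w)$ is at the mouth of a tube of $_s\Gamma_A$ if and only if $w$ is a string of one of these two forms.

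For the $(\Leftarrow)$ direction, $\Omega$-periodicity of $\mathcal{M}$ from \cite{Roggenkamp} together with the identification $\tau \cong \Omega^2$ on the stable module category of the symmetric algebra $A$ yields $\tau$-periodicity of every $M \in \mathcal{M}$, so $M$ lies in some stable tube. It then remains to verify that $M$ sits at the mouth, equivalently that the pairs $\{M(w_+), M(w'_+)\}$ and $\{M(w_-), M(w'_-)\}$ of Figure~\ref{ARRays} each collapse to a single module in $_s\Gamma_A$. If $w = \varepsilon_x$ with $x$ truncated, uniseriality of $P(x)$ gives exactly one arrow of $Q$ with source $x$ and one with target $x$, so that the formulas for adding hooks and cohooks at the two ends of $w$ produce strings that are formal inverses of one another; hence $w_h$ and $_hw$ represent the same string module, as do $w_c$ and $_cw$, and both pairs collapse. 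If $w$ is a maximal direct string, Lemma~\ref{PeakDeep} determines the peak/deep status at each endpoint in terms of the truncation status of $s(w)$ and $e(w)$, and in each of the resulting four subcases a short computation shows that two of the four operations either (i) are undefined because $w$ is entirely direct (no cohook to delete at the start, no hook to delete at the end) or (ii) produce the direct string of a uniserial indecomposable projective $P(s(w))$ or $P(e(w))$, which is stripped in $_s\Gamma_A$. Either way, exactly one outgoing and one incoming neighbor of $M(w)$ survives in $_s\Gamma_A$, placing $M$ at the mouth.

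For the $(\Rightarrow)$ direction I proceed by contrapositive, assuming $w$ is neither a zero string at a truncated edge nor a maximal direct string, and showing that the four modules $M(w_\pm), M(w'_\pm)$ represent four distinct non-projective indecomposables, so the AR sequence has two middle summands and $M(w)$ is not at the mouth. The underlying reason is that $w_+, w_-$ depend only on the local cycle structure near $e(w)$ while $w'_+, w'_-$ depend only on that near $s(w)$, and these two independent pieces of data cannot coincide (even up to formal inversion) unless $w$ exhibits the global symmetry characterizing the two forms comprising $\mathcal{M}$. The main obstacle is the bookkeeping of this case analysis: ruling out spurious coincidences for zero strings at non-truncated edges, for direct strings that are not maximal, and for mixed (non-direct, non-inverse) strings. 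Each such case is excluded by a direct inspection using (A1)--(A5), Remark~\ref{HEStartEnd}, and the peak/deep criteria of Lemma~\ref{PeakDeep}, which together pin down the precise strings $w_\pm, w'_\pm$ and verify they cannot collide.
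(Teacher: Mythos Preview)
Your $(\Rightarrow)$ direction is essentially the paper's argument: both show by contrapositive that if $M(w)\notin\mathcal{M}$ then the Auslander--Reiten sequence starting in $M(w)$ has two non-projective middle terms, via the same case split (non-uniserial; simple with biserial envelope; uniserial but not maximal). A minor imprecision: you phrase the goal as ``four distinct'' modules $M(w_\pm), M(w'_\pm)$, but what is actually needed (and what the paper proves) is only that the outgoing pair $\{M(w_+),M(w'_+)\}$ has two elements.

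Your $(\Leftarrow)$ direction takes a genuinely different route. The paper constructs the Auslander--Reiten sequence explicitly: for $M=\rad P$ with $P$ uniserial it is the standard sequence with middle term $\rad P/\soc P \oplus P$, and otherwise the paper uses $\Omega^{-1}(\mathcal{M})\subseteq\mathcal{M}$ from \cite{Roggenkamp} to exhibit a sequence $0\to M(w_0)\to M(w_0\beta^{-1}w_2)\to M(w_2)\to 0$, recognising the middle string simultaneously as $w_0$ with a hook added and $w_2$ with a cohook added. You instead argue via the hook/cohook calculus that two of the four neighbour-operations collapse. This works: for a maximal direct $w$, Lemma~\ref{PeakDeep} gives that $w$ ends in a deep and starts on a peak, so $w_-=w_{-h}$ and $w'_+={_{-c}w}$ are both undefined on a purely direct string (your case (i)), leaving exactly one incoming and one outgoing neighbour. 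Your case~(ii) about producing a uniserial projective is only relevant under the convention that $(\mathfrak{C}_v)^{\mathfrak{e}_v}$ is retained as a string, and even then only in the subcase $e(w)$ truncated (where ${_hw}$ is the string of $P(e(w))$); under the standard Butler--Ringel convention of working modulo socles of projective-injectives, case~(i) already suffices. The paper's explicit construction avoids this convention-sensitivity and has the bonus of identifying the unique non-projective middle term, which feeds directly into the Green-walk description underlying Theorem~\ref{tubeWalks}.
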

	\begin{proof}
		$(\Rightarrow:)$ Let $M$ be at the mouth of a tube in $_s\Gamma_A$. Then there exists precisely one irreducible morphism $M \rightarrow N$ in $_s\Gamma_A$ for some indecomposable $A$-module $N$. Suppose for a contradiction that $M$ is not uniserial and let $w=\alpha_1\ldots\alpha_m$ be the string such that $M(w)=M$. Then $w$ is not direct (or inverse). If $w$ ends on a peak, then deleting a cohook from the end of $w$ gives a string $w_{-c}$. Note that in the case where $w$ consists only of an arrow followed by an inverse string, then $w_{-c}$ is a zero string and $M(w_{-c})$ is simple. Similarly, if $w$ starts on a peak then deleting a cohook from the start of $w$ gives a string $_{-c}w$ such that $M(_{-c}w)$ is indecomposable. By the results of \cite{butlerRingel}, the Auslander-Reiten sequence starting in $M$ therefore has two non-projective middle terms. This implies that there are two irreducible morphisms of source $M$ in $_s\Gamma_A$ -- a contradiction to our assumption that $M(w)$ is not uniserial. It is easy to see that the same contradiction occurs in the cases where $w$ does not start or end on a peak. Therefore $w$ must be direct (or inverse) and $M$ is uniserial.
		
		Suppose $M$ is a simple module corresponding to the stationary path $\varepsilon_x$ in $Q$. Then the injective envelope $I(M)$ must be uniserial, since if $I(M)$ is biserial, then there are two distinct arrows of target $x$ in $Q$ and hence two non-projective middle terms in the Auslander-Reiten sequence starting in $M$. Thus, if $M$ is a simple module at the mouth of a tube in $_s\Gamma_A$ then $M \in \mathcal{M}$.
		
		Now suppose $M$ is not simple. For a contradiction, suppose that $M \not\in \mathcal{M}$. Note that this implies that $M$ is not the radical of a uniserial indecomposable projective. Let $w$ be the inverse string such that $M(w)=M$. Then $M$ is not maximal and therefore the inverse string $w$ is not maximal, which implies $w$ does not end on a peak. Hence, the Auslander-Reiten sequence starting in $M$ has an indecomposable middle term $M(w_h)$. If $w$ does not start on a peak, then the there exists another middle term $M({_hw})$ in the Auslander-Reiten sequence. Otherwise, if $w$ starts on a peak, then $_{-c}w=\alpha_2\ldots\alpha_m$. Thus, the Auslander-Reiten sequence starting in $M$ must have two non-projective middle terms by the results of \cite{butlerRingel} -- a contradiction. So $M \in \mathcal{M}$.
		
		$(\Leftarrow:)$ Suppose $M \in \mathcal{M}$. We will show that the Auslander-Reiten sequence starting in $M$ has precisely one non-projective middle term. If $M$ is the radical of a uniserial indecomposable projective-injective $P$ then the Auslander-Reiten sequence starting in $M$ is of the form
		\begin{equation*}
			\xymatrix@1{0 \ar[r] & M \ar[r] & M / \soc(P) \oplus P \ar[r] & P / \soc(P) \ar[r] & 0}
		\end{equation*}
		and $M / \soc(P)$ is indecomposable since $P$ is uniserial. Thus, there exists precisely one irreducible morphism of source $M / \soc(P)$ in $_s\Gamma_A$, as required. So suppose instead $M \in \mathcal{M}$ is not the radical of a uniserial indecomposable projective and let $w_0$ be the direct string such that $M(w_0)=M$. By \cite{Roggenkamp}, $\Omega^{-1}(M) \in \mathcal{M}$. So $\Omega^{-1}(M)$ is associated to a maximal direct string $w_1$ and $s(w_1)=e(w_0)$, which follows from the fact that $A$ is symmetric and $\Omega^{-1}(M)$ is a maximal uniserial quotient of $I(M)$. Also note that $\Omega^{-1}(M)$ must be non-simple, since this would otherwise contradict our assumption that $M$ is not the radical of a uniserial indecomposable projective. Thus, there exists an arrow $\beta \in Q_1$ such that $s(\beta)=e(w_1)$ and $e(\beta)=e(w_0)=s(w_1)$. Now let $w_2$ be the (maximal) direct string associated to $\Omega^{-2}(M) \in \mathcal{M}$. Then by a similar argument used for the strings $w_0$ and $w_1$, we have $s(w_2)=e(w_1)$. Thus, there exists a string $w_0\beta^{-1}w_2$ and the sequence
		\begin{equation*}
			\xymatrix@1{0 \ar[r] & M(w_0) \ar[r]^-f & M(w_0\beta^{-1}w_2) \ar[r]^-g & M(w_2) \ar[r] & 0}
		\end{equation*}
		is exact. Note that in the case that $M$ (resp. $\Omega^{-2}(M)$) is simple, the string $w_0$ (resp. $w_2$) is a zero string. Now $w_0\beta^{-1}w_2$ is obtained from $w_0$ by adding a hook and $w_0\beta^{-1}w_2$ is obtained from $w_2$ by adding a cohook. So $f$ and $g$ are irreducible and hence, the above sequence is an Auslander-Reiten sequence.
	\end{proof}
	
	To each edge $x$ in a Brauer graph $G$, we can associate precisely two modules $M_1,M_2 \in \mathcal{M}$. These two modules have the property that $\tp(M_1)=\tp(M_2)=x$. For each half-edge $x^v$ associated to $x$ and incident to a non-truncated vertex $v$, we have a string module $M(w) \in \mathcal{M}$ such that $w=(\gamma_1\ldots\gamma_n)^{\mathfrak{e}_v-1}\gamma_1\ldots\gamma_{n-1}$, where $\mathfrak{C}_{v,\gamma_1}=\gamma_1\ldots\gamma_n$ and $\widehat{s}(\gamma_1)=x^v$. If $x$ is incident to a truncated vertex $u$, then then $P(x)$ is uniserial and we associate to $x^u$ the module $S(x) \in \mathcal{M}$. Thus, each half-edge in $G$ corresponds (bijectively) to a module in $\mathcal{M}$.
	
	We further recall from \cite{Roggenkamp} that $\Omega(M) \in \mathcal{M}$ for all $M \in \mathcal{M}$ and that the minimal projective resolution of a module in $\mathcal{M}$ follows a Green walk (described in \cite{GreenWalk}) of the Brauer graph. Specifically, it follows from \cite[Remark 3.6]{Roggenkamp} that if $M \in \mathcal{M}$ corresponds to a half-edge $x^v$ in $G$ (in the sense described above) and the $i$-th step along a Green walk from $\overline{x^v}$ is a half-edge $\overline{x_i^{v_i}}$, then $\Omega^i(M) \in \mathcal{M}$ corresponds to the half-edge $x_i^{v_i}$.
	 
	 The relation $M \sim N$ for $M,N \in \mathcal{M} \Leftrightarrow N=\Omega^{2i}(M)$ for some $i$ is an equivalence relation. Thus, we can partition the set $\mathcal{M}$ of a Brauer graph algebra into orbits described by the distinct double-stepped Green walks of the Brauer graph (a similar relation can be constructed for single-stepped Green walks, but this is not of interest to us). Since Brauer graph algebras are symmetric, which implies $\tau=\Omega^2$, a consequence of Lemma~\ref{tubeMouth} is the following.
	 
	 \begin{thm} \label{tubeWalks}
	 	Let $A$ be a representation-infinite Brauer graph algebra with Brauer graph $G$ and let $_s\Gamma_A$ be its stable Auslander-Reiten quiver. Then
	 	\begin{enumerate}[label=(\alph*)]
	 		\item there is a bijective correspondence between the exceptional tubes in $_s\Gamma_A$ and the distinct double-stepped Green walks of $G$; and
	 		\item the rank of an exceptional tube is given by the length of the corresponding double-stepped Green walk of $G$.
	 	\end{enumerate}
	 \end{thm}

	% ---------------------
	% DOMESTIC BGAS
	% ---------------------
	\subsection{Domestic Brauer graph algebras}
	For Brauer graph algebras that are infinite-domestic, the above theorem allows us to determine the precise shape of the $\mathbb{Z}\tilde{A}_{p,q}$ components. 
	
	\begin{thm}
		Let $A$ be a Brauer graph algebra constructed from a graph $G$ of $n$ edges and suppose $_s\Gamma_A$ has a $\mathbb{Z}\tilde{A}_{p,q}$ component.
		\begin{enumerate}[label=(\alph*)]
			\item If $A$ is 1-domestic, then $p+q=2n$.
			\item If $A$ is 2-domestic, then $p+q=n$.
		\end{enumerate}
		Furthermore, if $G$ is a tree, then $p=q=n$.
	\end{thm}
	\begin{proof}
		Suppose $A$ is a 1-domestic Brauer graph algebra constructed from a tree. For any tree, there is only one possible single-stepped Green walk. This walk steps along each edge exactly twice -- that is, we walk along both sides of each edge -- and thus, the walk is of length $2n$, which is even. This therefore amounts to two distinct double-stepped walks of length $n$. By Theorem~\ref{tubeWalks} and \cite[Theorem~2.1]{ErdmannAR}, $p=q=n$.
		
		Suppose instead we have a Brauer graph consisting solely of a simple cycle of odd length, say $l$. Then there are two distinct single-stepped Green walks of odd length $l$ -- one along the `inside' of the cycle and one along the `outside'. Since Green walks are periodic, this amounts to two distinct double-stepped Green walks of length $l$. Inserting an additional edge to any vertex in the graph will add another two steps to one of the two distinct single-stepped Green walks, and thus, an additional two steps along one of the two distinct double-stepped Green walks. Proceeding inductively, the result for (a) follows.
		
		The argument for (b) is similar, except our initial simple cycle of length $l$ is even. The two distinct single-stepped Green walks are then of even length, and hence each split into a pair of distinct double-stepped Green walks of length $\frac{l}{2}$, making a total of four distinct double-stepped Green walks.  Inserting an additional edge to any vertex in the graph will add a single additional step along a pair of the four distinct double-stepped Green walks. Proceeding inductively, the result follows.
	\end{proof}
	
	The above proof implies that by distinguishing between the two sides of a cycle (an `inside' and an `outside') in a domestic Brauer graph algebra, one can actually read off $p$ and $q$ directly. To be more precise, if a Brauer graph $G$ contains a unique simple cycle, then $G$ will have two distinct (single-stepped) Green walks. When $G$ is presented as a planar graph, the half-edges along one Green walk will appear to be inside the cycle, and the half-edges along the other Green walk will appear to be outside the cycle. For example, see Figure~\ref{walkEx}. For an edge that is not in the cycle of $G$, we can say that it is an additional edge on the inside (resp. outside) of the cycle if its associated half-edges occur along the Green walk inside (resp. outside) the cycle.
	
	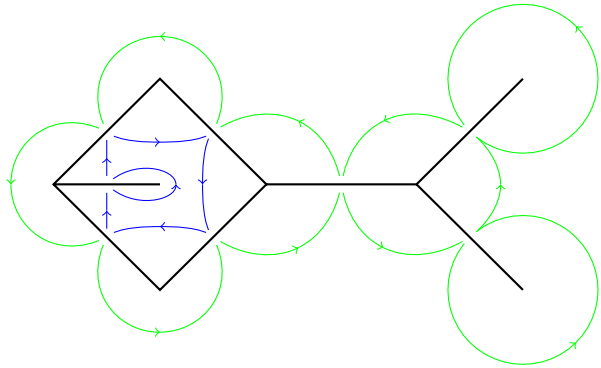
\begin{figure}[h]
		\centering
		\begin{tikzpicture}[scale=0.7]
			\draw [green] (1,1) .. controls (0.5,2) and (1.2,2.8)  .. (2,2.8);
			\draw [green][->] (3,1) .. controls (3.5,2) and (2.8,2.8)  .. (2,2.8);
			\draw [green](3,1) .. controls (3.6,1.4) and (4.2,1.4)  .. (4.6,1.2);
			\draw [green][->] (5.41,0) .. controls (5.3,0.6) and (5,1)  .. (4.6,1.2);
			\draw [green](5.41,0) .. controls (5.5,0.6) and (5.8,1)  .. (6.2,1.2);
			\draw [green][->] (7.82,1) .. controls (7.2,1.4) and (6.6,1.4)  .. (6.2,1.2);
			\draw [green][->] (7.82,-1) .. controls (8.1,-0.8) and (8.4,-0.4)  .. (8.4,0);
			\draw [green](7.82,1) node (v8) {} .. controls (8.1,0.8) and (8.4,0.4)  .. (8.4,0);
			\draw [green][->] (5.41,0) .. controls (5.5,-0.6) and (5.8,-1)  .. (6.2,-1.2);
			\draw [green](7.82,-1) node (v7) {} .. controls (7.2,-1.4) and (6.6,-1.4)  .. (6.2,-1.2);
			\draw [green][->] (1,-1) .. controls (0.5,-2) and (1.2,-2.8)  .. (2,-2.8);
			\draw [green](3,-1) .. controls (3.5,-2) and (2.8,-2.8)  .. (2,-2.8);
			\draw [green][->] (3,-1) .. controls (3.6,-1.4) and (4.2,-1.4)  .. (4.6,-1.2);
			\draw [green](5.41,0) node (v6) {} .. controls (5.3,-0.6) and (5,-1)  .. (4.6,-1.2);
			\draw [green] (1,-1) .. controls (0,-1.5) and (-0.8,-0.8)  .. (-0.8,0);
			\draw [green][<-](-0.8,0) .. controls (-0.8,0.8) and (0,1.5)  .. (1,1);
			\draw [green][<-] [domain=45:-135] plot ({8.82+1.41*cos(\x)}, {2+1.41*sin(\x)});
			\draw [green][domain=225:45] plot ({8.82+1.41*cos(\x)}, {2+1.41*sin(\x)});
			\draw [green][<-] [domain=-45:-225] plot ({8.82+1.41*cos(\x)}, {-2+1.41*sin(\x)});
			\draw [green][domain=135:-45] plot ({8.82+1.41*cos(\x)}, {-2+1.41*sin(\x)});

			\draw [blue](3,-1) .. controls (2.8,-0.8) and (2.8,-0.2) .. (2.8,0);
			\draw [blue][->] (3,1) .. controls (2.8,0.8) and (2.8,0.2) .. (2.8,0);
			\draw [blue](1,-1) .. controls (1.2,-0.8) and (1.8,-0.8) .. (2,-0.8);
			\draw [blue][->](3,-1) node (v3) {} .. controls (2.8,-0.8) and (2.2,-0.8) .. (2,-0.8);
			\draw [blue][->](1,1) .. controls (1.2,0.8) and (1.8,0.8) .. (2,0.8);
			\draw [blue](3,1) node (v4) {} .. controls (2.8,0.8) and (2.2,0.8) .. (2,0.8);
			\draw [blue][->] (1,-1) node (v2) {} -- (1,-0.5);
			\draw [blue][->] (1,-0.5) -- (1,0.5);
			\draw [blue](1,0.5) -- (1,1) node (v1) {};
			\draw [blue][->] (1,0) .. controls (1.5,-0.5) and (2.3,-0.3) .. (2.3,0);
			\draw [blue](1,0) node (v5) {} .. controls (1.5,0.5) and (2.3,0.3) .. (2.3,0);
			
			\draw [white,fill=white] (v1) ellipse (0.15 and 0.15);
			\draw [white,fill=white] (v2) ellipse (0.15 and 0.15);
			\draw [white,fill=white] (v3) ellipse (0.15 and 0.15);
			\draw [white,fill=white] (v4) ellipse (0.15 and 0.15);
			\draw [white,fill=white] (v5) ellipse (0.15 and 0.15);
			\draw [white,fill=white] (v6) ellipse (0.15 and 0.15);
			\draw [white,fill=white] (v7) ellipse (0.15 and 0.15);
			\draw [white,fill=white] (v8) ellipse (0.15 and 0.15);
			
			\draw[thick] (0,0) -- (2,2) -- (4,0) -- (2,-2) -- cycle
			 	(0,0) -- (2, 0)	
				(4,0) -- (6.82,0) -- (8.82,2)
				(6.82,0) -- (8.82,-2);
		\end{tikzpicture}
		\caption{A 2-domestic Brauer graph algebra with distinct Green walks on the inside (blue) and outside (green) of the cycle in the Brauer graph.} \label{walkEx}
	\end{figure}
	
	\begin{cor} \label{TubeType}
		 For a domestic Brauer graph algebra, if the Brauer graph contains a unique simple cycle of length $l$ and there are $n_1$ additional edges on the inside of the cycle and $n_2$ additional edges along the outside, then the $\mathbb{Z}\tilde{A}_{p,q}$ components are given by
		\begin{equation*}
			p=
			\begin{cases}
				l+2n_1 & l \text{ odd,} \\
				\frac{l}{2}+n_1 & l \text{ even,}
			\end{cases}
			\quad \text{and} \quad
			q = 
			\begin{cases}
				l+2n_2 & l \text{ odd,} \\
				\frac{l}{2}+n_2 & l \text{ even.}
			\end{cases}
		\end{equation*}
	\end{cor}
	
	\begin{exam}
		In Figure~\ref{walkEx}, we have $l=4$, $n_1 = 1$ and $n_2 = 3$. So the Auslander-Reiten quiver has two components $\mathbb{Z}\tilde{A}_{3,5}$, two components $\mathbb{Z}A_\infty/\left<\tau^3\right>$, two components $\mathbb{Z}A_\infty/\left<\tau^5\right>$ and two infinite families of homogeneous tubes.
	\end{exam}
	
	% -----------------------------------------------------------
	% THE LOCATIONS OF SIMPLES AND PROJECTIVES
	% -----------------------------------------------------------
	\subsection{The locations of simples and projectives} \label{LocSimpleProj}
		It is possible to determine exactly which simple modules belong to the exceptional tubes of $_s\Gamma_A$ by looking at the Brauer graph. Similarly, one can also determine the location in the stable Auslander-Reiten quiver of the radical of a projective $P$ by the edge associated to $P$ in the Brauer graph.
	
	In \cite{butlerRingel}, the irreducible morphisms in $_s\Gamma_A$ between string modules is described. In order to consider whether a string module $M(w)$ belongs to an exceptional tube, we will need to look at the rays in the Auslander-Reiten quiver of source and target $M(w)$, of which there are at most two each. If at least one of these rays terminates, then $M(w)$ must belong to an exceptional tube. Otherwise, if all rays are infinite, then $M(w)$ must belong to either a $\mathbb{Z}\tilde{A}_{p,q}$ component (if $A$ is domestic) or a $\mathbb{Z}A_\infty^\infty$ component (if $A$ is non-domestic). Therefore in what follows, we will need to consider the bi-directional sequence $(w_j)_{j \in \mathbb{Z}}$ defined by
	\begin{equation} \tag{$\ast$} \label{StrSeq}
		\begin{split}
			w_{j+1}&=
			\begin{cases}
				(w_j)_{-c} & \text{if } w_j \text{ ends on a peak,} \\
				(w_j)_h & \text{otherwise,} 
			\end{cases} \\
			w_{j-1}&=
			\begin{cases}
				(w_j)_{-h} & \text{if } w_j \text{ ends in a deep,} \\
				(w_j)_c & \text{otherwise,}
			\end{cases}
		\end{split}
	\end{equation}
	with $w_0=w$. We will refer to this sequence frequently throughout this subsection of the paper. Any string module $M(w_i)$ then lies along the line through $M(w)$ in $_s\Gamma_A$ given by adding or deleting from the end of $w$. This is precisely the south west to north east line through $M(w)$ illustrated in Figure~\ref{ARRays} at the end of Section~\ref{Algorithm}. To obtain the line through $M(w)$ in $_s\Gamma_A$ given by adding or deleting from the start of $w$ (the north west to south east line through $M(w)$ illustrated in Figure~\ref{ARRays}), we need only consider the sequence (\ref{StrSeq}) with $w_0 = w^{-1}$.
	
	Note that in the case where $w_0$ is the zero string $\varepsilon_x$ at a non-truncated edge $\xymatrix@1{u \ar@{-}[r]^x & v}$, there exists an ambiguity in the sequence (\ref{StrSeq}), as there are two possible ways in which we can add a hook (resp. cohook) to $w_0$. We may either add a hook (resp. cohook) starting with the formal inverse (resp. arrow) $\alpha$ such that $\widehat{s}(\alpha)=x^u$, or with the formal inverse (resp. arrow) $\alpha'$ such that $\widehat{s}(\alpha')=x^v$. In this case, we must define either $w_1$ or $w_{-1}$. The following shows that it is sufficient to define only one of these terms.
	
	\begin{lem} \label{SimpleRay}
		Let $A=KQ/I$ be a Brauer graph algebra associated to a Brauer graph $G$, and let $x^v$ be a half-edge associated to an edge $x$ incident to a vertex $v$ in $G$. Suppose there exists a segment
		\begin{equation*}
			\cdots \rightarrow M(w) \rightarrow S(x) \rightarrow M(w')  \rightarrow \cdots
		\end{equation*}
		of a line $L$ in $_s\Gamma_A$ for some strings $w=(\varepsilon_x)_{c}$ and $w'=(\varepsilon_x)_{h}$. If the first symbol $\alpha$ of $w$ is such that $\widehat{s}(\alpha)=\overline{x^v}$, then the first symbol $\beta^{-1}$ of $w'$ is such that $\widehat{s}(\beta^{-1})=x^v$.
	\end{lem}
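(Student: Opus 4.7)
The strategy is to compute $\tau M(w')$ via the Auslander-Reiten sequence ending at $M(w')$ and then apply the mesh identification $\tau M(w') = M((\varepsilon_x)'_-)$ coming from Figure~\ref{ARRays} with central module $S(x)$. Since for any source-$x$ arrow $\alpha'$ the strings $(\varepsilon_x)_c$ and ${_c\varepsilon_x}$ built with $\alpha'$ are mutually inverse, and similarly for hook arrows, the four distinct neighbours of $S(x)$ in $_s\Gamma_A$ distribute the two source-$x$ arrows and the two target-$x$ arrows at $x$ two per line through $S(x)$. Identifying the cohook arrow used on the other line $L' \neq L$ therefore pins down by elimination the cohook arrow used on $L$.

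Now consider the Auslander-Reiten sequence
\[
0 \to \tau M(w') \to M((w')_-) \oplus M((w')'_-) \to M(w') \to 0.
\]
Since $w' = \beta^{-1} u_\beta^{-1}$ is a hook and hence ends in a deep, $(w')_- = (w')_{-h} = \varepsilon_x$, giving first summand $S(x)$. Under the generic hypothesis that both vertices incident to $x$ are non-truncated (so that $w'$ does not start on a peak), $(w')'_- = {_c w'} = v_\epsilon^{-1} \epsilon^{-1} \beta^{-1} u_\beta^{-1}$ for some source-$x$ arrow $\epsilon$. The key point is that the length-two inverse substring $\epsilon^{-1} \beta^{-1}$ appearing in this string forces, via Remark~\ref{HEStartEnd}, the identity $\widehat{s}(\epsilon) = \widehat{e}(\beta)$. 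Since precisely one of the two source-$x$ arrows has a source half-edge matching $\widehat{e}(\beta)$, this uniquely determines $\epsilon$.

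To conclude, identify $\tau M(w')$ as a string module by computing the kernel of the AR map $(\iota, \pi)\colon S(x) \oplus M({_c w'}) \to M(w')$, where $\iota$ embeds $S(x)$ as the socle element at the initial vertex $x$ of the hook $M(w')$ and $\pi$ is the surjection obtained by quotienting out the prepended cohook portion $v_\epsilon^{-1}\epsilon^{-1}$. A direct coordinate-level calculation identifies this kernel with the submodule of $M({_c w'})$ supported on the vertices corresponding to the initial substring $v_\epsilon^{-1}\epsilon^{-1}$; as a string module this equals $M({_c\varepsilon_x})$ built with $\epsilon$, and by string inversion coincides with $M((\varepsilon_x)_c)$ built with the same arrow. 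Combining with the mesh relation, the cohook neighbour of $S(x)$ on line $L'$ uses the arrow $\epsilon$, and hence the cohook on line $L$ uses the remaining source-$x$ arrow, whose source half-edge is $\overline{\widehat{e}(\beta)}$. Consequently $\widehat{s}(\alpha) = \overline{x^v}$ forces $\widehat{e}(\beta) = x^v$, i.e.\ $\widehat{s}(\beta^{-1}) = x^v$, as required. The main subtlety will be the kernel computation, which reduces to checking that the initial cohook submodule of $M({_c w'})$ is closed under all arrow actions; this follows routinely from the string presentation, using that $\beta$ only acts non-trivially on the vertex lying outside this submodule.
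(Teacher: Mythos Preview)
Your approach is correct and is the natural dual of the paper's argument. Both proofs exploit the mesh relation at $S(x)$ to identify a neighbour on the other line $L'$; the paper computes $\tau^{-1}M(w)$ using the Auslander--Reiten sequence \emph{starting} at $M(w)$ (splitting into two cases according to whether $w$ starts on a peak, and invoking the Butler--Ringel middle-term formulas together with Remark~\ref{HEStartEnd}), whereas you compute $\tau M(w')$ as the kernel of the Auslander--Reiten sequence \emph{ending} at $M(w')$. Your route has the pleasant feature of avoiding the paper's case split: since a hook $w'=\beta^{-1}u_\beta^{-1}$ never starts in a deep (the arrow following $\beta$ in its cycle can always be prepended as a formal inverse), one always has $(w')'_{-}={_c w'}$, and the single kernel computation suffices. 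The paper's approach, on the other hand, stays entirely within the string-combinatorial formalism of \cite{butlerRingel} and does not require unpacking the module structure.

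One small correction: your parenthetical ``so that $w'$ does not start on a peak'' is not the relevant condition. The formula $(w')'_{-}={_c w'}$ requires that $w'$ does not start in a \emph{deep}, not that it does not start on a peak; these are different hypotheses. As noted above, the deep condition is automatic here, so your conclusion stands, but the justification should be adjusted. Also, your restriction to the case where both vertices of $x$ are non-truncated is harmless: when $x$ is truncated the lemma is vacuous, since there is then a unique arrow of source $x$ and the hypothesis $\widehat{s}(\alpha)=\overline{x^v}$ cannot be arranged consistently with the existence of the segment on a line.
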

	\begin{proof}
		There exist two irreducible morphisms of source $S(x)$ in $_s\Gamma_A$. Suppose $S(x) \rightarrow M(w'')$ is an irreducible morphism such that $M(w'') \not\cong M(w')$. Then $w''={_h(\varepsilon_x)}$ and the morphism $S(x) \rightarrow M(w'')$ does not belong to the line $L$. In particular, $M(w'')=\tau^{-1} M(w)$. Let $\gamma$ be the last symbol of $w''$ and let $\beta^{-1}$ be the first symbol of $w'$. It is sufficient to show that $\widehat{e}(\gamma)=\overline{x^v}$, since it then follows that $\widehat{s}(\beta^{-1}) = x^v$.
		
		To calculate $w''$, we will investigate the Auslander-Reiten sequences starting in $M(w)$. Let $y=e(\alpha)$. The string $w$ necessarily ends on a peak. Suppose $w$ also starts on a peak. Then $M(w)$ is a maximal submodule of $P(y)$ (since $w$ is obtained by adding a cohook to a zero string), so $M(w)=\rad P(y)$. Note that $P(y)$ must be biserial, since if $P(y)$ were uniserial then $P(y)$ would be isomorphic to $M(w_0\alpha)$ for some non-zero string $w_0$ (and hence, our assumption that $w$ starts on a peak would be false). Thus, there exists an Auslander-Reiten sequence
		\begin{equation*}
			0 \rightarrow \rad P(y) \rightarrow \rad P(y) / \soc P(y) \oplus P(y) \rightarrow P(y) / \soc P(y) \rightarrow 0.
		\end{equation*}
		with three middle terms (since $\rad P(y) / \soc P(y)$ is a direct sum of two uniserial modules) and $M(w'') = P(y) / \soc P(y)$. Let $w=\alpha\delta^{-1}_n\ldots\delta^{-1}_1$. Then necessarily, $w''=\delta^{-1}_{n-1}\ldots\delta^{-1}_1\delta^{-1}_0 \gamma$ for some formal inverse $\delta^{-1}_0$ and some arrow $\gamma$ such that $e(\delta^{-1}_0)=y=s(\gamma)$. Note that by Remark~\ref{HEStartEnd}, $\widehat{e}(\alpha)\neq\widehat{s}(\delta^{-1}_n)$, $\widehat{e}(\delta^{-1}_i)=\widehat{s}(\delta^{-1}_{i+1})$ and $\widehat{e}(\delta^{-1}_0)\neq\widehat{s}(\gamma)$. Since $\widehat{s}(\delta^{-1}_n)$, $\widehat{e}(\delta^{-1}_0)$, $\widehat{e}(\alpha)$ and $\widehat{s}(\gamma)$ are all half-edges associated to $y$ (of which there are precisely two) and $e(\gamma)=x=s(\alpha)$, we conclude that $\widehat{e}(\gamma)=\widehat{s}(\alpha)=\overline{x^v}$, as required.
		
		Suppose instead that $w$ does not start on a peak. Then by \cite{butlerRingel}, there exists an Auslander-Reiten sequence
		\begin{equation*}
			0 \rightarrow M(w) \rightarrow M(w_{-c}) \oplus M({_hw}) \rightarrow M({_{h}w_{-c}}) \rightarrow 0.
		\end{equation*}
		So $M(w'') \cong M({_{h}w_{-c}})$. Again let $\gamma$ be the last symbol of $w''$ and $\alpha$ be the first symbol of $w$. Then $\gamma\alpha$ is a direct substring of $_hw$. Thus, $\widehat{e}(\gamma)=\widehat{s}(\alpha)=\overline{x^v}$ by Remark~\ref{HEStartEnd}, as required. The result then follows.
	\end{proof}
	
	To prove the results of this section, it will be helpful to track the end of the strings along a ray of source or target a given module in $_s\Gamma_A$.
	
	\begin{prop} \label{MorphismGreenWalk}
		Let $A=KQ/I$ be a Brauer graph algebra associated to a Brauer graph $G$. Let $w_0$ be a string such that either $w_0$ is the zero string $\varepsilon_x$ or $w_0=\alpha_1\ldots\alpha_n$, where $\widehat{e}(\alpha_n)=x^v$ and $x^v$ is a half-edge associated to an edge $x$ connected to a vertex $v$ in $G$.
		\begin{enumerate}[label=(\alph*)]
			\item Suppose there exists a ray in $_s\Gamma_A$
			\begin{equation*}
				M(w_0) \rightarrow M(w_1) \rightarrow \ldots \rightarrow M(w_k) \rightarrow \cdots
			\end{equation*}
			such that $w_1$ is given by adding or deleting from the end of $w_0$.
			\begin{enumerate}[label=(\roman*)]
				\item If $\alpha_n \in Q^{-1}_1$ (resp. $\alpha_n \in Q_1$) then the edge $e(w_i)$ corresponds to the half-edge at the $i$-th step along a clockwise double-stepped Green walk from $x^v$ (resp. $\overline{x^v}$) for all $i \leq k$.
				\item If $w_0=\varepsilon_x$ and $w_1=\beta^{-1}\gamma_1\ldots\gamma_m$, where $\widehat{s}(\beta^{-1})=x^v$, then the edge $e(w_i)$ corresponds to the half-edge at the $i$-th step along a clockwise double-stepped Green walk from $x^v$ for all $i \leq k$.
			\end{enumerate}
	
			\item Suppose there exists a ray in $_s\Gamma_A$
			\begin{equation*}
				\cdots \rightarrow M(w_{-k}) \rightarrow \ldots \rightarrow M(w_{-1}) \rightarrow M(w_0)
			\end{equation*}
			such that $w_1$ is given by adding or deleting from the end of $w_0$.
			\begin{enumerate}[label=(\roman*)]
				\item If $\alpha_n \in Q_1$ (resp. $\alpha_n \in Q^{-1}_1$) then the edge $e(w_i)$ corresponds to the half-edge at the $i$-th step along a double-stepped Green walk from $x^v$ (resp. $\overline{x^v}$) for all $i \leq k$.
				\item If $w_0=\varepsilon_x$ and $w_{-1}=\beta\gamma^{-1}_1\ldots\gamma^{-1}_m$, where $\widehat{s}(\beta)=x^v$, then the edge $e(w_{-i})$ corresponds to the half-edge at the $i$-th step along a double-stepped Green walk from $x^v$ for all $i \leq k$.
			\end{enumerate}
		\end{enumerate}
	\end{prop}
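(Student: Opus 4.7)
The plan is to prove part (a) by induction on $i$, tracking how the endpoint edge $e(w_i)$ evolves at each step along the ray. By the sequence definition, the transition $w_{i-1} \to w_i$ is either ``add a hook'' (when $w_{i-1}$ does not end on a peak) or ``delete a cohook'' (when it does). In every case, the goal is to show that the underlying edge of $e(w_i)$ matches that of the $i$-th half-edge of the appropriate clockwise double-stepped Green walk. Part (b) then follows dually, with cohook additions and hook deletions producing the anticlockwise Green walk.

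The key local computation is for the hook-addition case. Suppose $w_{i-1}$ ends at half-edge $e^a$ with its last symbol a formal inverse. The hook $w_i = w_{i-1}\beta^{-1}u_\beta^{-1}$ decomposes, following the informal description of adding a hook to the end, as a clockwise formal inverse $\beta^{-1}$ around $a$ landing on the predecessor $e'^a$, followed by the maximal direct string around the other endpoint $a'$ of $e'$, terminating at $e''^{a'}$ where $e''$ is the predecessor of $e'$ at $a'$ (using Remark~\ref{MaxString}). Comparing with the clockwise Green walk from $e^a$, the first single-step produces $e'^{a'}$ and the second produces $e''^{a''}$, so the resulting double-step has underlying edge $e''$, matching $e(w_i)$. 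The case where the last symbol of $w_{i-1}$ is an arrow is handled analogously: by Remark~\ref{HEStartEnd} and the SB condition, the hook's initial formal inverse is forced to begin from the opposite half-edge $\overline{e^a}$, consistent with the walk starting from $\overline{x^v}$.

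The cohook-deletion case invokes Lemma~\ref{PeakDeep}(b) to describe how $w_{i-1}$ can end on a peak: either with an arrow at a truncated edge, or with a maximal inverse tail around some vertex. In the truncation sub-case, deleting the cohook removes exactly one arrow, and the Green walk ``reflects'' at the truncated vertex before advancing, so the endpoints match. In the maximal-inverse-tail sub-case, the deletion strips an arrow plus the entire maximal inverse tail (described via Remark~\ref{MaxString}), which again corresponds to two single Green-walk steps that navigate fully around the vertex. For the zero-string case $w_0 = \varepsilon_x$, the ambiguity in $w_{\pm 1}$ is resolved by specifying the starting half-edge $x^v$, and Lemma~\ref{SimpleRay} ensures that the two rays of source $S(x)$ split consistently into the two opposing Green walks required by (a)(ii) and (b)(ii).

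The main obstacle is the cohook-deletion step under Lemma~\ref{PeakDeep}(b)(ii), since there the entire maximal inverse tail must be stripped and the change of endpoint depends on the interplay of vertex multiplicity, valency, and any nearby truncations; a careful case analysis is required to confirm that the combined effect is exactly one double-step of the clockwise Green walk. A secondary bookkeeping issue is tracking the ``type'' (arrow or formal inverse) of the last symbol of $w_i$ along the ray, since this determines which operation applies at the next step; the induction must be organized so that this type information propagates consistently from each step to the next.
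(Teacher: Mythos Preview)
Your proposal is correct and follows essentially the same approach as the paper: an induction on $i$ with a case analysis based on the type of the last symbol of $w_i$ (arrow, formal inverse, or zero string) combined with whether $w_i$ ends on a peak, verifying in each case that the new endpoint matches one clockwise double-step of the Green walk. The paper organizes this into five explicit cases and phrases the inductive hypothesis as a trichotomy on the half-edge data of the last symbol (exactly the ``type'' bookkeeping you flag as the secondary issue), but the substance of the argument---including the use of Lemma~\ref{PeakDeep}(b), Remark~\ref{MaxString}, Remark~\ref{HEStartEnd}, and Lemma~\ref{SimpleRay} for the zero-string transition---is the same as what you outline.
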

	\begin{proof}
		(a) Let $x_0^{v_0}$ be a half-edge associated to the edge $x=x_0$ and label the $i$-th step along a double-stepped clockwise Green walk from $x_0^{v_0}$ as $x_i^{v_i}$. Let $\alpha$ be the last symbol of $w_i$. Assume that one of the following holds for the string $w_i$.
		\begin{enumerate}[label=(\roman*)]
			\item $\alpha \in Q^{-1}_1$ and $\widehat{e}(\alpha) = x_i^{v_i}$
			\item $\alpha \in Q_1$ and $\widehat{e}(\alpha) = \overline{x_i^{v_i}}$
			\item $w_i$ is the zero string $\varepsilon_{x_i}$
		\end{enumerate}
		We aim to show that the string $w_{i+1}$ satisfies the analogous properties of (i)-(iii). The proof of this claim requires us to investigate multiple cases related to the end of $w_i$.
		
		Case 1: $\alpha \in Q^{-1}_1$ and $w_i$ ends on a peak. By Lemma~\ref{PeakDeep}(b)(ii), there must exist a maximal inverse substring $w'=\gamma^{-1}_1\ldots\gamma^{-1}_r$ at the end of $w_i$. Moreover, it follows from the maximality of $w'$ that $\widehat{s}(\gamma^{-1}_1)=y^{v_i}$, where $y^{v_i}$ is the predecessor to $x_i^{v_i}$. Since $w_i$ ends on a peak, $w_{i+1}$ is given by deleting a cohook from the end of $w_i$. Thus, $w_i=w_{i+1}\beta w'$ for some arrow $\beta$ (which exists since otherwise $M(w_i) \in \mathcal{M}$ and $w_{i+1}$ would not be defined). Necessarily, $\widehat{s}(\beta)=z^u$, where $z^u$ is the predecessor to $\overline{y^{v_i}}$. This is illustrated in Figure~\ref{MorphWalk13}(a).
		
		The first two steps along a clockwise Green walk from $x_i^{v_i}$ are $\overline{y^{v_i}}$ and $\overline{z^u}$. Thus, $\overline{z^u}=x_{i+1}^{v_{i+1}}$. If $w_{i+1}$ is a zero string, then $e(w_{i+1})=x_{i+1}$ and hence, $w_{i+1}=\varepsilon_{x_{i+1}}$. So $w_{i+1}$ satisfies property (iii) at the start of the proof. Otherwise, let $\gamma$ be the last symbol of $w_{i+1}$. If $\gamma \in Q^{-1}_1$, then we necessarily have $\widehat{e}(\gamma)=\overline{z^u}=x_{i+1}^{v_{i+1}}$. So $w_{i+1}$ satisfies property (i). If $\gamma \in Q_1$, then $\widehat{e}(\gamma)=z^u=\overline{x_{i+1}^{v_{i+1}}}$ and hence, $w_{i+1}$ satisfies property (ii).
		
		\begin{figure}[b]
			\centering
			\begin{tikzpicture}[scale=1.1]
				\draw (-6.2,-0.4) node {(a) Case 1:};
				\draw (-4.7,-1) -- (-3.9,-0.4) -- (-2.9,-0.4) -- (-2.1,0.2);
				\draw (-2.1,-1) -- (-2.9,-0.4);
				\draw (-2.5,-0.3) node {$\vdots$};
				\draw (-2.9,-0.2) node {$v_i$};
				\draw (-3.9,-0.2) node {$u$};
				\draw (-2.66,-0.87) node {$x_i$};
				\draw (-3.2,-0.6) node {$y$};
				\draw (-4.32,-0.96) node {$z$};
				\draw [red] (-4.8,-0.7) node {$w_{i+1}$};
				\draw [blue] (-3.6,-1.1) node {$\beta$};
				\draw [cyan] (-2.9,0.3) node {$w'$};
				\draw [fill=black] (-2.1,0.2) ellipse (0.04 and 0.04);
				\draw [fill=black] (-2.9,-0.4) ellipse (0.04 and 0.04);
				\draw [fill=black] (-2.1,-1) ellipse (0.04 and 0.04);
				\draw [fill=black] (-3.9,-0.4) ellipse (0.04 and 0.04);
				\draw [fill=black] (-4.7,-1) ellipse (0.04 and 0.04);
				\draw [->, blue](-4.3045,-0.6939) arc (-143.9987:-57.6:0.5);
				\draw [blue](-3.6321,-0.8222) arc (-57.6035:0:0.5);
				\draw [->,cyan](-3.4,-0.4) arc (-180:-252:0.5);
				\draw [->,cyan](-3.0545,0.0755) arc (108.0001:0:0.5);
				\draw [cyan](-2.4,-0.4) arc (0:-36:0.5);
				\draw [->,red](-4.5,-0.5) -- (-4.4,-0.6);
				\draw [red](-4.4,-0.6) -- (-4.3,-0.7);
				\draw [red, dotted](-4.7,-0.3) -- (-4.5,-0.5);

				\draw (-0.6,-0.5) node {(b) Case 3:};
				\draw (0.9,-1) -- (1.7,-0.4) -- (2.7,-0.4);
				\draw (1.7,-0.2) node {$u$};
				\draw (2.7,-0.2) node {$v_i$};
				\draw (2.2,-0.2) node {$x_i$};
				\draw (1.2727,-0.9727) node {$y$};
				\draw [red] (0.8,-0.7) node {$w_{i+1}$};
				\draw [blue] (2,-1.1) node {$\alpha$};
				\draw [fill=black] (2.7,-0.4) ellipse (0.04 and 0.04);
				\draw [fill=black] (1.7,-0.4) ellipse (0.04 and 0.04);
				\draw [fill=black] (0.9,-1) ellipse (0.04 and 0.04);
				\draw [->, blue](1.2955,-0.6939) arc (-143.9987:-57.6:0.5);
				\draw [blue](1.9679,-0.8222) arc (-57.6035:0:0.5);
				\draw [->,red](1.1,-0.5) -- (1.2,-0.6);
				\draw [red](1.2,-0.6) -- (1.3,-0.7);
				\draw [red, dotted](0.9,-0.3) -- (1.1,-0.5);
			\end{tikzpicture}
			\caption{Examples of Cases 1 and 3 in the proof of Proposition~\ref{MorphismGreenWalk}(a).} \label{MorphWalk13}
		\end{figure}
		
		Case 2: $\alpha \in Q^{-1}_1$ and $w_i$ does not end on a peak. In this case, $w_{i+1}$ is given by adding a hook to the end of $w_i$. We first add a formal inverse $\beta^{-1}$ to the end of $w_i$. It follows that the arrow $\beta$ is lies in the cycle $\mathfrak{C}_{v_i}$ and therefore $\widehat{e}(\beta^{-1})=y^{v_i}$, where $y^{v_i}$ is the predecessor to $x_i^{v_i}$. If the edge $y$ associated to $y^{v_i}$ is truncated then $w_{i+1}=w_i \beta^{-1}$, as illustrated in Figure~\ref{MorphWalk24}(a)(i). In this subcase, the first two steps along a clockwise Green walk from $x_i^{v_i}$ are $\overline{y^{v_i}}$ and $y^{v_i}$ respectively. Thus $x_{i+1}^{v_{i+1}} = y^{v_i}$ and hence, $\widehat{e}(\beta^{-1})=x_{i+1}^{v_{i+1}}$. So $w_{i+1}$ satisfies property (i) at the start of the proof.
		
		On the other hand, if $y$ is not truncated then $w_{i+1}=w_i \beta^{-1} w'$, where $w'=\gamma_1\ldots\gamma_r$ is a maximal direct string. It follows from the maximality of $w'$ that $\widehat{e}(\gamma_r)=z^u$, where $z^u$ is the predecessor to $\overline{y^{v_i}}$, as illustrated in Figure~\ref{MorphWalk24}(a)(ii). The first two steps along a clockwise Green walk from $x_i^{v_i}$ are $\overline{y^{v_i}}$ and $\overline{z^u}$. Thus $x_{i+1}^{v_{i+1}} = \overline{z^u}$ and hence, $\widehat{e}(\gamma_r)=\overline{x_{i+1}^{v_{i+1}}}$. So $w_{i+1}$ satisfies property (ii).
		
		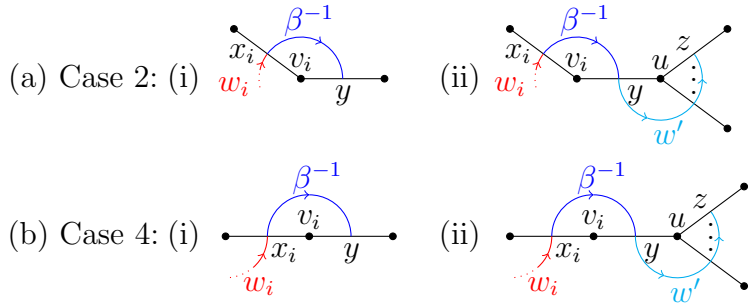
\begin{figure}[b]
			\centering
			\begin{tikzpicture}[scale=1.1]
				\draw (-3.6,1.5) node {(a) Case 2:};
				\draw (-2.4,1.5) node {(i)};
				\draw (0.9,1.5) node {(ii)};
				\draw (-1.8,2.1) -- (-1,1.5) -- (0,1.5);
				\draw (-1,1.7) node {$v_i$};
				\draw (-1.7,1.8) node {$x_i$};
				\draw (-0.491,1.3) node {$y$};
				\draw [red] (-1.8,1.4) node {$w_i$};
				\draw [blue] (-0.9,2.2) node {$\beta^{-1}$};
				\draw [fill=black] (0,1.5) ellipse (0.04 and 0.04);
				\draw [fill=black] (-1,1.5) ellipse (0.04 and 0.04);
				\draw [fill=black] (-1.8,2.1) ellipse (0.04 and 0.04);
				\draw [->,blue](-1.4,1.8) arc (143.1301:60:0.5);
				\draw [blue](-0.75,1.933) arc (59.9993:0:0.5);
				\draw [red](-1.4045,1.7939) arc (143.9987:156:0.5);
				\draw [->,red](-1.4891,1.604) arc (167.9957:156:0.5);
				\draw [red, dotted](-1.4891,1.396) arc (-167.9957:-192:0.5);

				\draw (1.5,2.1) -- (2.3,1.5) -- (3.3,1.5) -- (4.1,2.1);
				\draw (4.1,0.9) -- (3.3,1.5);
				\draw (3.7,1.6) node {$\vdots$};
				\draw (2.3,1.7) node {$v_i$};
				\draw (3.3,1.7) node {$u$};
				\draw (1.6,1.8) node {$x_i$};
				\draw (3,1.3) node {$y$};
				\draw (3.5727,1.9273) node {$z$};
				\draw [red] (1.5,1.4) node {$w_i$};
				\draw [blue] (2.4,2.2) node {$\beta^{-1}$};
				\draw [cyan] (3.4,0.8546) node {$w'$};
				\draw [fill=black] (4.1,2.1) ellipse (0.04 and 0.04);
				\draw [fill=black] (3.3,1.5) ellipse (0.04 and 0.04);
				\draw [fill=black] (4.1,0.9) ellipse (0.04 and 0.04);
				\draw [fill=black] (2.3,1.5) ellipse (0.04 and 0.04);
				\draw [fill=black] (1.5,2.1) ellipse (0.04 and 0.04);
				\draw [->,blue](1.9,1.8) arc (143.1301:60:0.5);
				\draw [blue](2.55,1.933) arc (59.9993:0:0.5);
				\draw [->,cyan](2.8,1.5) arc (180:252:0.5);
				\draw [->,cyan](3.1455,1.0245) arc (-108.0001:0:0.5);
				\draw [cyan](3.8,1.5) arc (0:36:0.5);
				\draw [red, dotted](1.8109,1.396) arc (-167.9957:-192:0.5);
				\draw [->, red](1.8109,1.604) arc (167.9957:156:0.5);
				\draw [red](1.8432,1.7034) arc (155.998:144:0.5);

				\draw (-3.6,-0.4) node {(b) Case 4:};
				\draw (-2.4,-0.4) node {(i)};
				\draw (0.9,-0.4) node {(ii)};
				\draw (-1.9,-0.4) -- (-0.9,-0.4) -- (0.1,-0.4);
				\draw (-0.9,-0.2) node {$v_i$};
				\draw (-1.2,-0.6) node {$x_i$};
				\draw (-0.391,-0.6) node {$y$};
				\draw [red] (-1.5,-1) node {$w_i$};
				\draw [blue] (-0.8,0.3) node {$\beta^{-1}$};
				\draw [fill=black] (0.1,-0.4) ellipse (0.04 and 0.04);
				\draw [fill=black] (-0.9,-0.4) ellipse (0.04 and 0.04);
				\draw [fill=black] (-1.9,-0.4) ellipse (0.04 and 0.04);
				\draw [->,blue](-1.4,-0.4) arc (180:90:0.5);
				\draw [blue](-0.9,0.1) arc (90.0002:0:0.5);
				\draw [red, ->](-1.5654,-0.7716) arc (-47.9992:-24:0.5);
				\draw [red](-1.4432,-0.6034) arc (-24.002:0:0.5);
				\draw [red,dotted](-1.8477,-0.8973) arc (-83.9964:-48:0.5);

				\draw (1.5,-0.4) -- (2.5,-0.4) -- (3.5,-0.4) -- (4.3,0.2);
				\draw (4.3,-1) -- (3.5,-0.4);
				\draw (3.9,-0.3) node {$\vdots$};
				\draw (2.5,-0.2) node {$v_i$};
				\draw (3.5,-0.2) node {$u$};
				\draw (2.2,-0.6) node {$x_i$};
				\draw (3.2,-0.6) node {$y$};
				\draw (3.7727,0.0273) node {$z$};
				\draw [red] (1.9,-1) node {$w_i$};
				\draw [blue] (2.6,0.3) node {$\beta^{-1}$};
				\draw [cyan] (3.6,-1.0454) node {$w'$};
				\draw [fill=black] (4.3,0.2) ellipse (0.04 and 0.04);
				\draw [fill=black] (3.5,-0.4) ellipse (0.04 and 0.04);
				\draw [fill=black] (4.3,-1) ellipse (0.04 and 0.04);
				\draw [fill=black] (2.5,-0.4) ellipse (0.04 and 0.04);
				\draw [fill=black] (1.5,-0.4) ellipse (0.04 and 0.04);
				\draw [->,blue](2,-0.4) arc (180:90:0.5);
				\draw [blue](2.5,0.1) arc (90.0002:0:0.5);
				\draw [->,cyan](3,-0.4) arc (-180:-108:0.5);
				\draw [->,cyan](3.3455,-0.8755) arc (-108.0001:0:0.5);
				\draw [cyan](4,-0.4) arc (0:36:0.5);
				\draw [red,dotted](1.5523,-0.8973) arc (-83.9964:-48:0.5);
				\draw [red,->](1.8346,-0.7716) arc (-47.9992:-24:0.5);
				\draw [red](1.9568,-0.6034) arc (-24.002:0:0.5);
			\end{tikzpicture}
			\caption{Examples of Cases 2 and 4 in the proof of Proposition~\ref{MorphismGreenWalk}(a).} \label{MorphWalk24}
		\end{figure}
		
		Case 3: $\alpha \in Q_1$ and $w_i$ ends on a peak. By Lemma~\ref{PeakDeep}(b)(i), $e(w)$ is truncated. Since $w_{i+1}=(w_i)_{-c}$ and there are no formal inverses at the end of $w_i$, it follows that $w_i= w_{i+1} \alpha$. In this case, $\widehat{s}(\alpha)=y^u$, where $y^u$ is the predecessor to $\overline{x_i^{v_i}}$, as illustrated in Figure~\ref{MorphWalk13}(b). The first two steps along a clockwise Green walk from $x_i^{v_i}$ are $\overline{x_i^{v_i}}$ and $\overline{y^u}$ respectively. Thus $x_{i+1}^{v_{i+1}} = \overline{y^u}$. If $w_{i+1}$ is a zero string, then $e(w_{i+1})=x_{i+1}$ and hence, $w_{i+1}=\varepsilon_{x_{i+1}}$. So $w_{i+1}$ satisfies property (iii) at the start of the proof. Otherwise, let $\gamma$ be the last symbol of $w_{i+1}$. If $\gamma \in Q^{-1}_1$, then we necessarily have $\widehat{e}(\gamma)=\overline{y^u}=x_{i+1}^{v_{i+1}}$. So $w_{i+1}$ satisfies property (i). If $\gamma \in Q_1$, then $\widehat{e}(\gamma)=y^u=\overline{x_{i+1}^{v_{i+1}}}$ and hence, $w_{i+1}$ satisfies property (ii).
		
		Case 4: $\alpha \in Q_1$ and $w_i$ does not end on a peak. The string combinatorics, illustrated in Figure~\ref{MorphWalk24}(b), is similar to Case 2. The clockwise Green walk from $x_i^{v_i}$ is also identical, so the result follows for this case by similar arguments.
		
		Case 5: $w_i$ is the zero string $\varepsilon_{x_i}$. This is only possible if either $i=0$ or $w_i = (w_{i-1})_{-c}$. Assume $0<i<k$. So $w_{i-1}=(w_i)_c= \gamma w'$ and $w_{i+1}=(w_i)_h=\beta^{-1}w''$ for some maximal inverse string $w'$ and some maximal direct string $w''$. It follows from the string combinatorics detailed in Cases 1 and 3, that $\widehat{s}(\gamma)=\overline{x_i^{v_i}}$. It then follows from Lemma~\ref{SimpleRay} that $\widehat{s}(\beta^{-1})=x_i^{v_i}$. The string combinatorics of adding a hook starting with $\beta^{-1}$ and the clockwise Green walk from $x_i^{v_i}$ is investigated in Case 2. Thus, the result for Case 5 follows by similar arguments to those used in Case 2. If $i=0$, then $w_1$ is defined in the proposition statement. The result then follows from setting $x_0^{v_0}=x^v$ and again using similar arguments to those in Case 2.
	
		The Proposition result follows from the inductive step outlined above, since setting $x_0^{v_0}=x^v$ in the cases where $w_0 = \varepsilon_x$ or $w_0$ ends with $\alpha_n \in Q^{-1}_1$, or setting $x_0^{v_0}=\overline{x^v}$ in the case where $w_0$ ends with $\alpha_n \in Q_1$ satisfies properties (i)-(iii) at the start of the proof.
		
		(b) The proof is similar to (a).
	\end{proof}
	
	Recall that the stable Auslander-Reiten quiver of a Brauer tree algebra is a finite tube (see for example \cite{GroupsWOGroups}). We will first distinguish between certain edges in the Brauer graph by introducing the notion of \emph{exceptional subtrees} of a graph. The motivation behind this definition lies in the fact that these particular subtrees of the Brauer graph have the same local structure of some Brauer tree, and hence, the string combinatorics along these subtrees behave in a similar manner to a Brauer tree algebra with exceptional multiplicity. Since the sequence (\ref{StrSeq}) defined at the start of Subsection~\ref{LocSimpleProj} terminates (in both directions) in a Brauer tree algebra, one might expect the sequence (\ref{StrSeq}) to terminate (in one direction) for certain modules related to the edges of the exceptional subtrees. This is indeed the case, and we will later show that the simple modules and the radicals of the indecomposable projectives associated to the edges of these subtrees belong to a tube.
	
	\begin{defn} \label{TreeDef}
		Let $G$ be a Brauer graph that is not a Brauer tree. Consider a subgraph $T$ of $G$ satisfying the following properties:
		\begin{enumerate}[label=(\roman*)]
			\item $T$ is a tree,
			\item $T$ has a unique vertex $v$ such that the graph $(G \setminus T) \cup \{v\}$ is connected,
			\item $T$ shares no vertex with any simple cycle of $G$, except at perhaps $v$,
			\item every vertex of $T$ has multiplicity 1, except for perhaps $v$.
		\end{enumerate}
		We will call such a subgraph an \emph{exceptional subtree} of $G$ and the vertex $v$ the \emph{connecting vertex} of $T$.
	\end{defn}
	
	Given a graph with exceptional subtrees, we can partition the edges of the graph into two distinct classes.
	
	\begin{defn}
		An edge of a Brauer graph $G$ is called an \emph{exceptional edge} if it belongs to some exceptional subtree of $G$. An edge of $G$ is otherwise called a \emph{non-exceptional edge}.
	\end{defn}
	
	Examples of exceptional subtrees are given in Figure~\ref{ExceptionalTrees}. The coloured edges of Figure~\ref{ExceptionalTrees} are the exceptional edges of $G$. All others are non-exceptional.	
	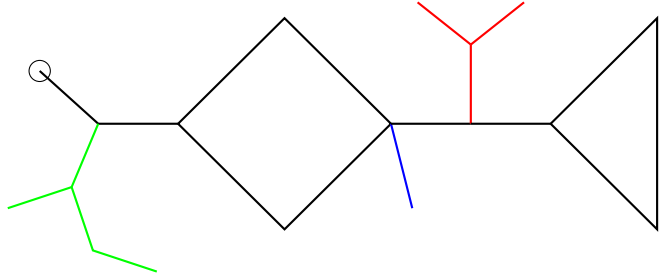
\begin{figure}[h]
		\centering
		\begin{tikzpicture}[scale=0.7]
			\draw [thick](0,0) -- (2,2) -- (4,0) -- (2,-2) -- cycle;
			\draw [thick,blue](4,0) -- (4.4,-1.6);
			\draw [thick](4,0) -- (5.5,0);
			\draw [thick][red](5.5,0) -- (5.5,1.5);
			\draw [thick][red](5.5,1.5) -- (4.5,2.3);
			\draw [thick][red](5.5,1.5) -- (6.5,2.3);
			\draw [thick](5.5,0) -- (7,0);
			\draw [thick](7,0) -- (9,2) -- (9,0) -- (9,-2) -- cycle;
			\draw [thick](0,0) -- (-1.5,0)  -- (-2.6,1);
			\draw (-2.6,1) ellipse (0.2 and 0.2);
			\draw [thick, green](-1.5,0) -- (-2,-1.2) -- (-3.2,-1.6);
			\draw [thick, green](-2,-1.2) -- (-1.6,-2.4) -- (-0.4,-2.8);
		\end{tikzpicture}
		\caption{Three distinct exceptional subtrees of a Brauer graph, coloured red, green and blue respectively. A vertex represented by a circle has multiplicity $\mathfrak{e}>1$.} \label{ExceptionalTrees}
	\end{figure}
	
	\begin{rem} \label{SubSubtree}
		An exceptional subtree of a Brauer graph can also contain as subgraphs further exceptional subtrees in the following sense. Suppose $x$ is an edge that belongs to an exceptional subtree $T$ of $G$. Consider the maximal subtree $T'$ of $T$ connected to (and possibly containing) $x$ via a vertex $v$ such that $T'$ shares no vertex with any non-exceptional edge. Then $T'$ satisfies properties (i)-(iv) of Definition~\ref{TreeDef} and is hence exceptional with connecting vertex $v$.
	\end{rem}
	
	There is a simple characterisation of the non-exceptional edges of a Brauer graph, as shown by the following.
	
	\begin{lem} \label{NonExcepClass}
		Let $G$ be a Brauer graph and $x$ be an edge in $G$. Then $x$ is non-exceptional if and only if it belongs to:
		\begin{enumerate}[label=(\roman*)]
			\item a simple cycle, or
			\item a simple path between two vertices $u,v$ belonging to simple cycles of $G$, or
			\item a simple path between a vertex $u$ belonging to a simple cycle and a vertex $v$ with $\mathfrak{e}_v>1$, or
			\item a simple path between vertices $u,v$ with $\mathfrak{e}_u,\mathfrak{e}_v>1$.
		\end{enumerate}
	\end{lem}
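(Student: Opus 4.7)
The plan is to prove the two implications separately, using one key consequence of the exceptional subtree definition throughout: by conditions (iii) and (iv) of Definition~\ref{TreeDef}, any vertex of an exceptional subtree $T$ that lies on a cycle of $G$ or has multiplicity greater than $1$ must coincide with the connecting vertex $v_0$ of $T$. Moreover, by condition (ii), the only way for a walk in $G$ to pass between $V(T) \setminus \{v_0\}$ and $V(G) \setminus V(T)$ is through $v_0$, which will be the pivotal topological fact in the forward direction.

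For the $(\Leftarrow)$ direction I assume $x$ satisfies one of (i)--(iv) and suppose for contradiction that $x$ lies in an exceptional subtree $T$ with connecting vertex $v_0$. If (i) holds, both endpoints of $x$ lie on a cycle and in $T$, so each must equal $v_0$; this forces $x$ to be a loop at $v_0$, contradicting the fact that $T$ is a tree. If (ii), (iii), or (iv) holds, fix a simple path $P$ in $G$ through $x$ from an anchor $u'$ to an anchor $v'$, where ``anchor'' means a vertex on a cycle of $G$ or with multiplicity greater than $1$. Split by the location of the endpoints: if both $u', v' \notin V(T)$, then $P$ both enters and leaves $T$, and each crossing must happen at $v_0$, so $v_0$ is visited at least twice; if exactly one of $u', v'$ lies in $V(T)$, that endpoint must equal $v_0$ by the key observation, but the path also has to leave $T$ through $v_0$, again visiting $v_0$ twice; if both $u', v' \in V(T)$, then both equal $v_0$ and $P$ would be a closed path of positive length in the tree $T$. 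Each case contradicts the simplicity of $P$, so $x$ is non-exceptional.

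For the $(\Rightarrow)$ direction I argue by contrapositive: suppose $x$ fails all of (i)--(iv). Since (i) fails, $x$ lies on no cycle and in particular is not a loop, so $G - x$ splits into two connected components $G_u \ni u$ and $G_v \ni v$, with every cycle of $G$ lying entirely in one component. If neither component contains an anchor, I may pick one, say $G_v$, and form the subgraph $T$ with vertex set $V(G_v) \cup \{u\}$ and edge set $E(G_v) \cup \{x\}$. Then $T$ is a tree (it is $G_v$ with a pendant edge $x$ to $u$), every vertex of $T$ other than $u$ has multiplicity $1$ and lies on no cycle of $G$, and $(G \setminus T) \cup \{u\} = G_u$ is connected. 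Thus $T$ is an exceptional subtree containing $x$, so $x$ is exceptional. Hence both components contain an anchor, and picking one on each side yields a simple path through $x$ realising one of (ii), (iii), (iv).

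The routine part is the construction of $T$ in the contrapositive direction; the main obstacle is the forward direction's case analysis of simple paths through $T$. Specifically, one must verify carefully that in every combination of endpoint locations of $P$ relative to $V(T)$, the anchor restriction (only $v_0$ in $V(T)$ can be an anchor) together with the gateway property of $v_0$ forces non-simplicity, and that loop and truncated-edge corner cases (where $u = v$ or $G_v$ is a single truncated vertex) are handled consistently with the definitions of ``cycle'' and ``simple path'' established in Section~\ref{Prelim}.
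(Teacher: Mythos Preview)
Your argument is correct in substance and follows the same strategy as the paper: for $(\Leftarrow)$ you exploit the gateway property of the connecting vertex $v_0$ to force a repeated vertex on the simple path (the paper argues this more tersely by observing that $(G\setminus T)\cup\{v_0\}$ would be disconnected), and for $(\Rightarrow)$ you delete $x$, take the component with no anchors, and attach $x$ to form the exceptional subtree, exactly as the paper does.

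There is one logical slip in your write-up of the contrapositive. You write ``if \emph{neither} component contains an anchor'' and then conclude ``hence \emph{both} components contain an anchor'', but these are not complementary cases: the situation where exactly one of $G_u$, $G_v$ contains an anchor falls through the cracks. What you need (and what your construction already handles verbatim) is ``if \emph{at least one} component, say $G_v$, contains no anchor''; the complementary case ``both components contain an anchor'' then yields the desired contradiction with (ii)--(iv). With that quantifier fixed, the proof is complete and matches the paper's.
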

	\begin{proof}
		If $x$ belongs to a simple cycle, then either $x$ is a loop or both its vertices belong to a simple cycle, and hence, $x$ cannot be exceptional. Suppose $x$ instead belongs to simple path 
		\begin{equation*}
			p:\xymatrix@1{u \ar@{-}[r] &\cdots \ar@{-}[r]^x & \cdots \ar@{-}[r] & v}
		\end{equation*}	
		in $G$, where $u$ and $v$ satisfy either of (ii)-(iv). Suppose for a contradiction that there exists an exceptional subtree $T$ of $G$ containing $x$. Then $T$ has at most one vertex $v'$ that belongs to a simple cycle or has multiplicity $\mathfrak{e}_{v'}>1$. Thus, $T$ cannot contain both the vertices $u$ and $v$ of $p$. Consider the subgraph $G'=(G \setminus T) \cup \{v'\}$. It follows that $G'$ is not connected, since $T$ would otherwise contain multiple vertices that belong to a simple cycle of $G$, which would result from a simple path which avoids the edge $x$ from $p$. So no such tree $T$ exists and $x$ is non-exceptional.
		
		For the converse argument, we first assume $G$ is not a Brauer tree, and hence, that there exist edges in $G$ satisfying (i)-(iv). Suppose $x$ is an edge of $G$ that does not belong to any of (i)-(iv). Then at least one vertex connected to $x$ does not belong to a simple cycle and does not have multiplicity $\mathfrak{e}>1$. If both vertices satisfy this condition, we note that since $G$ is connected and is not a Brauer tree, there must exist a simple path
		\begin{equation*}
			q: \xymatrix@1{u \ar@{-}[r] & \cdots \ar@{-}[r] & u'},
		\end{equation*}
		where $u$ is a vertex connected to $x$ and $u'$ is a vertex that belongs to a simple cycle of $G$ or is such that $\mathfrak{e}_{u'}>1$. Let $v$ be the other vertex connected to $x$. Since $x$ does not satisfy (ii)-(iv), every path of source $v$ in $G$ not containing $x$ has no vertex belonging to a simple cycle and has no vertex of multiplicity $\mathfrak{e}>1$. Since $G$ is finite, the subgraph generated by all such paths of source $v$ is a tree $T$, which is an exceptional subtree of $G$. It follows that the subgraph $T'=T \cup \{x\}\cup\{u\}$ of $G$ is also an exceptional subtree of $G$ and $u$ is the connecting vertex of $T'$. Hence, $x$ is exceptional.
	\end{proof}
	
	\begin{rem}\label{NonConnect}
		It follows from the above that non-exceptional edges in a graph are all connected to each other. Specifically, if $G$ contains at least two non-exceptional edges and $x$ is a non-exceptional edge of $G$, then there exists a non-exceptional edge $y$ connected to $x$ via a common vertex. It also follows from the above that a non-exceptional edge is never truncated.
	\end{rem}

	We will now need to introduce some technical lemmata, which will be used extensively in the proofs of the main theorems. The first lemma, given below, is used primarily to construct direct or inverse strings through exceptional edges.

	\begin{lem} \label{MaxExceptional}
		Let $A=KQ/I$ be a representation-infinite Brauer graph algebra associated to a Brauer graph $G$. Let $v$ be a vertex in $G$ and $\alpha_1\ldots \alpha_m\beta_1\ldots\beta_n$ be the cycle $\mathfrak{C}_{v,\alpha_1}$ in $Q$, where $s(\alpha_1)=x$ and $s(\beta_1)=y$ for some edges $x$ and $y$ in $G$ such that $x \neq y$.
			
		\begin{enumerate}[label=(\alph*)]
			\item Suppose that $s(\beta_1), \ldots, s(\beta_n)$ each belong to an exceptional subtree of $G$ with connecting vertex $v$ and let $w_0$ be a string such that $e(w_0)=x$ and $w_0\beta^{-1}_n\ldots\beta^{-1}_1$ is a string. Then there exists a ray
			\begin{equation*}
				M(w_0) \rightarrow M(w_1) \rightarrow \cdots \rightarrow M(w_k) \rightarrow \cdots
			\end{equation*}
			in $_s\Gamma_A$ such that $w_k = w_0\beta^{-1}_n\ldots\beta^{-1}_1$ and $|w_i| > |w_0|$ for all $0<i \leq k$.
			
			\item Suppose that $e(\alpha_1), \ldots, e(\alpha_n)$ each belong to an exceptional subtree of $G$ with connecting vertex $v$ and let $w_0$ be a string such that $e(w_0)=x$ and $w_0\alpha_1\ldots\alpha_n$ is a string. Then there exists a ray
			\begin{equation*}
				\cdots \rightarrow M(w_{-k}) \rightarrow \cdots \rightarrow M(w_{-1}) \rightarrow M(w_0)
			\end{equation*}
			in $_s\Gamma_A$ such that $w_{-k} = w_0\alpha_1\ldots\alpha_n$ and $|w_{-i}| > |w_0|$ for all $0<i \leq k$.
		\end{enumerate}
	\end{lem}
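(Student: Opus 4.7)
My plan is to prove (a) by induction on $n$, with the trivial base case $n=0$ giving $k=0$. For the inductive step I reduce to a single intermediate claim: there exists some $j\geq 1$ with $w_j=w_0\beta_n^{-1}$ and $|w_i|>|w_0|$ for all $0<i\leq j$. Granting this, applying the inductive hypothesis to the shorter cycle $\alpha_1\ldots\alpha_m\beta_1\ldots\beta_{n-1}$ with $w_j$ as the new starting string extends the ray to $w_k=w_j\beta_{n-1}^{-1}\ldots\beta_1^{-1}=w_0\beta_n^{-1}\ldots\beta_1^{-1}$, and concatenating the length inequalities yields $|w_i|>|w_0|$ throughout.

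For the intermediate claim, note that since $w_0\beta_n^{-1}$ is a string, $w_0$ does not end on a peak, so $w_1=(w_0)_h=w_0\beta_n^{-1}u_{\beta_n}^{-1}$, and the appended direct part $u_{\beta_n}^{-1}$ enters the exceptional subtree $T_n$ containing $s(\beta_n)$. Proposition~\ref{MorphismGreenWalk}(a) then identifies the endpoints $e(w_i)$ with successive terms of the clockwise double-stepped Green walk from $x^v$ (or from $\overline{x^v}$, depending on whether the last symbol of $w_0$ lies in $Q_1^{-1}$ or $Q_1$; the case $w_0=\varepsilon_x$ is handled by combining Lemma~\ref{SimpleRay} with part (a)(ii) of the proposition). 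Because $T_n$ is a tree meeting the remainder of $G$ only at $v$ and all interior vertices of $T_n$ have multiplicity $1$ and lie on no cycle of $G$, this Green walk enters $T_n$ through $s(\beta_n)$, sweeps through every half-edge of $T_n$, and returns to $v$ through $s(\beta_n)$ after a finite number of double-steps; let $j$ denote this return index, so $e(w_j)=s(\beta_n)$.

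The main technical obstacle is to identify the returned string as $w_j=w_0\beta_n^{-1}$ rather than some longer extension. This reflects the fact that the local string combinatorics inside an exceptional subtree mirror those of a Brauer tree algebra, whose stable Auslander-Reiten quiver is a single finite tube: during a complete subtree traversal, each hook introduced by an add-hook step is eventually undone by a corresponding delete-cohook step, so the net effect of the traversal is to append only the initial $\beta_n^{-1}$ to $w_0$. I would formalise this by a sub-induction on the number of edges of $T_n$: the base case is when $T_n=\{s(\beta_n)\}$ with truncated other vertex, in which case $u_{\beta_n}^{-1}$ is empty and $w_1=w_0\beta_n^{-1}$ directly; the inductive step peels off a leaf of $T_n$ via a hook/cohook pair and reduces to a smaller subtree. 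Length positivity $|w_i|>|w_0|$ follows because every intermediate $w_i$ properly extends $w_0$ (each hook we add sits on top of $w_0\beta_n^{-1}$ or longer, and the cohook deletions that return us towards $v$ never cut back into $w_0$ itself). Part~(b) is entirely analogous, obtained by dualising throughout: one invokes Proposition~\ref{MorphismGreenWalk}(b), works with predecessor sequences and the anticlockwise Green walk, and replaces hook additions by cohook additions on the opposite end of the string.
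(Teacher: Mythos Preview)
Your strategy is sound and, like the paper, leans on Proposition~\ref{MorphismGreenWalk} to track $e(w_i)$ via the clockwise double-stepped Green walk. Two points need tightening. First, the ``shorter cycle $\alpha_1\ldots\alpha_m\beta_1\ldots\beta_{n-1}$'' is not a cycle in $Q$; what you really want is to rotate to $\mathfrak{C}_{v,\beta_n}=\beta_n\alpha_1\ldots\alpha_m\beta_1\ldots\beta_{n-1}$, take $s(\beta_n)$ as the new $x$ and $w_j=w_0\beta_n^{-1}$ as the new starting string, so that the new $\beta$-part has length $n-1$ and the induction goes through. Second, ``peels off a leaf via a hook/cohook pair'' undersells the sub-induction: after $w_1=w_0\beta_n^{-1}\delta_1\ldots\delta_{s-1}$ (with $\mathfrak{C}_{u,\delta_1}=\delta_1\ldots\delta_s$ at the non-$v$ vertex $u$ of $s(\beta_n)$), you must strip the $\delta_i$ from the right one at a time, recursing into the sub-subtree at each $e(\delta_i)$ whose far vertex is non-truncated before the corresponding cohook deletion can occur. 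This is a nested recursion, not a single leaf removal.

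The paper takes a different, non-inductive route. It works with a single exceptional tree $T$ containing all of $s(\beta_1),\ldots,s(\beta_n)$, observes that the double-stepped clockwise Green walk from $x^v$ stays inside $T$ for exactly the first $k$ steps and hits the half-edges $s(\beta_n)^v,\ldots,s(\beta_1)^v$ in that order, so $e(w_k)=y$. It then argues structurally that every $w_i$ with $0<i\leq k$ has the form $w_0\beta_n^{-1}w'_i$ for some string $w'_i$ lying entirely in $T$ (giving $|w_i|>|w_0|$ immediately), and that because $T$ is a tree with all non-connecting vertices of multiplicity $1$, no string of this form ending at $y$ can use arrows around any vertex of $T$ other than $v$; hence $w_k=w_0\beta_n^{-1}\ldots\beta_1^{-1}$. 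Your inductive decomposition would also succeed once the two slips above are corrected, but the paper's one-shot structural argument is shorter and avoids the bookkeeping of two interleaved inductions.
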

	\begin{proof}
		(a) Let $(w_j)$ be the sequence in (\ref{StrSeq}). If $w_0$ is a zero string, then let $w_1=\gamma^{-1}\delta_1 \ldots \delta_r$ such that $\widehat{s}(\gamma^{-1})=x^{v}$. Otherwise, let $\alpha$ be the last symbol of $w_0$. Since $w_0\beta^{-1}_n$ is a string, we may conclude that if $\alpha \in Q^{-1}_1$ then $\widehat{e}(\alpha)=x^v$ and if $\alpha \in Q_1$ then $\widehat{e}(\alpha)=\overline{x^v}$. Thus by Proposition~\ref{MorphismGreenWalk}, $e(w_i)$ is determined by the $i$-th step along a double-stepped clockwise Green walk from $x^v$.
		
		Let $T$ be the exceptional subtree of $G$ with connecting vertex $v$ such that $T$ contains the edges $y_1=s(\beta_1), \ldots, y_n=s(\beta_n)$. The first $k$ steps along a clockwise double-stepped Green walk from $x^v$ step along the half-edges of $T$ until we reach the $(k+1)$-th step where we then reach a half-edge not in $T$. So by Proposition~\ref{MorphismGreenWalk}, $e(w_i)$ belongs to $T$ for all $i\leq k$. At each step, $w_i$ is of the form $w_0 \beta^{-1}_n w'_i$, where $w'_i$ is a string such that each symbol starts and ends at an edge in $T$. It therefore follows that $|w_i|>|w_0|$ for all $0<i\leq k$.
		
		We further note that a clockwise Green walk along a tree steps on both half-edges associated to each edge in the tree, so a clockwise double-stepped Green walk steps on precisely one half-edge for each edge in the tree (until we step on a half-edge not in $T$). One can show that the clockwise double-stepped Green walk from $x^v$ steps along the half-edges $y_n^{v}, \ldots, y_1^{v}$. In particular, one can show that $y_1^{v}$ is the $k$-th step along such a walk. Thus, $e(w_k)=y_1=y$.
		
		Since $T$ is a tree and all vertices of $T$ (except perhaps $v$) are of multiplicity 1, it is impossible to construct a string ending at $y_1$ that contains arrows or formal inverses around any vertex of $T$ other than $v$. Moreover, since each $w_i$ is determined by adding hooks or deleting cohooks from the end of the string and $e(w_i)$ belongs to $T$ for all $i \leq k$, we conclude that $w_k = w_0\beta^{-1}_n\ldots\beta^{-1}_1$.
		
		(b) The proof is similar to (a).
	\end{proof}
	
	In what follows, it is helpful to define a new form of Green walk, called a \emph{non-exceptional Green walk}. This is a Green walk that ignores exceptional edges.
	
	\begin{defn}
		By a \emph{non-exceptional Green walk} from a non-exceptional edge $x_0$ via a vertex $v_0$, we mean a sequence of half-edges $(x_j^{v_j})_{j \in \mathbb{Z}_{\geq 0}}$, where $x_{i+1}$ is connected to $x_i$ via the vertex $v_i$ and $\overline{x_{i+1}^{v_{i+1}}}$ is the first half-edge in the successor sequence of $x_i^{v_i}$ such that $x_{i+1}$ is non-exceptional. By a \emph{non-exceptional clockwise Green walk} from a non-exceptional edge $x_0$ via $v_0$, we mean a similar sequence $(x_j^{v_j})_{j\in\mathbb{Z}_{\geq 0}}$ of half-edges where each $\overline{x_{i+1}^{v_{i+1}}}$ is the first half-edge in the predecessor sequence of $x_i^{v_i}$ such that $x_{i+1}$ is non-exceptional.
	\end{defn}
	
	The next lemma shows that by skipping certain modules along a ray of source or target a string module $M(w_0)$, one can ignore the effect of exceptional edges when adding hooks or cohooks.
	
	\begin{lem} \label{PathWalk}
		Let $A=KQ/I$ be a representation-infinite Brauer graph algebra constructed from a Brauer graph $G$. Suppose $x_1$ and $x_2$ are non-exceptional edges incident to a vertex $v_1$ and let $\mathfrak{C}_{v_1,\alpha_1}=\alpha_1\ldots\alpha_m\beta_1\ldots\beta_n$, where $\widehat{s}(\alpha_1)=x_1^{v_1}$ and $\widehat{s}(\beta_1)=x_2^{v_1}$. Suppose $w_0$ is a string such that $e(w_0)=x_1$.
		\begin{enumerate}[label=(\alph*)]
			\item Suppose $x_2^{v_2}$ and $x_3^{v_3}$ are the first and second steps along a non-exceptional clockwise Green walk from $x_1^{v_1}$ respectively and suppose $w_0 \beta^{-1}_n\ldots\beta^{-1}_1$ is a string. Then there exists a ray
			\begin{equation*}
				M(w_0) \rightarrow M(w_1) \rightarrow \cdots \rightarrow M(w_k) \rightarrow \cdots
			\end{equation*}
			in $_s\Gamma_A$ such that $w_k=w_0 \beta^{-1}_n\ldots\beta^{-1}_1 w'$, where $w'=\gamma_1\ldots\gamma_r$ is the direct string of greatest length such that $\widehat{e}(\gamma_r)=\overline{x_3^{v_3}}$. Furthermore, $e(w_i)$ is exceptional for all $0<i < k$ and $|w_i| > |w_0|$ for all $0<i \leq k$.
			
			\item Suppose $x_2^{v_2}$ and $x_3^{v_3}$ are the first and second steps along a non-exceptional Green walk from $x_1^{v_1}$ respectively and suppose $w_0 \alpha_1\ldots\alpha_m$ is a string. Then there exists a ray
			\begin{equation*}
				\cdots \rightarrow M(w_{-k}) \rightarrow \cdots \rightarrow M(w_{-1}) \rightarrow M(w_0)
			\end{equation*}
			in $_s\Gamma_A$ such that $w_{-k}=w_0 \alpha_1\ldots\alpha_m w'$, where $w'=\gamma^{-1}_1\ldots\gamma^{-1}_r$ is the inverse string of greatest length such that $\widehat{e}(\gamma^{-1}_r)=\overline{x_3^{v_3}}$. Furthermore, $e(w_{-i})$ is exceptional for all $0<i < k$ and $|w_{-i}| > |w_0|$ for all $0<i \leq k$.
		\end{enumerate}
	\end{lem}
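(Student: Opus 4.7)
The plan is to combine Proposition~\ref{MorphismGreenWalk}(a), which identifies $e(w_i)$ with the $i$-th half-edge of the clockwise double-stepped Green walk from $x_1^{v_1}$, with Lemma~\ref{MaxExceptional}(a) applied in two stages, separated by a single hook addition that bridges the non-exceptional transition at $x_2$.

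For part~(a), I would first apply Lemma~\ref{MaxExceptional}(a) after rearranging the cycle $\mathfrak{C}_{v_1,\alpha_1}$, treating $\alpha_1\ldots\alpha_m\beta_1$ as the ``$\alpha$-part'' and $\beta_2\ldots\beta_n$ as the ``$\beta$-part''. Since $x_2 = s(\beta_1)$ is the first non-exceptional edge encountered clockwise from $x_1$ at $v_1$, the sources $s(\beta_2),\ldots,s(\beta_n)$ all lie in exceptional subtrees with connecting vertex $v_1$, so the hypotheses of Lemma~\ref{MaxExceptional}(a) are satisfied. This produces a subray from $M(w_0)$ up to $M(w_{k_1})$ with $w_{k_1} = w_0\beta_n^{-1}\ldots\beta_2^{-1}$, and with $|w_i|>|w_0|$ and $e(w_i)$ exceptional for $0<i\leq k_1$. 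Because $w_{k_1}\beta_1^{-1}$ is a substring of the hypothesized string $w_0\beta_n^{-1}\ldots\beta_1^{-1}$, the string $w_{k_1}$ does not end on a peak; by~(\ref{StrSeq}) the next term is therefore $w_{k_1+1}=w_{k_1}\beta_1^{-1}u_{\beta_1}^{-1}$, where $u_{\beta_1}^{-1}$ is the maximal direct string at $v_2$ starting from the half-edge $x_2^{v_2}$. This bridges the non-exceptional transition at $x_2$ and begins traversing the exceptional subtree at $v_2$ lying clockwise between $x_2$ and $x_3$.

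To complete the ray, I would continue~(\ref{StrSeq}) through the exceptional subtree at $v_2$: at each subsequent step the string either contracts via a cohook deletion (when the current endpoint is a truncated exceptional edge, so the string ends on a peak) or further extends via a hook (when the endpoint leads into a deeper subtree branch). Throughout, Proposition~\ref{MorphismGreenWalk}(a) guarantees that $e(w_i)$ continues along the clockwise double-stepped Green walk from $x_1^{v_1}$, which visits precisely the half-edges of the exceptional subtrees at $v_1$ and $v_2$ before first landing at $\overline{x_3^{v_3}} = x_3^{v_2}$ at some step $k$. At that stage the last symbol of $w_k$ must be a direct arrow in $\mathfrak{C}_{v_2}$ terminating at $x_3^{v_2}$; the maximality built into the hook construction, together with the fact that any further direct cycling around $v_2$ would overshoot $x_3$, forces the tail of $w_k$ to equal the maximal direct string ending at $x_3^{v_2}$, giving $w_k = w_0\beta_n^{-1}\ldots\beta_1^{-1}w'$.

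The main obstacle will be the third step: controlling the string combinatorics when the exceptional subtree at $v_2$ contains non-truncated edges, since then the sequence can temporarily extend deep into a branch before retracting. A clean way to handle this is by a recursive invocation of Lemma~\ref{MaxExceptional} inside the exceptional subtree at $v_2$, using the tree structure to ensure every excursion returns, mirroring the behavior of Green walks on trees. The property $e(w_i)$ exceptional for $0<i<k$ and the inequality $|w_i|>|w_0|$ for $0<i\leq k$ then follow, respectively from the fact that the Green walk visits only exceptional half-edges strictly between $x_1^{v_1}$ and $x_3^{v_3}$, and from the observation that the extensions and retractions never touch the prefix $w_0$. Part~(b) follows by an entirely symmetric argument, interchanging the roles of direct and inverse arrows, successor and predecessor sequences, and clockwise and anticlockwise Green walks, using Lemma~\ref{MaxExceptional}(b) and Proposition~\ref{MorphismGreenWalk}(b) in place of their (a) counterparts.
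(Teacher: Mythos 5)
Your proposal matches the paper's proof closely in structure, and both the key lemmas and their roles line up. The first two steps are identical to the paper's: apply Lemma~\ref{MaxExceptional}(a) to the shifted decomposition of $\mathfrak{C}_{v_1}$ (with $\alpha_1\ldots\alpha_m\beta_1$ as the direct part and $\beta_2\ldots\beta_n$ as the inverse part, valid since $s(\beta_2),\ldots,s(\beta_n)$ are exceptional with connecting vertex $v_1$) to reach $w_l=w_0\beta_n^{-1}\ldots\beta_2^{-1}$, then observe $w_l$ does not end on a peak so that $w_{l+1}=(w_l)_h=w_l\beta_1^{-1}w''$ with $w''=\gamma_1\ldots\gamma_t$ the maximal direct string around $v_2$ with $\widehat{s}(\gamma_1)=x_2^{v_2}$. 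Where you diverge is the third step: rather than running~(\ref{StrSeq}) through the exceptional subtree at $v_2$ or invoking Lemma~\ref{MaxExceptional} ``recursively'', the paper makes a single, clean application of Lemma~\ref{MaxExceptional}(b). It notes that if $e(w_{l+1})$ is exceptional then the arrows $\gamma_{r+1},\ldots,\gamma_t$ beyond the index $r$ with $\widehat{e}(\gamma_r)=\overline{x_3^{v_3}}$ all have exceptional targets with connecting vertex $v_2$, so Lemma~\ref{MaxExceptional}(b), with $w_l\beta_1^{-1}\gamma_1\ldots\gamma_r$ as the short string and $w_{l+1}$ as the long one, directly yields the ray segment $M(w_{l+1})\to\cdots\to M(w_k)$ together with $|w_i|>|w_k|>|w_0|$; since this segment is given by modifications at the end of the string, it lies on the same line through $M(w_0)$ and $M(w_l)$. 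The recursion you anticipate for deeper branches is already internal to Lemma~\ref{MaxExceptional}'s proof, so it need not be re-run here. Also be careful with the step where you assert the last symbol of $w_k$ ``must be a direct arrow'': Proposition~\ref{MorphismGreenWalk}(a) only pins down the half-edge $e(w_i)$ corresponds to, leaving in principle the cases of an inverse ending at that half-edge, an arrow ending at its involution, or a zero string; the paper sidesteps this by deriving the precise form of $w_k$ directly from Lemma~\ref{MaxExceptional}(b) rather than from the Green-walk tracking alone.
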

	\begin{proof}
		(a) Let $(w_j)$ be the sequence in (\ref{StrSeq}) with $w_0$ as defined in the lemma. If $w_0$ is a zero string, then assume the first symbol $\alpha^{-1}$ of $w_1$ is such that $\widehat{s}(\alpha^{-1})=x_1^{v_1}$. Since $x_2$ is given by a non-exceptional clockwise Green walk from $x_1$ via $v_1$, it follows that $s(\beta_i)$ is exceptional for all $i>1$. Additionally, since $w_0 \beta^{-1}_n \ldots\beta^{-1}_{2}$ is a string, Lemma~\ref{MaxExceptional}(a) applies and there exists a ray
		\begin{equation*}
			M(w_0) \rightarrow M(w_1) \rightarrow \cdots \rightarrow M(w_{l}) \rightarrow \cdots
		\end{equation*}
		in $_s\Gamma_A$ such that $w_l = w_0\beta^{-1}_n\ldots\beta^{-1}_{2}$ and $|w_i| > |w_0|$ for all $0<i \leq l$. It follows from Proposition~\ref{MorphismGreenWalk}(a) and the proof of Lemma~\ref{MaxExceptional}(a) that $e(w_i)$ is exceptional for all $0<i \leq l$.
			
		Since $w_l \beta^{-1}_1$ is a string, $w_l$ does not end on a peak and hence, $w_{l+1}=(w_l)_h=w_l\beta^{-1}_1 w''$, where $w''=\gamma_1\ldots\gamma_t$ is a maximal direct string. So $|w_{l+1}|>|w_0|$. If $e(w_{l+1})$ is non-exceptional, then $\widehat{e}(\gamma_t)=\overline{x_3^{v_3}}$, as required. So suppose instead $e(w_{l+1})$ is exceptional. Then there exists an integer $r$ such that $\widehat{e}(\gamma_r)=\overline{x_3^{v_3}}$ and $e(\gamma_i)$ is exceptional for all $r < i \leq t$. In particular, the string $w' =\gamma_1\ldots\gamma_r$ is the direct string of greatest length such that $\widehat{e}(\gamma_r)=\overline{x_3^{v_3}}$. By Lemma~\ref{MaxExceptional}(b), there exists a ray
		\begin{equation*}
			\cdots \rightarrow M(w_{l+1}) \rightarrow \cdots \rightarrow M(w_{k-1}) \rightarrow M(w_k)
		\end{equation*}
		in $_s\Gamma_A$ such that $w_k=w_l \beta^{-1}_1 w'$ and $w_{l+1}$ is as above. Moreover, $|w_i| > |w_k|>|w_0|$ for all $l+1 \leq i < k$. This ray belongs to the same ray containing $M(w_0)$ and $M(w_l)$, as all changes are made to the end of the string. The result then follows.
		
		(b) The proof is similar to (a).
	\end{proof}
	
	\begin{exam}
		Consider the following Brauer graph $G$, where the circled vertices $u_3$ and $u_4$ have a multiplicity of two and all other vertices have multiplicity one.
		\begin{center}
			\begin{tikzpicture}[scale=0.8]
				\draw[thick] (1.5,0) -- (2.5,1) -- (2.5,2);
				\draw[thick] (3.5,1) -- (2.5,1);
				\draw[thick] (1.5,0) -- (2.5,-1);
				\draw[thick] (1.5,0) -- (0,0) -- (-1,-1);
				\draw[thick] (0,0) -- (-1,1);
				\draw[thick] (-1,1) .. controls (-0.5,1.5) and (-1.5,2.5) .. (-2,2) .. controls (-2.5,1.5) and (-1.5,0.5) .. (-1,1);
	
				\draw [fill=black] (-1,1) ellipse (0.05 and 0.05);
				\draw (-1,-1) ellipse (0.1 and 0.1);
				\draw [fill=black] (0,0) ellipse (0.05 and 0.05);
				\draw [fill=black] (2.5,-1) ellipse (0.05 and 0.05);
				\draw (1.5,0) ellipse (0.1 and 0.1);
				\draw [fill=black] (2.5,1) ellipse (0.05 and 0.05);
				\draw [fill=black] (3.5,1) ellipse (0.05 and 0.05);
				\draw [fill=black] (2.5,2) ellipse (0.05 and 0.05);
	
				\draw (-1,0.6) node {$u_1$};
				\draw (0.2,0.3) node {$u_2$};
				\draw (-1.2,-1.3) node {$u_3$};
				\draw (2,0) node {$u_4$};
				\draw (2.8,-1.3) node {$u_5$};
				\draw (2.2,1.1) node {$u_6$};
				\draw (3.8,1) node {$u_7$};
				\draw (2.5,2.3) node {$u_8$};
	
				\draw (-0.8,2) node {$y_1$};		
				\draw (-0.2,0.7) node {$y_2$};
				\draw (-0.2,-0.7) node {$y_3$};
				\draw (0.8,0.3) node {$y_4$};
				\draw (1.8,-0.7) node {$y_5$};
				\draw (2.4,0.4) node {$y_6$};
				\draw (3,0.7) node {$y_7$};
				\draw (2.9,1.6) node {$y_8$};
			\end{tikzpicture}
		\end{center}
		The exceptional edges of $G$ are $y_5$, $y_6$, $y_7$ and $y_8$. The non-exceptional edges of $G$ are $y_1$, $y_2$, $y_3$ and $y_4$.
		
		Let $\mathfrak {C}_{u_2,\beta_1}=\beta_1\beta_2\beta_3$ and $\mathfrak {C}_{u_4,\gamma_1}=\gamma_1\gamma_2\gamma_3$, where $s(\beta_1)=y_2$ and $s(\gamma_1)=y_4$. Consider the string $w_0= \beta_1^{-1}$. The first two steps along a non-exceptional clockwise Green walk from $y_2^{u_2}$ are $y_4^{u_4}$ and $y_4^{u_2}$ respectively. Since $w_0\beta^{-1}_3$ is a string, Lemma~\ref{PathWalk}(a) implies there exists a ray 
		\begin{equation*} 
			M(w_0) \rightarrow M(w_1) \rightarrow \cdots \rightarrow M(w_k) \rightarrow \cdots
		\end{equation*}
		in $_s\Gamma_A$ such that $w_k=w_0\beta^{-1}_3\gamma_1\gamma_2\gamma_3$. Moreover, $e(w_i)$ is exceptional for all $0<i < k$ and $|w_i| > |w_0|$ for all $0<i \leq k$. One can verify that $k=5$ in this example, as illustrated in Figure~\ref{PathWalkExample}.
		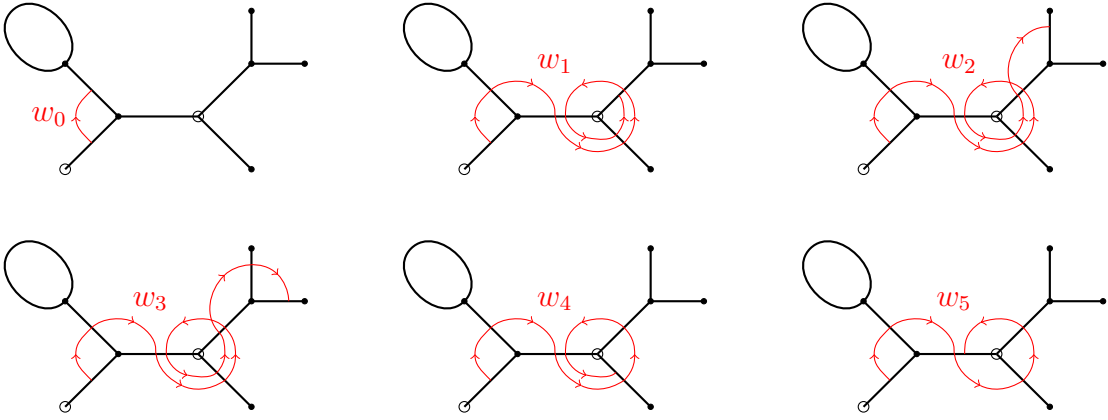
\begin{figure}[h]
			\centering
			\begin{tikzpicture}[scale=0.7]
				\draw[thick] (-6,4.5) -- (-5,5.5) -- (-5,6.5);
				\draw[thick] (-4,5.5) -- (-5,5.5);
				\draw[thick] (-6,4.5) -- (-5,3.5);
				\draw[thick] (-6,4.5) -- (-7.5,4.5) -- (-8.5,3.5);
				\draw[thick] (-7.5,4.5) -- (-8.5,5.5);
				\draw[thick] (-8.5,5.5) .. controls (-8,6) and (-9,7) .. (-9.5,6.5) .. controls (-10,6) and (-9,5) .. (-8.5,5.5);
	
				\draw [fill=black] (-8.5,5.5) ellipse (0.05 and 0.05);
				\draw (-8.5,3.5) ellipse (0.1 and 0.1);
				\draw [fill=black] (-7.5,4.5) ellipse (0.05 and 0.05);
				\draw [fill=black] (-5,3.5) ellipse (0.05 and 0.05);
				\draw (-6,4.5) ellipse (0.1 and 0.1);
				\draw [fill=black] (-5,5.5) ellipse (0.05 and 0.05);
				\draw [fill=black] (-4,5.5) ellipse (0.05 and 0.05);
				\draw [fill=black] (-5,6.5) ellipse (0.05 and 0.05);
	
				\draw [red, ->](-8,4) .. controls (-8.2,4.2) and (-8.3,4.3) .. (-8.3,4.5);
				\draw [red](-8.3,4.5) .. controls (-8.3,4.7) and (-8.2,4.8) .. (-8,5);
	
				\draw[thick] (1.5,4.5) -- (2.5,5.5) -- (2.5,6.5);
				\draw[thick] (3.5,5.5) -- (2.5,5.5);
				\draw[thick] (1.5,4.5) -- (2.5,3.5);
				\draw[thick] (1.5,4.5) -- (0,4.5) -- (-1,3.5);
				\draw[thick] (0,4.5) -- (-1,5.5);
				\draw[thick] (-1,5.5) .. controls (-0.5,6) and (-1.5,7) .. (-2,6.5) .. controls (-2.5,6) and (-1.5,5) .. (-1,5.5);
	
				\draw [fill=black] (-1,5.5) ellipse (0.05 and 0.05);
				\draw (-1,3.5) ellipse (0.1 and 0.1);
				\draw [fill=black] (0,4.5) ellipse (0.05 and 0.05);
				\draw [fill=black] (2.5,3.5) ellipse (0.05 and 0.05);
				\draw (1.5,4.5) ellipse (0.1 and 0.1);
				\draw [fill=black] (2.5,5.5) ellipse (0.05 and 0.05);
				\draw [fill=black] (3.5,5.5) ellipse (0.05 and 0.05);
				\draw [fill=black] (2.5,6.5) ellipse (0.05 and 0.05);
	
				\draw [red, ->](-0.5,4) .. controls (-0.7,4.2) and (-0.8,4.3) .. (-0.8,4.5);
				\draw [red](-0.8,4.5) .. controls (-0.8,4.7) and (-0.7,4.8) .. (-0.5,5);
				\draw [red,->](-0.5,5) .. controls (-0.3,5.2) and (0.1,5.2) .. (0.3,5.1);
				\draw [red,->](0.3,5.1) .. controls (0.5,5) and (0.7,4.8) .. (0.7,4.6) .. controls (0.7,4.2) and (1,4) .. (1.2,3.9);
				\draw [red,->](1.2,3.9) .. controls (1.4,3.8) and (1.7,3.8) .. (2,4) .. controls (2.2,4.2) and (2.2,4.4) .. (2.2,4.5);
				\draw [red,->](2.2,4.5) .. controls (2.2,4.6) and (2.2,4.8) .. (2,5) .. controls (1.8,5.2) and (1.5,5.2) .. (1.2,5.1);
				\draw [red,->](1.2,5.1) .. controls (0.9,4.9) and (0.9,4.7) .. (0.9,4.5) .. controls (0.9,4.4) and (1.05,4.15) .. (1.3,4.1);
				\draw [red,->](1.3,4.1) .. controls (1.55,4.05) and (1.75,4.05) .. (1.85,4.15) .. controls (1.95,4.25) and (2,4.4) .. (2,4.5);
				\draw [red](2,4.5) .. controls (2,4.6) and (2,4.8) .. (1.9,4.9);
	
				\draw[thick] (9,4.5) -- (10,5.5) -- (10,6.5);			
				\draw[thick] (11,5.5) -- (10,5.5);
				\draw[thick] (9,4.5) -- (10,3.5);
				\draw[thick] (9,4.5) -- (7.5,4.5) -- (6.5,3.5);
				\draw[thick] (7.5,4.5) -- (6.5,5.5);
				\draw[thick] (6.5,5.5) .. controls (7,6) and (6,7) .. (5.5,6.5) .. controls (5,6) and (6,5) .. (6.5,5.5);
	
				\draw [fill=black] (6.5,5.5) ellipse (0.05 and 0.05);
				\draw (6.5,3.5) ellipse (0.1 and 0.1);
				\draw [fill=black] (7.5,4.5) ellipse (0.05 and 0.05);
				\draw [fill=black] (10,3.5) ellipse (0.05 and 0.05);
				\draw (9,4.5) ellipse (0.1 and 0.1);
				\draw [fill=black] (10,5.5) ellipse (0.05 and 0.05);
				\draw [fill=black] (11,5.5) ellipse (0.05 and 0.05);
				\draw [fill=black] (10,6.5) ellipse (0.05 and 0.05);
	
				\draw [red, ->](7,4) .. controls (6.8,4.2) and (6.7,4.3) .. (6.7,4.5);
				\draw [red](6.7,4.5) .. controls (6.7,4.7) and (6.8,4.8) .. (7,5);
				\draw [red,->](7,5) .. controls (7.2,5.2) and (7.6,5.2) .. (7.8,5.1);
				\draw [red,->](7.8,5.1) .. controls (8,5) and (8.2,4.8) .. (8.2,4.6) .. controls (8.2,4.2) and (8.5,4) .. (8.7,3.9);
				\draw [red,->](8.7,3.9) .. controls (8.9,3.8) and (9.2,3.8) .. (9.5,4) .. controls (9.7,4.2) and (9.7,4.4) .. (9.7,4.5);
				\draw [red,->](9.7,4.5) .. controls (9.7,4.6) and (9.7,4.8) .. (9.5,5) .. controls (9.3,5.2) and (9,5.2) .. (8.7,5.1);
				\draw [red,->](8.7,5.1) .. controls (8.4,4.9) and (8.4,4.7) .. (8.4,4.5) .. controls (8.4,4.4) and (8.55,4.15) .. (8.8,4.1);
				\draw [red,->](8.8,4.1) .. controls (9.05,4.05) and (9.25,4.05) .. (9.35,4.15) .. controls (9.45,4.25) and (9.5,4.4) .. (9.5,4.5);
				\draw [red,->](9.5,4.5) .. controls (9.5,4.6) and (9.5,4.8) .. (9.4,4.9) .. controls (9.1,5.2) and (9.2,5.7) .. (9.5,6);
				\draw [red](9.5,6) .. controls (9.7,6.2) and (9.9,6.2) .. (10,6.2);
	
				\draw[thick] (-6,0) -- (-5,1) -- (-5,2);
				\draw[thick] (-4,1) -- (-5,1);
				\draw[thick] (-6,0) -- (-5,-1);
				\draw[thick] (-6,0) -- (-7.5,0) -- (-8.5,-1);
				\draw[thick] (-7.5,0) -- (-8.5,1);
				\draw[thick] (-8.5,1) .. controls (-8,1.5) and (-9,2.5) .. (-9.5,2) .. controls (-10,1.5) and (-9,0.5) .. (-8.5,1);
	
				\draw [fill=black] (-8.5,1) ellipse (0.05 and 0.05);
				\draw (-8.5,-1) ellipse (0.1 and 0.1);
				\draw [fill=black] (-7.5,0) ellipse (0.05 and 0.05);
				\draw [fill=black] (-5,-1) ellipse (0.05 and 0.05);
				\draw (-6,0) ellipse (0.1 and 0.1);
				\draw [fill=black] (-5,1) ellipse (0.05 and 0.05);
				\draw [fill=black] (-4,1) ellipse (0.05 and 0.05);
				\draw [fill=black] (-5,2) ellipse (0.05 and 0.05);
	
				\draw [red, ->](-8,-0.5) .. controls (-8.2,-0.3) and (-8.3,-0.2) .. (-8.3,0);
				\draw [red](-8.3,0) .. controls (-8.3,0.2) and (-8.2,0.3) .. (-8,0.5);
				\draw [red,->](-8,0.5) .. controls (-7.8,0.7) and (-7.4,0.7) .. (-7.2,0.6);
				\draw [red,->](-7.2,0.6) .. controls (-7,0.5) and (-6.8,0.3) .. (-6.8,0.1) .. controls (-6.8,-0.3) and (-6.5,-0.5) .. (-6.3,-0.6);
				\draw [red,->](-6.3,-0.6) .. controls (-6.1,-0.7) and (-5.8,-0.7) .. (-5.5,-0.5) .. controls (-5.3,-0.3) and (-5.3,-0.1) .. (-5.3,0);
				\draw [red,->](-5.3,0) .. controls (-5.3,0.1) and (-5.3,0.3) .. (-5.5,0.5) .. controls (-5.7,0.7) and (-6,0.7) .. (-6.3,0.6);
				\draw [red,->](-6.3,0.6) .. controls (-6.6,0.4) and (-6.6,0.2) .. (-6.6,0) .. controls (-6.6,-0.1) and (-6.45,-0.35) .. (-6.2,-0.4);
				\draw [red,->](-6.2,-0.4) .. controls (-5.95,-0.45) and (-5.75,-0.45) .. (-5.65,-0.35) .. controls (-5.55,-0.25) and (-5.5,-0.1) .. (-5.5,0);
				\draw [red,->](-5.5,0) .. controls (-5.5,0.1) and (-5.5,0.3) .. (-5.6,0.4) .. controls (-5.9,0.7) and (-5.8,1.2) .. (-5.5,1.5);
				\draw [red,->](-5.5,1.5) .. controls (-5.3,1.7) and (-4.8,1.8) .. (-4.5,1.5);
				\draw [red](-4.5,1.5) .. controls (-4.4,1.4) and (-4.3,1.2) .. (-4.3,1);
	
				\draw[thick] (1.5,0) -- (2.5,1) -- (2.5,2);			
				\draw[thick] (3.5,1) -- (2.5,1);
				\draw[thick] (1.5,0) -- (2.5,-1);
				\draw[thick] (1.5,0) -- (0,0) -- (-1,-1);
				\draw[thick] (0,0) -- (-1,1);
				\draw[thick] (-1,1) .. controls (-0.5,1.5) and (-1.5,2.5) .. (-2,2) .. controls (-2.5,1.5) and (-1.5,0.5) .. (-1,1);
	
				\draw [fill=black] (-1,1) ellipse (0.05 and 0.05);
				\draw (-1,-1) ellipse (0.1 and 0.1);
				\draw [fill=black] (0,0) ellipse (0.05 and 0.05);
				\draw [fill=black] (2.5,-1) ellipse (0.05 and 0.05);
				\draw (1.5,0) ellipse (0.1 and 0.1);
				\draw [fill=black] (2.5,1) ellipse (0.05 and 0.05);
				\draw [fill=black] (3.5,1) ellipse (0.05 and 0.05);
				\draw [fill=black] (2.5,2) ellipse (0.05 and 0.05);
	
				\draw [red, ->](-0.5,-0.5) .. controls (-0.7,-0.3) and (-0.8,-0.2) .. (-0.8,0);
				\draw [red](-0.8,0) .. controls (-0.8,0.2) and (-0.7,0.3) .. (-0.5,0.5);
				\draw [red,->](-0.5,0.5) .. controls (-0.3,0.7) and (0.1,0.7) .. (0.3,0.6);
				\draw [red,->](0.3,0.6) .. controls (0.5,0.5) and (0.7,0.3) .. (0.7,0.1) .. controls (0.7,-0.3) and (1,-0.5) .. (1.2,-0.6);
				\draw [red,->](1.2,-0.6) .. controls (1.4,-0.7) and (1.7,-0.7) .. (2,-0.5) .. controls (2.2,-0.3) and (2.2,-0.1) .. (2.2,0);
				\draw [red,->](2.2,0) .. controls (2.2,0.1) and (2.2,0.3) .. (2,0.5) .. controls (1.8,0.7) and (1.5,0.7) .. (1.2,0.6);
				\draw [red,->](1.2,0.6) .. controls (0.9,0.4) and (0.9,0.2) .. (0.9,0) .. controls (0.9,-0.1) and (1.05,-0.35) .. (1.3,-0.4);
				\draw [red](1.3,-0.4) .. controls (1.55,-0.45) and (1.75,-0.45) .. (1.85,-0.35);
	
				\draw[thick] (9,0) -- (10,1) -- (10,2);
				\draw[thick] (11,1) -- (10,1);
				\draw[thick] (9,0) -- (10,-1);
				\draw[thick] (9,0) -- (7.5,0) -- (6.5,-1);
				\draw[thick] (7.5,0) -- (6.5,1);
				\draw[thick] (6.5,1) .. controls (7,1.5) and (6,2.5) .. (5.5,2) .. controls (5,1.5) and (6,0.5) .. (6.5,1);
	
				\draw [fill=black] (6.5,1) ellipse (0.05 and 0.05);
				\draw (6.5,-1) ellipse (0.1 and 0.1);
				\draw [fill=black] (7.5,0) ellipse (0.05 and 0.05);
				\draw [fill=black] (10,-1) ellipse (0.05 and 0.05);
				\draw (9,0) ellipse (0.1 and 0.1);
				\draw [fill=black] (10,1) ellipse (0.05 and 0.05);
				\draw [fill=black] (11,1) ellipse (0.05 and 0.05);
				\draw [fill=black] (10,2) ellipse (0.05 and 0.05);
	
				\draw [red, ->](7,-0.5) .. controls (6.8,-0.3) and (6.7,-0.2) .. (6.7,0);
				\draw [red](6.7,0) .. controls (6.7,0.2) and (6.8,0.3) .. (7,0.5);
				\draw [red,->](7,0.5) .. controls (7.2,0.7) and (7.6,0.7) .. (7.8,0.6);
				\draw [red,->](7.8,0.6) .. controls (8,0.5) and (8.2,0.3) .. (8.2,0.1) .. controls (8.2,-0.3) and (8.5,-0.5) .. (8.7,-0.6);
				\draw [red,->](8.7,-0.6) .. controls (8.9,-0.7) and (9.2,-0.7) .. (9.5,-0.5) .. controls (9.7,-0.3) and (9.7,-0.1) .. (9.7,0);
				\draw [red,->](9.7,0) .. controls (9.7,0.1) and (9.7,0.3) .. (9.5,0.5) .. controls (9.3,0.7) and (9,0.7) .. (8.7,0.6);
				\draw [red](8.7,0.6) .. controls (8.4,0.4) and (8.4,0.2) .. (8.4,0);
	
				\draw[red] (-8.8,4.5) node {$w_0$};
				\draw[red] (0.7,5.5) node {$w_1$};
				\draw[red] (8.3,5.5) node {$w_2$};
				\draw[red] (-6.9,1) node {$w_3$};
				\draw[red] (0.7,1) node {$w_4$};
				\draw[red] (8.2,1) node {$w_5$};
			\end{tikzpicture}
			\caption{The first 5 terms in a sequence of strings given by (\ref{StrSeq}). The multiplicity of the circled vertices is two and all other vertices have multiplicity one.} \label{PathWalkExample}
		\end{figure}
		
		One may also notice from Figure~\ref{PathWalkExample} the use of Lemma~\ref{MaxExceptional}(b) in the proof of Lemma~\ref{PathWalk}(a) on the ray segment
		\begin{equation*}
			M(w_1) \rightarrow \cdots \rightarrow M(w_5).
		\end{equation*}
	\end{exam}
	
	The following remark is useful for the next lemma.
	\begin{rem} \label{NonExceptionalCycle}
		Non-exceptional Green walks are periodic. Thus, one can perform a non-exceptional (clockwise or anticlockwise) Green walk to construct a (not necessarily simple) cycle
		\begin{equation*}
			c: \xymatrix{v_0 \ar@{-}[r]^-{x_1} & v_1 \ar@{-}[r]^-{x_2} & v_2 \ar@{-}[r] & \cdots \ar@{-}[r] &  v_{m-1} \ar@{-}[r]^-{x_m} & v_0},
		\end{equation*}
		of $G$ consisting of non-exceptional-edges such that $x_{i+1}^{v_i}$ is the first half-edge in the predecessor (resp. successor) sequence of $x_{i}^{v_i}$ such that $x_{i+1}$ is non-exceptional.
	\end{rem}
	
	\begin{lem}\label{NonRay}
		Let $A=KQ/I$ be a Brauer graph algebra associated to a Brauer graph $G$. Let $M(w_0)$ be the string module associated to a string $w_0=\alpha_1\ldots\alpha_n$.
		\begin{enumerate}[label=(\alph*)]
			\item \begin{enumerate}[label=(\roman*)]
				\item If $e(w_0)$ is a non-exceptional edge in $G$ and $\alpha_n\in Q_1$, then the ray in $_s\Gamma_A$ of source $M(w_0)$ given by adding or deleting from the end of $w_0$ is infinite. Furthermore, each module $M(w_i)$ along the ray is such that $|w_i|>|w_0|$ for all $i>0$.
				\item If $x$ is a non-exceptional edge in $G$ and $w_0=\varepsilon_x$, then both rays in $_s\Gamma_A$ of source $M(w_0)$ are infinite. Furthermore, $|w|>|w_0|$ for any module $M(w) \not\cong M(w_0)$ along any such ray.
			\end{enumerate}
			\item \begin{enumerate}[label=(\roman*)]
				\item If $e(w_0)$ is a non-exceptional edge in $G$ and $\alpha_n\in Q^{-1}_1$, then the ray in $_s\Gamma_A$ of target $M(w_0)$ given by adding or deleting from the end of $w_0$ is infinite. Furthermore, each module $M(w_{-i})$ along the ray is such that $|w_{-i}|>|w_0|$ for all $i>0$.
				\item If $x$ is a non-exceptional edge in $G$ and $w_0=\varepsilon_x$, then both rays in $_s\Gamma_A$ of target $M(w_0)$ are infinite. Furthermore, $|w|>|w_0|$ for any module $M(w) \not\cong M(w_0)$ along any such ray.
			\end{enumerate}
		\end{enumerate}
	\end{lem}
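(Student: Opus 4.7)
The plan is to show in each case that the sequence $(w_j)$ defined by (\ref{StrSeq}) is infinite in the appropriate direction, with $|w_j|>|w_0|$ for all $j\neq 0$. The ray terminates exactly when some $M(w_j)$ lies at the mouth of a tube, which by Lemma~\ref{tubeMouth} occurs iff $w_j$ is a maximal direct or inverse string and hence belongs to $\mathcal{M}$. I shall iterate Lemma~\ref{PathWalk} to repeatedly extend the ray segment while bypassing exceptional edges, so termination never occurs.

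For (a)(i), let $v$ be the vertex with $\widehat{e}(\alpha_n)=x_1^v$ where $x_1=e(w_0)$, and let $v_1$ denote the other vertex of $x_1$. Since $x_1$ is non-exceptional, it is non-truncated at both ends (Remark~\ref{NonConnect}); the loop case is handled by using the distinct half-edge $\overline{x_1^v}$. Write $\mathfrak{C}_{v_1,\alpha_1'}=\alpha_1'\ldots\alpha_m'\beta_1'\ldots\beta_n'$ with $\widehat{s}(\alpha_1')=x_1^{v_1}$ and $\widehat{s}(\beta_1')=x_2^{v_1}$, where $x_2$ is the first non-exceptional edge in the predecessor sequence of $x_1^{v_1}$ at $v_1$ (which exists by the periodicity of non-exceptional Green walks, Remark~\ref{NonExceptionalCycle}). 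Since $\alpha_n\in\mathfrak{C}_v$ and $\beta_n'\in\mathfrak{C}_{v_1}$ lie in distinct cycles, $\alpha_n\neq\beta_n'$, so $w_0\beta_n'^{-1}\ldots\beta_1'^{-1}$ is a valid string. Lemma~\ref{PathWalk}(a) therefore yields a ray segment $M(w_0)\to\cdots\to M(w_{k_1})$ with $e(w_{k_1})=x_3$ non-exceptional, $w_{k_1}$ ending with an arrow $\gamma_r\in Q_1$ satisfying $\widehat{e}(\gamma_r)=\overline{x_3^{v_3}}$, and $|w_i|>|w_0|$ for $0<i\leq k_1$, where $x_3^{v_3}$ is the second step of the non-exceptional clockwise Green walk from $x_1^{v_1}$.

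The crucial observation is that $w_{k_1}$ satisfies the hypotheses of the lemma at the edge $x_3$, with the other vertex $v_3'$ of $x_3$ playing the role of $v$ and $v_3$ playing the role of $v_1$. Proposition~\ref{MorphismGreenWalk}(a)(i) ensures that the continuation of the ray corresponds to the continuation of the non-exceptional clockwise Green walk, so Lemma~\ref{PathWalk}(a) applies anew and yields a further segment up to $w_{k_2}$, with $|w_{k_1+i}|>|w_{k_1}|>|w_0|$. Iterating, each application advances us two steps along the periodic non-exceptional clockwise Green walk, and the sequence $(w_j)$ never terminates. By the cumulative length inequality, $|w_j|>|w_0|$ for all $j>0$.

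For (a)(ii) the argument is identical after choosing a starting half-edge $x^{v_1}$ (one choice per ray), using Lemma~\ref{SimpleRay} for consistency at $S(x)$ and the fact that Lemma~\ref{PathWalk}(a) also covers zero strings. Parts (b)(i) and (b)(ii) follow by dual arguments using Lemma~\ref{PathWalk}(b) and Proposition~\ref{MorphismGreenWalk}(b) on the backward sequence. The main difficulty is the bookkeeping required to verify that the output of one iteration of Lemma~\ref{PathWalk} satisfies the hypotheses of the next---specifically, that $w_{k_j}$ ends with an arrow in $Q_1$ at a non-exceptional edge with the correct half-edge alignment, and that the continuation of the non-exceptional Green walk from this new starting half-edge agrees with the original walk---together with handling of the loop case, where both endpoints of an edge coincide but the two half-edges remain formally distinct.
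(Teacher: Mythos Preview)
Your proposal is correct and follows essentially the same approach as the paper: iterate Lemma~\ref{PathWalk}(a) (resp.\ (b)) along the periodic non-exceptional clockwise (resp.\ anticlockwise) Green walk so that each application produces a new string ending with an arrow (resp.\ formal inverse) at a non-exceptional edge, re-establishing the hypotheses and forcing the sequence $(w_j)$ never to terminate with strictly increasing length. The paper's own proof is terser and does not explicitly invoke Proposition~\ref{MorphismGreenWalk} or discuss the loop/half-edge bookkeeping you flag, but the substance of the argument is identical.
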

	
	\begin{proof}
		The proof relies upon the iterative use of Lemma~\ref{PathWalk}.
		
		(a)(i) Let $(w_j)$ be sequence (\ref{StrSeq}). Let $x_1$ be the non-exceptional edge such that $e(w_0)= x_1$ and let $x_1^{v_0}$ be a half-edge associated to $x_1$ such that $\widehat{e}(\alpha_n)=x_1^{v_0}$. We may perform a non-exceptional clockwise Green walk from $x_1^{v_1} = \overline{x_1^{v_0}}$ to construct a (not necessarily simple) cycle $c$ in $G$ of the form given in Remark~\ref{NonExceptionalCycle}.
		
		Since $\alpha_n \in Q_1$, there exists a non-zero inverse string $\beta^{-1}_1\ldots \beta^{-1}_r$ such that $\widehat{s}(\beta^{-1}_1)=x_1^{v_1}$, $\widehat{e}(\beta^{-1}_r)=x_2^{v_1}$ and $w_0 \beta^{-1}_1\ldots \beta^{-1}_r$ is a string. Thus by Lemma~\ref{PathWalk}(a), there exists a ray
		\begin{equation*}
			M(w_0) \rightarrow M(w_1) \rightarrow \cdots \rightarrow M(w_k) \rightarrow \cdots
		\end{equation*}
		in $_s\Gamma_A$ such that $e(w_k)=x_3$ and $|w_i| > |w_0|$ for all $0<i \leq k$. Furthermore, the last symbol $\alpha$ of $w_k$ is an arrow such that $\widehat{e}(\alpha)=x_3^{v_2}$.
		
		Since $w_k$ satisfies similar properties to $w_0$, we may use the above argument iteratively along the cycle $c$. Thus, the sequence (\ref{StrSeq}) never terminates, and hence, the ray of source $M(w_0)$ given by adding or deleting from the end of $w_0$ is infinite. Moreover, $|w_i| > |w_0|$ for all $i>0$.
		
		(a)(ii) Let $(w_j)$ be sequence (\ref{StrSeq}) and let $x^v$ be a half-edge associated to $x$. There are two rays of source $S(x)$ in $_s\Gamma_A$. These are obtained by choosing $w_1$ such that the first symbol $\beta^{-1}$ of $w_1$ is a formal inverse with either $\widehat{s}(\beta^{-1})=x^v$ or $\widehat{s}(\beta^{-1})=\overline{x^v}$.
		
		A clockwise non-exceptional Green walk from either $x^v$ or $\overline{x^v}$ induces a cycle of non-exceptional edges similar to that in Remark~\ref{NonExceptionalCycle}. The conditions of Lemma~\ref{PathWalk}(a) are satisfied for any zero string associated to a non-exceptional edge, so similar arguments to (i) show that both possible sequences $(w_j)$ starting with $w_0$ are infinite and $|w_i| > |w_0|$ for all $i>0$. Thus, both rays in $_s\Gamma_A$ of source $M(w_0)$ are infinite and $|w|>|w_0|$ for any module $M(w) \not\cong M(w_0)$ along any such ray.
		
		(b) The proof of (b)(i) and (b)(ii) is similar to (a)(i) and (a)(ii) respectively.
	\end{proof}
	
	We now prove the main results of this section.
	
	\begin{thm} \label{EdgeTubes}
		Let $A$ be a representation-infinite Brauer graph algebra associated to a Brauer graph $G$ and let $x$ be an edge in $G$. Then the simple module $S(x)$ and the radical of the projective $P(x)$ belong to exceptional tubes of $_s\Gamma_A$ if and only if $x$ is an exceptional edge.
	\end{thm}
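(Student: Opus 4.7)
The plan is to read off membership in an exceptional tube by tracking the sequence~(\ref{StrSeq}): by Lemma~\ref{tubeMouth}, a string module lies at the mouth of a tube if and only if it belongs to $\mathcal{M}$, and exceptional tubes are precisely those tubes populated by string modules. Thus the question reduces to deciding whether the ray through $S(x)$ ever reaches an element of $\mathcal{M}$, which forces the module into the corresponding exceptional tube. I will also use that $\rad P(x) = \Omega S(x)$ in the stable category and that $\Omega$ is a self-equivalence of $\uMod* A$ sending string modules to string modules; the induced automorphism of $_s\Gamma_A$ preserves the shape of each component and therefore permutes exceptional tubes among themselves. Consequently $S(x)$ and $\rad P(x)$ belong to exceptional tubes simultaneously, and it suffices to prove the result for $S(x)$.

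For the $(\Rightarrow)$ direction I would argue the contrapositive: assume $x$ is non-exceptional. By Remark~\ref{NonConnect}, $x$ is not truncated, so $P(x)$ is biserial and $S(x) \notin \mathcal{M}$; in particular $S(x)$ is not at the mouth of any tube. Applying Lemma~\ref{NonRay}(a)(ii) and Lemma~\ref{NonRay}(b)(ii) to $w_0 = \varepsilon_x$, both rays of source and both rays of target through $S(x)$ in $_s\Gamma_A$ are infinite, with every module on them of strictly positive string length. Since any non-mouth module in a tube admits a finite ray terminating at the mouth, $S(x)$ cannot lie in any tube, and hence not in an exceptional tube.

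For the $(\Leftarrow)$ direction, suppose $x$ is exceptional. If $x$ is truncated, then $P(x)$ is uniserial and $S(x) \in \mathcal{M}$, so Lemma~\ref{tubeMouth} places $S(x)$ at the mouth of a tube, which is exceptional because $S(x)$ is a string module. Otherwise, by Remark~\ref{SubSubtree} fix an exceptional subtree $T'$ of $G$ containing $x$ with connecting vertex $v'$ such that no non-exceptional edge meets $T' \setminus \{v'\}$. The edge $x$ is not a loop (trees have none), so some endpoint $u$ of $x$ satisfies $u \neq v'$; then $\mathfrak{e}_u = 1$, $\val(u) \geq 2$, and all edges at $u$ lie in $T'$. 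A routine check of Definition~\ref{TreeDef}(i)--(iv) shows that the connected component $T''$ of $T' \setminus \{x\}$ containing $u$ is an exceptional subtree of $G$ with connecting vertex $u$. Writing $\mathfrak{C}_{u,\alpha_1} = \alpha_1\beta_1\ldots\beta_n$ with $s(\alpha_1) = x$, the edges $s(\beta_1),\ldots,s(\beta_n)$ are the remaining edges of $T''$ at $u$, and the hypotheses of Lemma~\ref{MaxExceptional}(a) are met for $w_0 = \varepsilon_x$; the string $\beta_n^{-1}\ldots\beta_1^{-1}$ is valid because the $\beta_i$ are consecutive arrows of $\mathfrak{C}_u$. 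The lemma produces a ray
\begin{equation*}
	S(x) = M(w_0) \to M(w_1) \to \cdots \to M(w_k)
\end{equation*}
in $_s\Gamma_A$ with $w_k = \beta_n^{-1}\ldots\beta_1^{-1}$. Because $\mathfrak{e}_u = 1$ and $\val(u) = n+1$, Remark~\ref{MaxString} identifies $w_k^{-1} = \beta_1\ldots\beta_n$ as the maximal direct string of source $s(\beta_1)$ at $u$, so $M(w_k) \in \mathcal{M}$. By Lemma~\ref{tubeMouth}, $M(w_k)$ occupies the mouth of an exceptional tube, and $S(x)$ belongs to the same AR component.

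The main obstacle I anticipate is the combinatorial verification that $T''$ is an exceptional subtree with connecting vertex $u$. One must handle separately the case in which $x$ is incident to $v'$ (so that $T''$ comprises everything in $T'$ on the $u$-side of $x$) and the case in which $x$ sits deeper inside $T'$ (so that $v' \in T' \setminus T''$), and in each case confirm that removing $T''$ does not disconnect $G$ away from $u$ in a way that destroys condition~(ii) of Definition~\ref{TreeDef}. Once this step is in hand, the rest of the proof is a single application of Lemma~\ref{MaxExceptional}(a) together with the identification of $w_k$ via Remark~\ref{MaxString}.
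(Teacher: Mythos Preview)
Your proof is correct, and the reduction via $\rad P(x)=\Omega S(x)$ is a genuine simplification over the paper's argument. The paper treats $S(x)$ and $\rad P(x)$ separately: for $S(x)$ it uses Lemma~\ref{NonRay}(a)(ii) exactly as you do, but for $\rad P(x)$ in the non-exceptional case it passes to $\tau^{-1}\rad P(x)=P(x)/\soc P(x)$, writes the associated string as a concatenation of two maximal uniserial pieces, and then has to strip off a possibly exceptional tail via Lemma~\ref{MaxExceptional}(b) before Lemma~\ref{NonRay}(a)(i) applies. Your $\Omega$-reduction bypasses all of this. For the $(\Leftarrow)$ direction your construction of $T''$ lands on exactly the subtree the paper obtains directly from Remark~\ref{SubSubtree} (the paper picks the vertex $v$ of $x$ so that the maximal exceptional subtree with connecting vertex $v$ already omits $x$); the verification that $T''$ satisfies Definition~\ref{TreeDef}(i)--(iv) is as routine as you suspect, since every vertex of $T''$ other than $u$ is already a non-connecting vertex of $T'$, and $u$ meets the rest of $G$ only through $x$.

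One small point to tighten: the sentence ``preserves the shape of each component and therefore permutes exceptional tubes among themselves'' is not quite enough on its own, because a rank-$1$ exceptional tube and a homogeneous tube have the same shape. What makes the reduction work is the fact (recorded in the paper from \cite{Roggenkamp}) that $\Omega(\mathcal{M})\subseteq\mathcal{M}$; since $\Omega$ is an automorphism of $_s\Gamma_A$ it carries mouths to mouths, so by Lemma~\ref{tubeMouth} it sends exceptional tubes to exceptional tubes, and a finiteness count then forces this to be a bijection on the set of exceptional tubes. You essentially say this with ``sending string modules to string modules'', but it is worth making the appeal to $\mathcal{M}$ explicit rather than to component shape.
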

	\begin{proof}
		$(\Rightarrow:)$ Suppose $x$ is non-exceptional and consider the simple module $S(x)$. This is associated to the zero string $w=\varepsilon_x$. Thus, Lemma~\ref{NonRay}(a)(ii) applies and so both rays of source $S(x)$ in $_s\Gamma_A$ are infinite. Hence, $S(x)$ does not belong to a tube. 
		
		Now consider the module $\rad P(x)$ and instead let $w$ be the string such that $M(w)=P(x) / \soc P(x) = \tau^{-1} \rad P(x)$. We aim to show that $M(w)$ does not belong to a tube, since it then follows that $\rad P(x)$ does not belong to a tube. Note that $w=w' w''$, where $M(w'),M(w'')\in\mathcal{M}$. That is, $w'$ is a maximal inverse string and $w''$ is a maximal direct string such that $e(w')=x=s(w'')$.
		
		If $e(w)$ is non-exceptional, then we can apply Lemma~\ref{NonRay}(a)(i) to show that the ray of source $M(w)$ given by adding or deleting from the end of the string is infinite. Otherwise if $e(w)$ is exceptional, then suppose $w''=\beta_1\ldots\beta_n$ and let $r$ be the greatest integer such that $\widehat{e}(\beta_r)$ is the first half-edge associated to a non-exceptional edge in the predecessor sequence from $\widehat{e}(\beta_n)$. Then by Lemma~\ref{MaxExceptional}(b), there exists a ray
		\begin{equation*}
			\cdots \rightarrow M(w_{-k}) \rightarrow \cdots \rightarrow M(w_{-1}) \rightarrow M(w_0)
		\end{equation*}
		in $_s\Gamma_A$ such that $w_0=w'\beta_1\ldots \beta_r$ and $w_{-k}=w$. The string $w_0$ satisfies the conditions of Lemma~\ref{NonRay}(a)(i), and therefore the ray in $_s\Gamma_A$ of source $M(w_0)$ given by adding or deleting from the end of $w_0$ is infinite. This is contained within the ray of source $M(w_{-k})$ given by adding or deleting from the end of $w_{-k}$, and so this ray is also infinite. A similar argument shows that the other ray of source $M(w)$, which is given by adding or deleting from the start of the string, is infinite -- we simply use the same arguments with the string $w^{-1}$.
		
		$(\Leftarrow:)$ Suppose $x$ is an exceptional edge. If $x$ is truncated, then $P(x)$ is uniserial and $S(x)\in \mathcal{M}$, $\rad P(x) \in \mathcal{M}$. Hence, it follows trivially from Lemma~\ref{tubeMouth} that $S(x)$ and $\rad P(x)$ belong to an exceptional tube. So suppose instead that $x$ is non-truncated. Let $v$ be a vertex incident to $x$ and consider the maximal exceptional subtree $T$ (Remark~\ref{SubSubtree}) with connecting vertex $v$. We choose $v$ such that $T$ does not contain $x$.
		
		To show that $S(x)$ belongs to an exceptional tube, we first note that $\mathfrak{e}_v=1$ (since $x$ is exceptional). Let $w=\beta_n^{-1}\ldots\beta_1^{-1}$ be the maximal inverse string such that $\widehat{s}(\beta_n^{-1})=x^v$. Then $e(\beta_i^{-1})$ belongs to $T$ and $e(\beta_i^{-1}) \neq x$ for all $i$. Hence by Lemma~\ref{MaxExceptional}(a), there exists a ray
		\begin{equation*}
			M(w_0) \rightarrow M(w_1) \rightarrow \cdots \rightarrow M(w_k) \rightarrow \cdots
		\end{equation*}
		in $_s\Gamma_A$ such that $w_0=\varepsilon_x$ and $w_k=w$. But $M(w_0) = S(x)$ and $M(w_k) \in \mathcal{M}$. Thus, $M(w_k)$ sits at the mouth of an exceptional tube by Lemma~\ref{tubeMouth} and hence, $S(x)$ belongs to an exceptional tube.
		
		To show that $\rad P(x)$ also belongs to an exceptional tube, let $w'=\gamma_1\ldots\gamma_m$ be the maximal direct string such that $\widehat{e}(\gamma_n)=\overline{x^v}$. Then a similar argument to that used above for $S(x)$ shows that there exists a ray
		\begin{equation*}
			M(w_0) \rightarrow M(w_1) \rightarrow \cdots \rightarrow M(w_k) \rightarrow \cdots
		\end{equation*}
		in $_s\Gamma_A$ such that $w_0=w'$ and $w_k=w' w$. Since $M(w_0) \in \mathcal{M}$ and $M(w_k) = \rad P(x)$, we conclude that $\rad P(x)$ belongs to an exceptional tube.
	\end{proof}
	
	The above theorem as stated shows that the modules $S(x)$ and $\rad P(x)$ for an exceptional edge $x$ belong to exceptional tubes of $_s\Gamma_A$. However, $S(x)$ and $\rad P(x)$ may not necessarily belong to the same exceptional tube. This is due to the construction in the latter part of the proof, where we show that there exist modules $M_1,M_2 \in \mathcal{M}$ with $M_1 \neq M_2$, such that $S(x)$ and $M_1$ belong to the same tube and $\rad P(x)$ and $M_2$ belong to the same tube. We cannot however, guarantee that $M_1$ and $M_2$ belong to the same tube. However, we can use Theorem~\ref{tubeWalks} to describe when $S(x)$ and $\rad P(x)$ do belong to the same exceptional tube of $_s\Gamma_A$.
	
	\begin{cor}
		Given an exceptional edge $\xymatrix@1{u \ar@{-}[r]^x & v}$ in a Brauer graph, $S(x)$ and $\rad P(x)$ belong to the same exceptional tube if and only if $x^u$ and $x^v$ occur within the same double-stepped Green walk.
	\end{cor}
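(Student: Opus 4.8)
The plan is to reduce everything to the single relation $\rad P(x)=\Omega S(x)$, which holds because $P(x)$ is the projective cover of $S(x)$. By Theorem~\ref{EdgeTubes} both $S(x)$ and $\rad P(x)$ lie in exceptional tubes, and since $\Omega$ is a stable self-equivalence it permutes the components of the stable Auslander-Reiten quiver; hence $S(x)$ and $\rad P(x)=\Omega S(x)$ lie in the same component if and only if $\Omega$ stabilises the tube $\mathcal T$ containing $S(x)$. So the whole statement comes down to deciding when $\Omega$ fixes $\mathcal T$.

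For this I would use the description of $\Omega$ on the set $\mathcal M$ recalled before Theorem~\ref{tubeWalks}: following \cite{Roggenkamp}, $\Omega$ advances each module of $\mathcal M$ by exactly one step of the single-stepped Green walk, while by Lemma~\ref{tubeMouth} and Theorem~\ref{tubeWalks} the exceptional tubes are parametrised by the double-stepped Green walks, a single-stepped walk $W$ of length $\ell$ splitting into one double-stepped walk when $\ell$ is odd and into two when $\ell$ is even. Applying $\Omega$ to the mouth modules of $\mathcal T$ therefore shifts the underlying single-stepped walk by one step; when $\ell$ is odd this preserves the unique double-stepped sub-walk, and when $\ell$ is even it interchanges the two halves. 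Thus $\Omega$ fixes $\mathcal T$ precisely when the single-stepped Green walk $W$ underlying $\mathcal T$ has odd length.

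It remains to check that, for an exceptional edge $x$, the single-stepped walk $W$ underlying the tube of $S(x)$ passes through both half-edges $x^u$ and $x^v$, and that $x^u$ and $x^v$ are separated in $W$ by an odd number of steps; granting this, ``$x^u$ and $x^v$ lie in a common double-stepped Green walk'' holds exactly when $W$ is not split, i.e.\ when $\ell=|W|$ is odd, which by the previous paragraph is exactly the condition for $S(x)$ and $\rad P(x)$ to share a tube. To see that $W$ passes through $x^u$ and $x^v$ I would, for $x$ non-truncated, run the ray constructions from the proof of Theorem~\ref{EdgeTubes} out of $S(x)=M(\varepsilon_x)$, resolving the ambiguity at the zero string via Lemma~\ref{SimpleRay}: by Proposition~\ref{MorphismGreenWalk} the two rays out of $S(x)$ reach mouth modules of its tube whose half-edges lie on the Green walk through $x^v$ and on the Green walk through $x^u$ respectively, and since both mouth modules lie in the one tube $\mathcal T$ and a half-edge determines its Green walk uniquely, these walks coincide and equal $W$. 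For the parity one uses that $W$, where it enters the exceptional subtree containing $x$, runs around it as an Euler tour -- all vertices of that subtree except the connecting one having multiplicity $1$ because $x$ is exceptional -- so the portion of $W$ strictly between the two sides of $x$ is a complete tour of the sub-branch hanging off one endpoint of $x$, of even length, whence $x^u$ and $x^v$ are an odd number of steps apart.

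The truncated-edge case is separate but easier, since then $S(x)$ and $\rad P(x)$ already lie in $\mathcal M$ at tube mouths and $\rad P(x)=\Omega S(x)$ is obtained from $S(x)$ by a single Green-walk step, so the two relevant half-edges are one step apart and the same parity bookkeeping applies. The step I expect to be the main obstacle is the identification of exactly which double-stepped Green walk is attached to the tube of $S(x)$: applying Proposition~\ref{MorphismGreenWalk} requires keeping careful track of the distinction between clockwise and anticlockwise Green walks and of the $\overline{(\cdot)}$ conventions in the bijection between $\mathcal M$ and the half-edges, and only once one knows both rays lead onto the same single-stepped walk can the Euler-tour count be brought to bear to pin down the parity.
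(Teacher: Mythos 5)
Your overall strategy is genuinely different from the paper's. The paper never argues via $\Omega$-stabilisation of a tube or via parities of Green walks: it uses the ray constructions of Theorem~\ref{EdgeTubes} to exhibit explicit mouth modules $M_1$, $M_2$ in the tubes of $S(x)$ and $\rad P(x)$, observes that $M_1=\Omega(M_1')$ and $M_2=\Omega(M_2')$ where $M_1',M_2'\in\mathcal{M}$ correspond to the half-edges $x^u$ and $x^v$, and concludes at once that the tubes coincide if and only if $M_1=\tau^i M_2$, if and only if $M_1'=\tau^i M_2'$, if and only if $x^u$ and $x^v$ lie on one double-stepped Green walk. Your reduction (``same tube iff $\Omega$ fixes the tube of $S(x)$ iff the underlying single-stepped walk has odd length''), combined with the observation that an exceptional edge is a bridge, so that the walk through $x^v$ must return through $x^u$ after an even tour of the branch beyond $x$, is a viable alternative: the $\Omega$-stabilisation step and the parity analysis are correct, and your Euler-tour count is exactly what reconciles the two formulations. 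What the paper's route buys is that it needs no parity bookkeeping at all, and in particular never needs the (true, but unproved in the paper) facts that $x^u$ and $x^v$ lie on a common single-stepped walk at odd distance.

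The one step of your sketch that fails as written is the identification of the walk attached to the tube of $S(x)$: you argue that the two mouth modules reached from $S(x)$ lie in one tube, hence the Green walks through their half-edges coincide and pass through $x^u$ and $x^v$. But the bijection between $\mathcal{M}$ and half-edges intertwines $\Omega$ with the Green-walk step only up to the involution $\overline{(\,\cdot\,)}$ (this is precisely the form of Roggenkamp's result recalled before Theorem~\ref{tubeWalks}), and as a consequence mouth modules of a single exceptional tube can correspond to half-edges lying on \emph{different} single-stepped walks. For example, take a $4$-cycle with one pendant (exceptional) edge $x$ attached at a vertex of the cycle, all multiplicities $1$: the rank-$3$ tube containing $S(x)$ has mouth modules corresponding to the truncated half-edge of $x$, which lies on the outer walk, and to two half-edges lying on the inner walk (one checks this directly by computing $\Omega^2(S(x))$ and $\Omega^4(S(x))$). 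So ``both mouth modules lie in one tube, hence the walks through their half-edges coincide'' is not a valid inference; note also that only one of the two sectional paths of source $S(x)$ terminates at the mouth, the other being infinite. The facts you need are nevertheless true and are better obtained in the opposite order: the bridge argument applied to the single-stepped walk through $x^v$ already shows that this walk contains $x^u$ at odd distance, and to see that this particular walk governs the $\Omega$-action on the tube of $S(x)$ you must still compute the half-edge of one mouth module of that tube, via the ray of Theorem~\ref{EdgeTubes} (or Lemma~\ref{MaxExceptional} and Proposition~\ref{MorphismGreenWalk}) together with Roggenkamp's correspondence -- which is, in substance, the first half of the paper's own proof. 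With that repair, and the analogous bookkeeping in the truncated case, your argument does go through.
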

	\begin{proof}
		Let $u,v$ be the vertices connected to $x$. Let $w$ be the string such that $M(w)=\rad P(x)$. Then $w=w'w''$, where $w'$ is the maximal direct with last symbol $\alpha$ such that $\widehat{e}(\alpha)=x^u$ and $w''$ is the maximal inverse string with first symbol $\beta$ such that $\widehat{s}(\alpha)=x^v$. Let $M_1=M(w'')$ and $M_2=M(w')$. Then by the construction in the proof of Theorem~\ref{EdgeTubes}, $S(x)$ and $M_1$ belong to the same tube and $\rad P(x)$ and $M_2$ belong to the same tube. In the case where $x$ is truncated, we actually have $S(x)=M_1$ and $\rad P(x)=M_2$
		
		Recall from Section~\ref{ExcepTubesSec} that to each half-edge $y^t$ in $G$, we have a corresponding string module $M=M(w''') \in \mathcal{M}$. If $t$ is a truncated vertex, then $w'''=\varepsilon_y$. Otherwise, $w'''$ is a direct string with first symbol $\gamma$ such that $\widehat{s}(\gamma)=y^t$. Also recall from \cite[Remark 3.6]{Roggenkamp} that if the $i$-th step along a Green walk from $\overline{y^t}$ is a half-edge $\overline{y_i^{t_i}}$, then $\Omega^i(M) \in \mathcal{M}$ corresponds to the half-edge $y_i^{t_i}$.
		
		Let $\alpha'$ be the first symbol of $w'$ and $\beta'$ be the last symbol of $w''$. Then $\widehat{e}(\beta')$ and $\widehat{s}(\alpha')$ correspond to the modules $M_1$ and $M_2$ respectively. Let $M'_1$ and $M'_2$ be the modules in $\mathcal{M}$ corresponding to the half-edges $x^u$ and $x^v$ respectively. Since $\overline{\widehat{e}(\beta')}$ is the first step along a Green walk from $\overline{x^u}=x^v$ and $\overline{\widehat{s}(\alpha')}$ is the first step along a Green walk from $\overline{x^v}=x^u$, we have $\Omega(M'_1)=M_1$ and $\Omega(M'_2)=M_2$. Note that $M_1$ belongs to the same tube as $M_2$ if and only if $M_1 = \tau^i M_2 = \Omega^{2i} M_2$ for some $i$. This is possible if and only if $M'_1 = \tau^i M'_2$ for some $i$. Since $M'_1$ and $M'_2$ correspond to $x^u$ and $x^v$ respectively, it follows that $x^u$ and $x^v$ belong to the same double-stepped Green walk, as required.
	\end{proof}
	
	\begin{thm} 
		Let $A$ be a representation-infinite Brauer graph algebra and let $x$ and $y$ be (not necessarily distinct) non-exceptional edges of the associated Brauer graph $G$. Then $S(x)$ and $\rad P(y)$ belong to the same component of $_s\Gamma_A$ if and only if either $A$ is 1-domestic or there exists a (not necessarily simple) path 
		\begin{equation*}
			p: \xymatrix@1{u_0 \ar@{-}[r]^{x_1} & v_1 \ar@{-}[r]^{x_2} & v_2 \ar@{-}[r] & \cdots \ar@{-}[r] & v_{n-2} \ar@{-}[r]^-{x_{n-1}} & v_{n-1} \ar@{-}[r]^-{x_n} & u_1}
		\end{equation*}
		of even length in $G$ consisting of non-exceptional edges such that
		\begin{enumerate}[label=(\roman*)]
			\item $x_1=x$ and $x_n=y$;
			\item every edge $x_i$ is not a loop;
			\item if $x_i \neq x_{i+1}$ then $\mathfrak{e}_{v_i}=1$;
			\item if $x_i = x_{i+1}$ then $\mathfrak{e}_{v_i}=2$;
			\item $x_i$ and $x_{i+1}$ are the only non-exceptional edges incident to $v_i$ in $G$.
		\end{enumerate}
	\end{thm}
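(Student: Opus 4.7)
The plan is to prove each direction separately. For $\Leftarrow$, in the $1$-domestic case, by \cite{bocianSkow} we have $m=1$ in the Erdmann--Skowro\'{n}ski decomposition of $_s\Gamma_A$, yielding a unique $\mathbb{Z}\tilde{A}_{p,q}$ component. Since $S(x),\rad P(y)$ are string modules (so outside homogeneous tubes) and $x,y$ are non-exceptional (so outside exceptional tubes by Theorem~\ref{EdgeTubes}), both must lie in this unique component. If instead a path of the stated form exists, I initialise the sequence (\ref{StrSeq}) at $w_0=\varepsilon_x=\varepsilon_{x_1}$ (so $M(w_0)=S(x)$) and iterate Lemma~\ref{PathWalk}(a) through $v_1,v_2,\ldots,v_{n-1}$. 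Condition (v) forces the non-exceptional clockwise Green walk at $v_i$ to move from $x_i^{v_i}$ to $\overline{x_{i+1}^{v_{i+1}}}$; conditions (ii)--(iv) control the string data fed into Lemma~\ref{PathWalk}(a), distinguishing $\mathfrak{e}_{v_i}=1$ with $x_{i+1}\ne x_i$ (walk passes through to a distinct edge) from $\mathfrak{e}_{v_i}=2$ with $x_{i+1}=x_i$ (walk reflects, needing one extra wrap around $v_i$ in the appended maximal substring). After $n/2$ iterations the resulting string $w_K$ ends at $y$ with a final maximal substring matching the biserial description of $\rad P(y)=M(u_1u_2)$ as a concatenation of maximal direct and inverse halves around $y$; from this I verify that $M(w_K)$ and $\rad P(y)$ share a component.

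For $\Rightarrow$, assume $A$ is not $1$-domestic and $S(x),\rad P(y)$ share a component $\mathcal{C}$ of $_s\Gamma_A$. By Theorem~\ref{EdgeTubes} and since both modules are string modules, $\mathcal{C}$ has shape $\mathbb{Z}\tilde{A}_{p,q}$ or $\mathbb{Z}A_\infty^\infty$; in either case any two modules are joined by a finite zigzag of irreducible morphisms and $\tau^{\pm 1}$-translations. I extract the path by recording, along such a zigzag from $S(x)$ to $\rad P(y)$, the string endpoint at each intermediate string module. By Proposition~\ref{MorphismGreenWalk} and the combinatorics of Lemma~\ref{PathWalk}, successive recorded endpoints differ by one double-step of a non-exceptional Green walk, yielding a pair $(x_i,x_{i+1})$ of consecutive edges meeting at a common vertex $v_i$; concatenation gives the required edge sequence $x_1=x,x_2,\ldots,x_n=y$. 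Condition (v) follows directly from the defining property of non-exceptional Green walks (any further non-exceptional edge at $v_i$ would have been visited first); (ii)--(iv) emerge from analysing how the maximal substring produced by Lemma~\ref{PathWalk}(a) wraps around $v_i$ under each given multiplicity and loop configuration; and the even length of the path reflects $\tau=\Omega^2$ pairing Green-walk steps two at a time.

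The chief obstacle is the $\Rightarrow$ direction, specifically showing that conditions (ii)--(iv) \emph{must} hold rather than merely \emph{can} hold. A careful case analysis is required: at a non-reflecting vertex, $\mathfrak{e}_{v_i}>1$ would insert extra arrows into the appended substring and shift the endpoint of $w_{i+1}$ away from the next recorded edge; at a reflecting vertex, $\mathfrak{e}_{v_i}\ne 2$ would either fail to return to $x_i$ or return via an inadmissible string; and a loop at some $x_i$ would corrupt the ``first-available-non-exceptional'' half-edge rule that underlies the ambient non-exceptional Green walk. Lemma~\ref{MaxExceptional} and condition (v) are essential to guarantee that the extracted walk never silently detours through exceptional edges. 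Once these configurations are ruled out, the parity falls out of $\tau=\Omega^2$ and the identification of $\rad P(y)$ with an $\Omega$-shift of $P(y)/\soc P(y)$ within $\mathcal{C}$.
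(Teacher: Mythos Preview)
Your $(\Leftarrow)$ direction is broadly workable but differs from the paper's argument. Rather than iterating Lemma~\ref{PathWalk} all the way from $S(x)$ to something near $\rad P(y)$, the paper works one step at a time: for each $i$ it uses Lemma~\ref{MaxExceptional} to build two perpendicular rays from $S(x_i)$ landing on a uniserial direct summand of $\rad P(x_{i+1})/\soc P(x_{i+1})$, which is adjacent to $\rad P(x_{i+1})$ via the standard almost split sequence. This gives $S(x_i)\sim\rad P(x_{i+1})$ and, symmetrically, $\rad P(x_i)\sim S(x_{i+1})$; chaining these and using that $n$ is even yields $S(x_1)\sim\rad P(x_2)\sim S(x_3)\sim\cdots\sim\rad P(x_n)$. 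Your single long iteration produces a string $w_K$ on the ray through $S(x)$, but you never say how $M(w_K)$ is actually linked to $\rad P(y)$ in $_s\Gamma_A$; that missing bridge is exactly the irreducible morphism $\rad P(y)\to\rad P(y)/\soc P(y)$ the paper exploits.

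Your $(\Rightarrow)$ direction has a genuine gap. Recording endpoints along an arbitrary zigzag does not give what you claim: Proposition~\ref{MorphismGreenWalk} tracks endpoints only along a \emph{single} ray (end-operations only), and once a zigzag changes direction you are now modifying the \emph{start} of the string, so the ``successive recorded endpoints differ by one double-step'' assertion fails. The paper avoids this by choosing one specific pair of perpendicular lines: $L$ through $\rad P(y)$ (end-operations) and $L'$ through $S(x)$ (start-operations). These meet at a single module $M(w)$, and the whole point is that $w$ can be decomposed in two ways. From the $\rad P(y)$ side, Lemma~\ref{PathWalk} gives $w$ as an alternation of \emph{longest} inverse and \emph{shortest} direct substrings $w^+_j,w^-_j$ along a non-exceptional Green walk; from the $S(x)$ side, it is an alternation of \emph{shortest} inverse and \emph{longest} direct substrings $w'^-_j,w'^+_j$ along a non-exceptional Green walk in the opposite sense. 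The equalities $w'^-_j=w^+_j$ and $w'^+_j=w^-_j$ (shortest $=$ longest) are precisely what force $\mathfrak{e}_{v_i}=1$ when $x_i\neq x_{i+1}$, $\mathfrak{e}_{v_i}=2$ when $x_i=x_{i+1}$, and rule out loops and extra non-exceptional edges at $v_i$. Your case analysis gestures at these conclusions but does not identify the mechanism that produces them; without the perpendicular-line intersection and the shortest-equals-longest comparison, conditions (ii)--(iv) do not fall out.
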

	\begin{proof}
		($\Leftarrow$:) Suppose $A$ is 1-domestic. Then there exists precisely one component of $_s\Gamma_A$ that is not a tube. Thus by Theorem~\ref{EdgeTubes}, the simple modules and the radicals of the projectives associated to the non-exceptional edges of $G$ all belong to the same component of $_s\Gamma_A$. So suppose instead $A$ is not 1-domestic and a path $p$ satisfying the properties (i)-(v) exists. We will show that $S(x_i)$ is in the same component of $_s\Gamma_A$ as $\rad P(x_{i+1})$ and $S(x_{i+1})$ is in the same component of $_s\Gamma_A$ as $\rad P(x_i)$ for all $i$. Since $p$ is of even length, it will follow from this that $S(x)$ is in the same component as $\rad P(y)$.
		
		There are two possible cases, which arise from conditions (iii) and (iv) respectively. The proof for both cases is similar. In either case, if there are no exceptional edges incident to $v_i$, then $S(x_i)$ is a direct summand of $\rad P(x_{i+1}) / \soc P(x_{i+1})$. Thus, there exists an irreducible morpism $S(x_i) \rightarrow \rad P(x_{i+1})$ and so $S(x_i)$ and $\rad P(x_{i+1})$ belong to the same component of $_s\Gamma_A$. Suppose instead there are exceptional edges incident to $v_i$ and let $\mathfrak{C}_{v_i,\gamma_1}=\gamma_1\ldots\gamma_r\delta_1\ldots\delta_t$, where $s(\gamma_1)=x_i$ and $s(\delta_1)=x_{i+1}$. If $x_i = x_{i+1}$ and $\mathfrak{e}_{v_i}=2$ (condition (iv)) then $t=r$ and $\delta_1=\gamma_1,\ldots,\delta_t=\gamma_r$. Note that (in both cases) the edges $s(\delta_2),\ldots,s(\delta_t)$ and $e(\gamma_1),\ldots,e(\gamma_{r-1})$ each belong to an exceptional subtree with connecting vertex $v_i$ and that $x_i \neq s(\delta_2),e(\gamma_{r-1})$. By Lemma~\ref{MaxExceptional}(a), there exists a ray
		\begin{equation*}
			R: M(w_0) \rightarrow M(w_1) \rightarrow \cdots \rightarrow M(w_k) \rightarrow \cdots
		\end{equation*}
		in $_s\Gamma_A$ such that $w_0 = \varepsilon_{x_i}$ and $w_k=\delta^{-1}_t\ldots \delta^{-1}_2$. Since $(w_k)^{-1}\gamma_1\ldots\gamma_{r-1}$ is a string, it follows from Lemma~\ref{MaxExceptional}(b) that there exists a ray
		\begin{equation*}
			R': \cdots \rightarrow M(w'_{-l}) \rightarrow \cdots \rightarrow M(w'_{-1}) \rightarrow M(w'_0)
		\end{equation*}
		in $_s\Gamma_A$ (which is perpendicular to $R$) such that $w'_0=(w_k)^{-1}$ and $w'_{-l}=(w_k)^{-1}\gamma_1\ldots\gamma_{r-1}$. Now $M(w'_{-l})$ is direct summand of $\rad P(x_{i+1}) / \soc P(x_{i+1})$, and thus, there exists an irreducible morphism $\rad P(x_{i+1}) \rightarrow M(w'_{-l})$. Hence, $S(x_i)=M(w_0)$ is in the same component of $_s\Gamma_A$ as $\rad P(x_{i+1})$, as required. A similar argument shows $S(x_{i+1})$ is in the same component of $_s\Gamma_A$ as $\rad P(x_i)$. Since $n$ is even and $x_1=x$ and $x_n=y$, this implies $S(x)$ and $\rad P(y)$ lie in the same component of $_s\Gamma_A$.
		
		($\Rightarrow$:) Suppose $S(x)$ and $\rad P(y)$ lie in the same component of $_s\Gamma_A$ and consider the strings $w_0$ and $w'_0$ such that $M(w_0)=\rad P(y)$ and $M(w'_0)=S(x)$. Let $L$ be the line in $_s\Gamma_A$ through $M(w_0)$ given by adding or deleting from the end of $w_0$ and let $L'$ be the line in $_s\Gamma_A$ through $M(w'_0)$ given by adding or deleting from the start of $w'_0$. Then $L$ is perpendicular to $L'$ and there exists a module $M(w)$ along $L$ and a module $M(w')$ along $L'$ such that $M(w)=M(w')$. Note that this trivially implies that $w=w'$. We aim to show that this implies that either there exists a path $p$ of the form given in the theorem statement, or $G$ contains precisely one non-exceptional edge, which is a loop (and hence, $A$ is 1-domestic).
		
		First suppose the module at the intersection point of $L$ and $L'$ lies along the ray of target $M(w_0)$. Note that $w_0$ is of the form $\gamma_1\ldots\gamma_r\delta^{-1}_1\ldots\delta^{-1}_t$, where $\gamma_1\ldots\gamma_r$ is maximal direct and $\delta^{-1}_1\ldots\delta^{-1}_t$ is maximal inverse. Let $d$ be the greatest integer such that $e(\delta^{-1}_d)$ is non-exceptional and let $w_{-k}$ be the string $\gamma_1\ldots\gamma_r\delta^{-1}_1\ldots\delta^{-1}_d$. Then it follows from Lemma~\ref{MaxExceptional}(a) that there exists a ray
		\begin{equation*}
			M(w_{-k}) \rightarrow M(w_{-k+1}) \rightarrow \cdots \rightarrow M(w_0) \rightarrow \cdots 
		\end{equation*}
		in $_s\Gamma_A$ such that $|w_{-i}|>|w_{-k}|>0$ for all $0\leq i < k$. Suppose $\widehat{s}(\delta^{-1}_1)=y^u$. Then it follows that $e(\delta^{-1}_d)$ is the edge associated to the first step along a non-exceptional Green walk from $y^u$. Using Lemma~\ref{PathWalk}(b) iteratively starting with the string $w_{-k}$ and the half-edge $\overline{\widehat{e}(\delta^{-1}_d)}$, we conclude that if $M(w_{-i})$ is a module along the ray of target $M(w_0)$ such that $w_{-i}$ ends at a non-exceptional edge $z$, then $z$ belongs to the cycle $c$ of $G$ constructed by performing a non-exceptional Green walk from $y^u$ (described in Remark~\ref{NonExceptionalCycle}). In particular, since $L$ and $L'$ intersect at a module along the ray of target $M(w_0)$, there exists a module $M(w_{-i})$ along this ray such that $e(w_{-i})=x$. Thus, $x$ belongs to $c$. So perform a non-exceptional Green walk from $y^u$ to construct a path
		\begin{equation} \tag{$\dagger$} \label{yPath}
			q: \xymatrix@1{u_0 \ar@{-}[r]^-{y_1} & u_1 \ar@{-}[r] & \cdots \ar@{-}[r] & u_{m-2} \ar@{-}[r]^-{y_{m-1}} &  u_{m-1} \ar@{-}[r]^-{y_m} & u_m}
		\end{equation}
		of even length in $G$, where $y_m=y$, $u=u_{m-1}$ and $y_1=x$. Also note that $\widehat{e}(\delta^{-1}_d)=y_{m-1}^{u_{m-1}}$.
		
		Use Lemma~\ref{PathWalk}(b) iteratively along $q$ starting with $w_{-k}$ until we obtain a ray
		\begin{equation*}
			M(w_{-l}) \rightarrow \cdots \rightarrow M(w_{-k-1}) \rightarrow M(w_{-k}) \rightarrow \cdots \rightarrow M(w_0) 
		\end{equation*}
		along $L$ such that $e(w_{-l})=y_1=x$. Note that it follows from Lemma~\ref{PathWalk}(b) that $|w_{-i}|>|w_{-k}|>0$ for all $i>k$. It also follows that $w_{-l}$ is of the form
		\begin{equation*}
			w_{-l}= \gamma_1\ldots\gamma_r w^+_{m-1} w^-_{m-2}w^+_{m-3}\ldots w^-_2 w^+_1,
		\end{equation*}
		where $w^-_i$ is the direct string of shortest length with first symbol $\alpha_i$ and last symbol $\beta_i$ such that $\widehat{s}(\alpha_i)=y_{i+1}^{u_i}$ and $\widehat{e}(\beta_i)=y_i^{u_i}$, and $w^+_i$ is the inverse string of greatest length with first symbol $\zeta^{-1}_i$ and last symbol $\eta^{-1}_i$ such that $\widehat{s}(\zeta^{-1}_i)=y_{i+1}^{u_i}$ and $\widehat{e}(\eta^{-1}_i)=y_i^{u_i}$. Also note that further use of Lemma~\ref{PathWalk}(b) implies that $w_{-l}$ is a prefix to any string further along the ray of target $M(w_0)$.
		
		Now consider the zero string $w'_0=\varepsilon_x$. We aim to construct the string $w_{-l}$ by adding to the start of $w'_0$, and hence locate $M(w_{-l})$ along the line $L'$. We note that since $w'_0$ is a zero string and since the last symbol $\beta^{-1}$ of $w_{-l}$ is a formal inverse, $w_{-l}$ can only be constructed by adding cohooks to the start of $w'_0$, and hence, $w_{-l}$ lies along the ray of target $M(w'_0)$. In particular, the last symbol of $w'_{-1}={_c(w'_0)}$ must be $\beta^{-1}$.
		
		Suppose $\widehat{e}(\beta^{-1})=x^{v_1}$. Let $x_1=x$ and label the $(i-1)$-th step along a non-exceptional Green walk from $x_1^{v_1}$ by $x_i^{v_i}$. The $i$-th use of Lemma~\ref{PathWalk}(b) on $w'_0$ produces a string $w'_{-k_i}$ along the ray of target $M(w'_0)$ of the form
		\begin{equation*}
			w'_{-k_i}=w'^+_{2i}w'^-_{2i-1} \ldots w'^+_4 w'^-_3 w'^+_2 w'^-_1,
		\end{equation*}
		where $w'^-_j$ is the inverse string of shortest length with first symbol $\zeta'^{-1}_j$ and last symbol $\eta'^{-1}_j$ such that $\widehat{s}(\zeta'^{-1}_j)=x_j^{v_{j-1}}$ and $\widehat{e}(\eta'^{-1}_j)=x_j^{v_j}$, and $w'^+_j$ is the direct string of greatest length with first symbol $\alpha'_j$ and last symbol $\beta'_j$ such that $\widehat{s}(\alpha'_j)=x_{j+1}^{v_j}$ and $\widehat{e}(\beta'_j)=x_j^{v_j}$. Moreover, $w_{-k_i}$ is a suffix of any string $w'$ further along the ray of target $M(w'_0)$.
		
		Since the module $M(w_{-l})$ exists along the ray of target $M(w'_0)$, we may conclude that $w'^-_j=w^+_j$ and $w'^+_j=w^-_j$. Thus, $\widehat{s}(\alpha'_j)=\widehat{s}(\alpha_j)$, $\widehat{s}(\zeta'^{-1}_j)=\widehat{s}(\zeta^{-1}_j)$, $\widehat{e}(\beta'_j)=\widehat{e}(\beta_j)$ and $\widehat{e}(\eta'^{-1}_j)=\widehat{e}(\eta^{-1}_j)$. Hence, $x_{j+1}^{v_j}=y_{j+1}^{u_j}$ and $x_j^{v_j}=y_j^{u_{j}}$. Since each $x_j^{v_j}$ is a step along a non-exceptional Green walk from $x_{j-1}^{v_{j-1}}$ and each $y_{j-1}^{u_{j-1}}$ is a step along a non-exceptional Green walk from $y_j^{u_j}$, this implies that $x_j^{v_j}$ is the first non-exceptional successor to $x_{j-1}^{v_{j-1}}$ and $x_{j-1}^{v_{j-1}}$ is the first non-exceptional successor to $x_j^{v_j}$. This is possible only if there is no half-edge $z^{u_i}$ incident to any $u_i$ in $q$ such that $z$ is non-exceptional and $z^{u_i}\neq y_i^{u_i}, y_{i+1}^{u_i}$. Moreover, $w'^-_j=w^+_j$ and $w'^+_j=w^-_j$ only if $\mathfrak{e}_{v_j} = 1$ if $x_j^{v_j} \neq x_{j+1}^{v_j}$ or $\mathfrak{e}_{v_j} = 2$ if $x_j^{v_j} = x_{j+1}^{v_j}$. Thus, either $q$ is a path of the form $p$ in the theorem statement, or $q$ is a path of length 2 along a loop $x$ in $G$ with incident vertex $v$ such that $\mathfrak{e}_{v} = 1$ and $x$ is the only non-exceptional edge of $G$. In the latter case, $A$ is 1-domestic, as required.
		
		Next suppose the module at the intersection point of $L$ and $L'$ instead lies along the ray of source $M(w_0)=\rad P(y)$ (along $L$). Again let $w_0=\gamma_1\ldots\gamma_r\delta^{-1}_1\ldots\delta^{-1}_t$. Note that $w_0$ ends on a peak, so let $w_1=(w_0)_{-c}=\gamma_1\ldots\gamma_{r-1}$. Now let $d$ be the greatest integer such that $s(\gamma_d)$ is non-exceptional and let $w_k=\gamma_1\ldots\gamma_{d-1}$ if $d>1$ or let $w_k=\varepsilon_{s(\gamma_1)}$ if $d=1$. Then by Lemma~\ref{MaxExceptional}(b), there exists a ray in $_s\Gamma_A$ of the form 
		\begin{equation*}
			\cdots \rightarrow M(w_0) \rightarrow M(w_1) \rightarrow \cdots \rightarrow M(w_{k-1}) \rightarrow M(w_k).
		\end{equation*}
		
		In the case where $w_k=\varepsilon_x$, we note that $x$ and $y$ share a common vertex $v$. Moreover, it follows from the maximality of the string $\gamma_1\ldots\gamma_r$ that $x^v$ and $y^v$ are the only half-edges incident to $v$ such that their corresponding edges are non-exceptional, and that $\mathfrak{e}_{v} = 1$ if $x^{v} \neq y^{v}$ or $\mathfrak{e}_{v} = 2$ if $x^{v} = y^{v}$. Thus in this special case, $x$ and $y$ belong to a path of length 2 that is either of the form $p$ in the theorem statement, or is either a path along a loop $x$ in $G$ with incident vertex $v$ such that $\mathfrak{e}_{v} = 1$ and $x$ is the only non-exceptional edge of $G$.
		
		Otherwise, we note that $e(w_k)$ is non-exceptional and is associated to the first step along a non-exceptional clockwise Green walk from $\widehat{e}(\gamma_r)$. The proof for the case of the intersection point of $L$ and $L'$ belonging to the ray of source $M(w_0)$ is then similar to the proof of the of the case where the intersection point belongs to the ray of target $M(w_0)$. We will summarise the argument. We first construct a path $q$ of the form (\ref{yPath}) above by performing a clockwise non-exceptional Green walk from $\widehat{e}(\gamma_r)$, which we label by $y_m^{u_{m-1}}$. We then use Lemma~\ref{PathWalk}(a) iteratively to produce a string $w_l$ such that $e(w_l)=x$ and $M(w_l)$ is at the intersection of $L$ and $L'$. It follows that $w_l$ is of the form.
		\begin{equation*}
			w_l=\gamma_1\ldots\gamma_{d-1} w^-_{m-2}w^+_{m-3}\ldots w^-_2 w^+_1,
		\end{equation*}
		where $w^-_i$ is the inverse string of shortest length between $y_{i+1}^{u_i}$ and $y_i^{u_i}$ and $w^+_i$ is the direct string of greatest length between $y_{i+1}^{u_i}$ and $y_i^{u_i}$. Since the last symbol of $w_l$ is an arrow, $w_l$ lies in the ray (along $L'$) of source $M(w'_0)$. Let $x_1=x$ and label the $(i-1)$-th step along a non-exceptional clockwise Green walk from $x_1^{v_1}$ by $x_i^{v_i}$. The $i$-th use of Lemma~\ref{PathWalk}(a) on $w'_0$ produces a string
		\begin{equation*}
			w'^+_{2i}w'^-_{2i-1}\ldots w'^+_2 w'^-_1,
		\end{equation*}
		where $w'^-_j$ is the inverse string of shortest length between $x_{j+1}^{v_j}$ and $x_j^{v_j}$ and $w'^+_j$ is the direct string of greatest length between $x_{j+1}^{v_j}$ and $x_j^{v_j}$. So $w'^-_j=w^+_j$ and $w'^+_j=w^-_j$. By similar arguments as presented earlier in the proof, this implies that $q$ is a path of the form $p$ in the theorem statement, or $A$ is 1-domestic.
	\end{proof}

\end{document}